%
%
%
%

\documentclass{svmult}

\usepackage{calc,amscd,amsfonts,epsfig,psfrag,amsmath,amssymb,enumerate}
\usepackage{srcltx}
\usepackage{makeidx}         
\usepackage{graphicx}        
\usepackage{multicol}        
\usepackage[bottom]{footmisc}


\newtheorem{Theorem}{Theorem}[section] 
\newtheorem{Lemma}[Theorem]{Lemma} 
\newtheorem{Proposition}[Theorem]{Proposition} 
\newtheorem{Corollary}[Theorem]{Corollary} 


\def\ninf#1{ \| #1 \|_\infty }


\makeindex             


\begin{document}

\title*{Facilitated
  spin models: recent and new results}
\author{N. Cancrini\inst{1}\and F. Martinelli\inst{2}\and
  C. Roberto$^{\dag,}$\inst{3}\and C. Toninelli$^{\dag,}$\inst{4}
     }
\institute{Dip. Matematica Univ.l'Aquila, 1-67100 L'Aquila, Italy
\texttt{nicoletta.cancrini@roma1.infn.it}
\and Dip.Matematica, Univ. Roma Tre, Largo S.L.Murialdo 00146,
Roma,Italy \texttt{martin@mat.uniroma3.it}\and Universite Paris-est,
L.A.M.A. UMR 8050, 5 bd Descartes, 77454 Marne-la-Vall\'ee
France \texttt{cyril.roberto@univ-mlv.fr}\and Laboratoire de Probabilit\'es et Mod\`eles Al\`eatoires
  CNRS-UMR 7599 Universit\'es Paris VI-VII 4, Place Jussieu F-75252
  Paris Cedex 05 France \texttt{ctoninel@ccr.jussieu.fr}}
%
%

\maketitle

\footnotetext[1]{The material presented here is an
    expanded version of a series of lectures delivered by F.Martinelli at the 
    Prague summer school 2006 on \emph{Mathematical Statistical Mechanics}.
}\footnotetext[2]{$^\dag$ This work was partially
supported by gdre CNRS-INdAM 224, GREFIMEFI.}
\begin{abstract}  Facilitated or kinetically constrained spin models (KCSM) are a class of
  interacting particle systems reversible w.r.t. to a simple product
  measure. Each dynamical variable (spin) is re-sampled from
  its equilibrium distribution only if the surrounding
  configuration fulfills a simple local constraint which \emph{does not
  involve} the chosen variable itself. Such simple models are
  quite popular in the glass community since they 
  display some of the peculiar features of glassy
  dynamics, in particular they can undergo a dynamical arrest
  reminiscent of the liquid/glass transitiom. Due to the fact
  that the jumps rates of the Markov process can be zero, the whole
  analysis of the long time behavior becomes quite delicate and, until
  recently, KCSM have escaped a rigorous analysis with the
  notable exception of the East model. In these notes we will mainly
  review several recent mathematical results which, besides being
  applicable to a wide class of KCSM, have contributed to
  settle some debated questions arising in numerical simulations made by
  physicists. We will also provide some interesting new extensions. In
  particular we will show how to deal with interacting models reversible
  w.r.t. to a high temperature Gibbs measure and we will provide a
  detailed analysis of the so called one spin facilitated model on a
  general connected graph. \bigskip\noindent

{\bf Key words}: Glauber dynamics, spectral gap, kinetically constrained models,
dynamical phase transition, glass transition.
\end{abstract}

\section{Introduction and motivations}
Consider the following simple interacting particle system. At each site
of the lattice ${\mathbb Z}$ there is a dynamical variable $\sigma_x$, called in
the sequel ``spin'', taking values in $\{0,1\}$. With rate one each spin
attempts to change its current value by tossing a coin which lands head
with probability $p\in (0,1)$ and setting the new value to $1$ if head
and $0$ if tail. However the whole operation is performed only if the
current value on its right neighbor is $0$. Such a model is known under
the name of the East model \cite{JE} and it is easily checked to be
reversible w.r.t. the
product Bernoulli(p) measure. A
characteristic feature of the East model is that, when $q:=1-p\approx 0$, the relaxation to the reversible measure is
extremely slow \cite{noi}:
\begin{equation*}
  T_{\rm relax}\approx \left(1/q\right)^{\frac 12 \log_2(1/q)}
\end{equation*}
where $T_{\rm relax}$ is the inverse spectral gap in the spectrum of the (self-adjoint)
generator ${\mathcal L}$ of the process. Notice that if one writes
$p=\frac{e^{\beta}}{1+e^{\beta}}$ then $T_{\rm relax}\approx e^{c\beta^2}$
as $\beta\to \infty$,
a behavior that is refered to as a \emph{super-Arrhenius law} in the  physics literature. 

The East model is one of the simplest examples of a general class of
interacting particles models which are known in physical literature as 
{\sl facilitated} or {\sl kinetically constrained spin models} (KCSM). 

The common feature to all KCSM is that each dynamical variable, one for
each vertex of a connected graph ${\mathcal G}$ and with values
in a finite set $S$, waits an exponential time of 
mean one and then, if the
surrounding current configuration satisfies a simple local constraint,
is refreshed by sampling a new value from $S$ 
according to some apriori specified measure $\nu$. These models
have been introduced in the physical literature
\cite{FA1,FA2} to model the liquid/glass transition and more generally
the slow ``glassy'' dynamics which occurs in different systems (see
\cite{Ritort,TB} for recent review). In particular, they 
were devised to mimic the fact that the motion of a molecule in a dense
liquid can be inhibited by the presence of too many surrounding
molecules. That explains why, in all physical models, $S=\{0,1\}$ 
(empty or occupied site) and the
constraints specify the maximal number of particles (occupied sites)
on certain sites 
around a given one in order to allow creation/destruction on the
latter. As a consequence, the dynamics becomes increasingly
slow as the density of particles, $p$, is increased.  
Moreover there usually exist {\sl blocked
configurations}, namely configurations with all creation/destruction
rates identically equal to zero. This implies the existence of
several invariant measures (see \cite{Lalley} for a somewhat
detailed discussion of this issue in the context of the North-East
model), the occurrence of unusually long mixing times compared to
standard high-temperature stochastic Ising models and may induce the presence of ergodicity
breaking transitions without any counterpart at the level of the
reversible measure \cite{Cristina-leshouches}. 

Because of the presence of the constraints a mathematical analysis of
these models have been missing for a long time, with the notable
exception of the East model \cite{Aldous}, until a first recent
breakthrough \cite{noi,noi-JSTAT}. 

In this work we partly review
the results and the techniques of \cite{noi} but we also extend them in two
directions. Firstly we show that the main technique can be
adapted to deal with a weak
interaction among the variables obtained by replacing the reversible product measure with a general
high-temperature Gibbs measure. Secondly, motivated by some unpublished
considerations of D. Aldous \cite{Aldous2},  we analyze a special model, the so called
\emph{FA-1f} model, on a general connected graph and relate its
relaxation time to that of the East model.

\section{The models}
\label{general models}
\subsection{Setting and notation}
\label{setting}
The models considered here are defined on a locally finite, bounded
degree, connected graph
${\mathcal G}=(V,E)$ with vertex set  $V$ and edge set $E$. The associated graph distance
will be denoted by $d(\cdot,\cdot)$ and the degree of a vertex $x$ by
$\D_x$. The set of neighbors of $x$, {\it i.e. \ } $y\in V$ such that $d(y,x)=1$,
will be denoted by ${\mathcal N}_x$. For every subset $V'\subset V$ we
denote by $\partial V'$ the set of vertices in $V\setminus V'$ with one
neighbor in $V'$. In most cases the graph $G$ will either be
the $d$-dimensional lattice ${\mathbb Z}^d$ or a finite portion of it and in
both cases we need some additional notation that we fix now.
For any vertex $x\in {\mathbb Z}^d$ we define the $*$, thw oriented and the 
$*$-oriented
neighborhood of $x$ as
$$
\begin{array}{l}
{\mathcal N}^*_x=\{y\in {\mathbb Z}^d:\ y=x+\sum_{i=1}^d \alpha_i\vec e_i,\ \alpha_i=\pm 1,0
\ {\mbox {and}} \sum_i\alpha_i^2\neq 0\}\cr
  {\mathcal K}_x=\{y\in {\mathcal N}_x:\ y=x+\sum_{i=1}^d \alpha_i\vec e_i,\ \alpha_i\ge 0\}\cr
  {\mathcal K}^*_x=\{y\in {\mathcal N}^*_x:\ y=x+\sum_{i=1}^d \alpha_i\vec e_i,\ \alpha_i=1,0\}
\end{array}
$$
where $\vec e_i$ are the basis vactors of ${\mathbb Z}^d$.
Accordingly, the oriented and *-oriented neighborhoods $\partial_+ \Lambda,\ \partial_+^* \Lambda$ of a finite subset $\Lambda\subset
{\mathbb Z}^d$ are defined as
$
\partial_+\Lambda:=\left\{\cup_{x\in \Lambda}{\mathcal K}_x\right\}\setminus \Lambda,\
\partial_+^* \Lambda:=\left\{\cup_{x\in \Lambda}{\mathcal K}^*_x\right\}\setminus \Lambda.
$
A rectangle $R$ will be a set of sites of the form
$$
R:=[a_1,b_1]\times\dots\times [a_d,b_d]
$$
while the collection of finite subsets of ${\mathbb Z}^d$ will be denoted by
${\mathbb F}$.

\subsection{The probability space}
Let $\left(S,\nu\right)$ be a finite probability
space with $\nu(s)>0$ for any $s\in S$.  $G\subset S$
will denote a distinguished event in $S$, often referred to as the set
of ``good states'', and $q\equiv \nu(G)$ its probability.
 
Given $\left(S,\nu\right)$ we will consider the configuration space
$\Omega\equiv \Omega_V=S^{V}$ whose elements will be
denoted by Greek letters ($\omega,\eta\dots)$. If ${\mathcal G}'=(V',E')$ is a subgraph
of ${\mathcal G}$  and $\omega\in \Omega_V$ we will write $\omega_{V'}$ for its restriction
to $V'$. We will also say that a vertex $x$ is good for the
configuration $\omega$ if $\omega_x\in G$. 

On $\Omega$ equipped with the natural $\sigma$-algebra we will consider the
product measure $\mu:=\prod_{x\in V}\nu_x$, $\nu_x\equiv \nu$. If
${\mathcal G}'=(V',E')$ is a subgraph of ${\mathcal G}$ we will write $\mu_{V'}$ or
$\mu_{{\mathcal G}'}$ for the
restriction of $\mu$ to $\Omega_{V'}$. Finally,  for any $f\in
L^1(\mu)$, we will use the
shorthand notation $\mu(f)$ to denote its expected value and $\mathop{\rm Var}\nolimits(f)$ for its variance (when it exists).

\subsection{The Markov process}
\label{Markov preocess}
The general interacting particle models that will be studied here are
Glauber type Markov processes in $\Omega$, reversible w.r.t. the measure
$\mu$ and characterized by a finite
collection of \emph{influence classes} $\{{\mathcal C}_x\}_{x\in V}$, where ${\mathcal C}_x$ is just a
collection of subsets of $V$ (often of the neighbors of the vertex $x$) satisfying
the following general hypothesis:
\medskip\noindent
\begin{description}
\item[\bf Hp1] For all $x\in V$ and all $A\in
  {\mathcal C}_x$ the vertex $x$ does not belong to $A$.
\smallskip\noindent
\item[\bf Hp2] $r:=\sup_x\sup_{A\in {\mathcal C}_x}d(x,A)< +\infty$.
\end{description}
\medskip\noindent
In turn the influence classes together with the good event $G$ are the
key ingredients to define the constraints of each model.

\begin{definition}
\label{constraints}
Given a vertex $x\in V$ and a configuration $\omega$, we will say that the
constraint at $x$ is satisfied by $\omega$ if the indicator
\begin{equation}
\label{therates}
  c_{x}(\omega)=
  \begin{cases}
 1 & \text{if there exists a set $A\in
{\mathcal C}_x$ such that $\omega_y\in G$ for all $y\in A$}  \cr
0 & \text{otherwise}   
  \end{cases}
\end{equation}
is equal to one.
\end{definition}
\begin{remark}
The two general hypotheses above  tell us that in order to
check whether the constraint is satisfied at a given vertex we do not
need to check the current state of the \emph{vertex itself} and we only need to
check \emph{locally} around the vertex. This last requirement can actually be
weakened and indeed, in order to analyze certain spin exchange
kinetically constrained models \cite{noiconservativi}, a very efficient tool is to consider
\emph{long range} constraints !  
\end{remark}
The process that will be studied in the sequel can then be informally described
as follows. Each vertex $x$ waits an independent mean one exponential
time and then, provided that the current configuration $\omega$ satisfies
the constraint at $x$, the value $\omega_x$ is refreshed with a
new value in $S$ sampled from $\nu$ and the whole procedure starts again.

The generator ${\mathcal L}$ of the process can be constructed in a standard way
(see e.g. \cite{Liggett,Lalley}) and it is a non-positive self-adjoint
operator on $L^2(\Omega,\mu)$ with domain $Dom({\mathcal L})$ and Dirichlet form given by
$$
{\mathcal D}(f)=\sum_{x\in V}\mu\left(c_{x} \mathop{\rm Var}\nolimits_x(f)\right),\quad f\in Dom({\mathcal L})
$$
Here $\mathop{\rm Var}\nolimits_{x}(f)\equiv\int d\nu(\omega_x) f^2(\omega)
- \left(\int d\nu(\omega_x)f(\omega)\right)^2$ denotes the local variance with respect to
the variable $\omega_x$ computed while the other variables are held fixed.
To the generator ${\mathcal L}$ we can associate the Markov semigroup
$P_t:=e^{t{\mathcal L}}$ with reversible invariant measure $\mu$.

Notice that the constraints $c_x(\omega)$ are increasing functions w.r.t the
partial order in $\Omega$ for which $\omega\le\omega'$ iff $\omega'_x\in G$ whenever
$\omega_x\in G$.
However that does not imply in general that the process generated by
${\mathcal L}$ is attractive in the sense of Liggett \cite{Liggett}.

Due to the fact that in general the jump rates are not bounded away
from zero, irreducibility of the process is not guaranteed and the
reversible measure $\mu$ is usually not the only invariant measure
(typically there exist initial configurations that are blocked
forever). An
interesting question when ${\mathcal G}$ is infinite is therefore whether $\mu$ is ergodic/mixing for
the Markov process and whether there exist other ergodic stationary
measures. To this purpose it is useful to recall the following well
known result (see e.g. Theorem 4.13 in \cite{Liggett}).
\begin{Theorem}
\label{ergodic}
  The following are equivalent,
  \begin{enumerate}[(a)]
  \item $\lim_{t\to \infty} P_tf=\mu(f)$ in $L^2(\mu)$ for all
    $f\in L^2(\mu)$.
\item $0$ is a simple eigenvalue for ${\mathcal L}$.
  \end{enumerate}
\end{Theorem}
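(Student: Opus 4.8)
The plan is to prove the equivalence of (a) and (b) by exploiting the self-adjointness of $\mathcal{L}$ on $L^2(\mu)$ together with the spectral theorem. The key structural fact is that $\mathcal{L}$ is a non-positive self-adjoint operator, so there is a projection-valued spectral measure $\{E_\lambda\}_{\lambda\le 0}$ with $\mathcal{L}=\int_{-\infty}^0 \lambda\, dE_\lambda$ and $P_t=e^{t\mathcal{L}}=\int_{-\infty}^0 e^{t\lambda}\, dE_\lambda$. Writing $P_0$ for the orthogonal projection onto $\ker\mathcal{L}$ (the eigenspace at $\lambda=0$), one has for any $f\in L^2(\mu)$
\[
\|P_t f - P_0 f\|_2^2 = \int_{(-\infty,0)} e^{2t\lambda}\, d\langle E_\lambda f, f\rangle,
\]
and by dominated convergence for the spectral measure this tends to $0$ as $t\to\infty$, since $e^{2t\lambda}\to 0$ pointwise on $(-\infty,0)$ and is bounded by $1$. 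Thus $P_t f \to P_0 f$ in $L^2(\mu)$ for every $f$, with no assumption on the spectrum.

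Next I would identify the limit. Since the constant function $\mathbf 1$ satisfies $\mathcal{L}\mathbf 1 = 0$ (each term $c_x \mathop{\rm Var}\nolimits_x(\mathbf 1)=0$), the constants always lie in $\ker\mathcal{L}$, and $\mu(f)=\langle f,\mathbf 1\rangle_{L^2(\mu)}$ is the component of $f$ along $\mathbf 1$. Hence $P_0 f = \mu(f)$ for all $f$ if and only if $\ker\mathcal{L}$ is exactly the one-dimensional space spanned by $\mathbf 1$, i.e. if and only if $0$ is a simple eigenvalue. This gives (b)$\implies$(a): if $0$ is simple then $P_0 f=\mu(f)$ and the first paragraph yields $\lim_{t}P_t f=\mu(f)$ in $L^2(\mu)$.

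For the converse (a)$\implies$(b), I would argue by contraposition: suppose $0$ is not simple, so there exists $g\in\ker\mathcal{L}$ with $g$ not constant; after subtracting its mean we may take $\mu(g)=0$ but $g\neq 0$. Then $P_t g = g$ for all $t$ (since $\mathcal{L}g=0$ implies $\frac{d}{dt}P_t g = \mathcal{L}P_t g$ and $P_t g = g$ solves this with the right initial datum, or directly from the spectral representation $P_t g = \int e^{t\lambda}dE_\lambda g = g$ because $g\in\mathrm{Ran}\,E_{\{0\}}$). Therefore $\lim_{t\to\infty}P_t g = g \neq 0 = \mu(g)$ in $L^2(\mu)$, contradicting (a). Hence (a) forces $0$ to be simple.

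The only genuinely delicate point is the justification that $P_t f\to P_0 f$ for \emph{every} $f\in L^2(\mu)$, which requires the spectral calculus for the unbounded self-adjoint generator rather than an elementary eigenfunction expansion; once the spectral theorem is invoked this is routine dominated convergence against the finite measure $d\langle E_\lambda f,f\rangle$. Everything else — that constants are harmonic, that $\mu(f)$ is the projection onto constants, and that a nonconstant harmonic function obstructs convergence — is immediate. I would present the argument in this order: (i) spectral representation and convergence $P_tf\to P_0f$; (ii) identification $P_0f=\mu(f)\iff 0$ simple, giving (b)$\Rightarrow$(a); (iii) contrapositive for (a)$\Rightarrow$(b).
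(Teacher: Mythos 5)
The paper does not actually prove this statement; it simply points to Theorem~4.13 in Liggett's book. Your argument supplies the standard spectral-theoretic proof that underlies that citation, and it is correct: strong convergence $P_t \to P_0$ (the orthogonal projection onto $\ker\mathcal{L}$) is dominated convergence against the finite spectral measure $d\langle E_\lambda f,f\rangle$, and requires no gap assumption; constants lie in $\ker\mathcal{L}$ so $\mu(f)\mathbf{1}$ is always the projection of $f$ onto $\mathrm{span}\{\mathbf{1}\}$; hence $P_0 f=\mu(f)$ for all $f$ if and only if $\ker\mathcal{L}=\mathrm{span}\{\mathbf{1}\}$, i.e.\ if and only if $0$ is simple. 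Your contrapositive for (a)$\Rightarrow$(b) --- take $g\in\ker\mathcal{L}$ nonconstant, normalize to $\mu(g)=0$, observe $P_t g=g\neq 0$ --- is exactly right. Nothing is missing; this is the same route Liggett takes.
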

Clearly $(a)$ implies that $\lim_{t\to
  \infty}\mu\left(fP_tg\right)=\mu(f)\mu(g)$ for any $f,g\in L^2(\mu)$,
{\it i.e. \ } $\mu$ is mixing. 
\begin{remark}
  Even if $\mu$ is mixing there will exist in general infinitely many
  stationary measures, {\it i.e. \ } probability measures $\tilde \mu$ satisfying
  $\tilde\mu P_t=\tilde\mu$ for all $t\ge 0$. As an example, assume
  $c_x$ not identically equal to one and take an arbitrary probability
  measure $\tilde \mu$ such that $\tilde\mu\bigl(\{S\setminus
  G\}^{V}\bigr)=1$.  An interesting problem  is therefore to classify
  all the stationary ergodic measures $\tilde \mu$ of $\{P_t\}_{t\ge 0}$, where
  ergodicity means that $P_tf=f$ ($\tilde\mu$ a.e.) for all $t\ge 0$ implies that $f$ is
  constant ($\tilde \mu$ a.e.). As we will see later, when ${\mathcal G}={\mathbb Z}^2$ and for a
  specific choice of the constraint known as the North-East model, a
  rather detailed answer is now available \cite{Lalley}.
\end{remark}

When ${\mathcal G}$ is finite connected subgraph of an infinite graph
${\mathcal G}_\infty=(V_\infty,E_\infty)$, the ergodicity issue of the resulting
continuous time Markov chain can be attacked in two ways. 

The first one is to analyze the chain restricted to a suitably defined
ergodic component. Although such an approach is feasible and natural in
some cases (see section \ref{FA-1f general} for an example), the whole
analysis becomes quite cumbersome.  

Another possibility, which has several technical advantages over the
first one, is to unblock certain special vertices of ${\mathcal G}$ by relaxing
their constraints and restore irreducibility of the chain. A natural way to do
that is to imagine to extend the configuration $\omega$, apriori defined
only in $V$, to the vertices in $V_\infty\setminus V$ and to keep it
there frozen and equal to some reference configuration $\tau$ that will be
referred to as the \emph{boundary condition}. If enough vertices in
$V_\infty\setminus V$ are good for $\tau$, then enough vertices of ${\mathcal G}$
will become unblocked and the whole chain ergodic.

More precisely we can define the finite volume constraints with boundary
condition $\tau$ as
\begin{equation}
\label{theratesfinite}
c^\tau_{x,V}(\omega):= c_x(\omega\cdot\tau)
\end{equation}
where $c_x$ are the constraints for ${\mathcal G}_\infty$ defined in \eqref{therates} and $\omega\cdot
\tau\in\Omega$ denotes the configuration equal to $\omega$ inside $V$ and equal
to $\tau$ in $V_\infty\setminus V$.  Notice  that, for any $x\in V$, the
rate $c^\tau_{x,V}(\omega)$ (\ref{theratesfinite}) depends
on $\tau$ only through the indicators $\{{1 \mskip -5mu {\rm I}}_{\tau_z\in G}\}_{z\in {\mathcal B}}$,
where ${\mathcal B}$ is  the \emph{boundary set}
${\mathcal B}:=\left(V_\infty\setminus V\right) \cap \left(\cup_{z\in V}
  {{\mathcal C}_z}\right)$. Therefore,
instead of fixing $\tau$, it is enough to choose a subset ${\mathcal M}\subset
{\mathcal B}$, called the \emph{good boundary set}, and
define
\begin{equation}
\label{theratesfinite2}
c^{{\mathcal M}}_{x,V}(\omega):= c^\tau_{x,V}(\omega)
\end{equation}
where $\tau$ is \emph{any} configuration satisfying $\tau_z\in G$ for all
$z\in{\mathcal M}$ and $\tau_z\notin G$ for $z\in {\mathcal B}\setminus{\mathcal M}$.  We will say
that a choice of ${\mathcal M}$ is \emph{minimal} if the corresponding chain in
${\mathcal G}$ with the rates
(\ref{theratesfinite2}) is irreducible  and it is non-irreducible for any other choice
${\mathcal M} '\subset {\mathcal M}$. The choice ${\mathcal M}={\mathcal B}$ will be called
\emph{maximal}. For convenience we will write ${\mathcal L}_\Lambda^{\rm max}$
(${\mathcal L}_\Lambda^{\rm min}$) for the corresponding generators.
\begin{remark}
Without any other specification for the influence classes of the model
it may very well be the case that there exists no boundary conditions
for which the chain is irreducible and/or their existence may depend on
the choice of the finite subgraph ${\mathcal G}$.  However, as we will see later, for all the
interesting models discussed in the literature all these issues will have
a rather simple solution.   
\end{remark}
We will now describe some of the basic models and solve the problem of
boundary conditions for each one of them.

\subsection{0-1 Kinetically constrained spin models}
\label{models}
In most models considered in the physical literature the finite
probability space $(S,\nu)$ is a simple $\{0,1\}$ Bernoulli space and
the good set $G$ is conventionally chosen as the empty (vacant) state
$\{0\}$. Any model with these features will be called in the sequel a
``0-1 KCSM'' (kinetically constrained spin model). Although in most
cases the underlying graph ${\mathcal G}$ is a regular lattice like ${\mathbb Z}^d$,
whenever is possible we will try to work in full generality.

Given a 0-1 KCSM, the parameter $q=\nu(0)$ can be varied in $[0,1]$
while keeping fixed the basic structure of the model ({\it i.e. \ } the notion of
the good set and the constraints $c_x$) and
it is natural to define a critical value $q_c$ as
$$
q_c=\inf\{q\in[0,1]:\, 0 \text{ is a simple eigenvalue of ${\mathcal L}$}\}
$$  
As we will prove below $q_c$ coincides with the \emph{bootstrap
  percolation threshold} $q_{bp}$
of the model defined as follows \cite{Schonmann} \footnote{In most of
  the bootstrap percolation literature the role of the $0$'s and the
  $1$'s is inverted}. For any $\eta\in \Omega$
define the bootstrap map $T:\Omega\mapsto \Omega$ as  
\begin{equation}
  \label{eq:bootstrap map}
  (T\eta)_x=0 \quad \text{if either}\quad 
\eta_x=0 \quad \text{or}\quad c_x(\eta)=1.
\end{equation}
Denote by $\mu^{(n)}$ the probability
measure on $\Omega$ obtained by iterating $n$-times the above mapping
starting from $\mu$. As $n\to \infty$ $\mu^{(n)}$ converges to
a limiting measure $\mu^{(\infty)}$ \cite{Schonmann} and it is natural
to define the critical value $q_{bp}$ as 
$$
q_{bp}=\inf\{q\in [0,1]:\, \mu^{\infty}=\delta_0\}
$$ 
where $\delta_0$ is the probability measure assigning unit mass to the
constant configuration identically equal to zero. In other words
$q_{bp}$ is  the infimum
of the values $q$ such that, with probability one, the graph ${\mathcal G}$ can be
entirely emptied. 
Using the fact that the $c_x$'s are increasing function of $\eta$ it is
easy to check that  $\mu^{(\infty)}=\delta_0$ for any $q>q_{bp}$ .
\begin{Proposition}[\cite{noi}]
\label{qc=qbp} $q_c=q_{bp}$ and for any $q>q_c$
  $0$ is a simple eigenvalue for ${\mathcal L}$.
\end{Proposition}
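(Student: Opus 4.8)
The plan is to prove the two statements $q_c = q_{bp}$ and "for $q > q_c$, $0$ is simple" simultaneously, by showing the chain of implications: for a product Bernoulli parameter $q$, if $q > q_{bp}$ then $0$ is a simple eigenvalue of ${\mathcal L}$, and conversely if $q < q_{bp}$ then $0$ is not simple. Together with the definition of $q_c$ as an infimum, this pins down $q_c = q_{bp}$ and gives the last claim.

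For the first (main) implication, fix $q > q_{bp}$. By Theorem \ref{ergodic} it suffices to show $\lim_{t\to\infty} P_t f = \mu(f)$ in $L^2(\mu)$ for all $f \in L^2(\mu)$; equivalently, that the only functions in the domain of ${\mathcal L}$ with ${\mathcal D}(f) = 0$ are the constants. So suppose ${\mathcal D}(f) = \sum_x \mu\bigl(c_x \mathop{\rm Var}\nolimits_x(f)\bigr) = 0$. Then for every $x$, $\mathop{\rm Var}\nolimits_x(f)(\omega) = 0$ for $\mu$-a.e.\ $\omega$ with $c_x(\omega) = 1$; that is, $f$ does not depend on $\omega_x$ whenever the constraint at $x$ is satisfied. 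Now I would run the bootstrap dynamics: since $q > q_{bp}$, with $\mu$-probability one the configuration $\eta$ gets entirely emptied under iteration of the map $T$. Because $\mu^{(\infty)} = \delta_0$, for $\mu$-a.e.\ $\omega$ there is a (random) finite ordering of a subset of vertices — the sites emptied at successive bootstrap stages — along which, step by step, the constraint at the next vertex is already satisfied by the partially-emptied configuration. One uses this ordering to argue that $f(\omega)$ equals $f$ evaluated on the fully-emptied configuration: at each step the relation "$f$ independent of $\omega_x$ when $c_x = 1$" lets one replace $\omega_x$ by $0$ without changing the value of $f$, using that $c_x$ is increasing so that emptying earlier sites only helps later constraints. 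Hence $f$ is $\mu$-a.e.\ equal to a constant (its value at the all-$0$ configuration), which is what we needed. Some care is required to make the "random ordering" argument measurable and to handle the infinite-volume case — e.g.\ by first working in finite volume $\Lambda$ with maximal boundary condition, showing the finite-volume generator ${\mathcal L}_\Lambda^{\rm max}$ has a spectral gap or at least a simple $0$, and then taking $\Lambda \uparrow V$ — but the heart of it is the bootstrap emptying.

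For the converse, suppose $q < q_{bp}$. Then $\mu^{(\infty)} \neq \delta_0$, so with positive $\mu$-probability the configuration $\eta$ is \emph{not} entirely emptied by the bootstrap map; equivalently, the event $\mathcal{E}$ that there exists a nonempty ``frozen'' set — a set of sites all in $S\setminus G$ that can never be emptied because no constraint inside it is ever satisfied — has positive probability. The indicator ${1 \mskip -5mu {\rm I}}_{\mathcal{E}}$ (or more precisely the indicator of the corresponding event defined through the fixed points of $T$) is a non-constant element of $L^2(\mu)$ that is invariant under $P_t$: on the set where a vertex is frozen, its constraint is never satisfied along the dynamics, so the dynamics cannot cross between $\mathcal{E}$ and its complement. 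Hence $0$ is not a simple eigenvalue. This direction is the easier one; the only mild subtlety is checking that $\mathcal{E}$ is genuinely dynamically invariant, which follows from the fact that bootstrap-frozen sites remain frozen under the stochastic dynamics since the constraints are the same indicators $c_x$ and are increasing.

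The main obstacle is the first implication — specifically, turning the almost-sure bootstrap emptying into a rigorous argument that every ${\mathcal D}$-null $f$ is constant. Iterating the one-site replacements along the bootstrap cascade is intuitively clear but needs to be organized so that the exceptional ($\mu$-null) sets do not accumulate badly over the (a.s.\ finite in any finite region, but countably many overall) replacement steps; the clean way is the finite-volume reduction described above, where in each fixed $\Lambda$ only finitely many steps occur, followed by a limiting argument using that $q > q_{bp}$ makes the finite-volume emptying probabilities tend to $1$.
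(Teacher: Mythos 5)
The easy ``converse'' direction is where your argument actually breaks. You take $\mathcal{E}$ to be the event that the configuration is \emph{not} entirely emptied by the bootstrap map (equivalently, that the set $B(\omega)$ of sites never emptied is nonempty), note that it has positive probability for $q<q_{bp}$, and claim ${1 \mskip -5mu {\rm I}}_{\mathcal E}$ is a non-constant invariant function. But on ${\mathbb Z}^d$ with translation-invariant constraints and product $\mu$, the event $\{B(\omega)\neq\emptyset\}$ is shift-invariant, so by ergodicity of the product measure it has probability $0$ or $1$; for $q<q_{bp}$ its probability is therefore $1$, and ${1 \mskip -5mu {\rm I}}_{\mathcal E}$ is $\mu$-a.s.\ \emph{constant}. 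This is why the definition of $q_{bp}$ uses ``with probability one'': there is no intermediate regime. What you actually want is the \emph{local} event $\{0\in B(\omega)\}$. Its probability is at most $p<1$ (frozen sites are occupied) and, for $q<q_{bp}$, at least positive by translation invariance (if $\mu(0\in B)=0$ then $\mu(x\in B)=0$ for all $x$ and hence $\mu(B\neq\emptyset)=0$, contradiction). Moreover you also only check half of invariance --- ``frozen stays frozen.'' To conclude that ${1 \mskip -5mu {\rm I}}_{\{0\in B\}}$ is invariant you must also show that a configuration with $0\notin B$ cannot be moved into $\{0\in B\}$ by a legal flip. That is true, but it is not automatic from the statement you give; it uses the fact that if $c_x(\omega)=1$ then $B(\omega)=B(\omega^x)$, which follows by sandwiching: letting $\omega^e,\omega^o$ denote the two configurations differing at $x$, one has $\omega^o\le\omega^e$ giving $B(\omega^e)\subseteq B(\omega^o)$, while $T\omega^o\ge\omega^e$ (since $c_x(\omega^o)=1$ forces $T$ to empty $x$) gives the reverse inclusion after iterating $T$.

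On the main implication your plan --- characterize ${\mathcal D}(f)=0$ functions by running the bootstrap cascade, handled via finite volume with maximal boundary condition and then taking $\Lambda\uparrow V$ --- is the right structure and matches the paper's route, and you are candid that the measure-theoretic bookkeeping is deferred. I would only caution that the finite-volume step is where the content lies (in particular, irreducibility of ${\mathcal L}^{\rm max}_\Lambda$ is not automatic for an arbitrary constraint satisfying only Hp1--Hp2; it holds for the concrete models but must be argued, typically via internal spanning), so ``the heart of it is the bootstrap emptying'' is correct but should not be read as making that step routine.
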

\begin{remark}
In \cite{noi} the proposition has been proved in the special case
${\mathcal G}={\mathbb Z}^d$ but actually the same arguments apply to any bounded degree
connected graph.
\end{remark}
Having defined the bootstrap percolation it is natural to divide the 0-1
KCSM into two distinct classes.
\begin{definition}
\label{noncoop}
We will say that a 0-1 KCSM is \emph{non cooperative} if there exists a
finite set ${\mathcal B}\subset V$ such that any configuration $\eta$ which is empty in all the
sites of ${\mathcal B}$ reaches the empty configuration (all 0's) under
iteration of the bootstrap mapping. Otherwise the model will be called \emph{cooperative}.   
\end{definition}
\begin{remark}
Notice that for a non-cooperative model the critical value $q_c$ is obviously
zero since with $\mu$-probability one a configuration will contain the
required finite set ${\mathcal B}$ of zeros.  
\end{remark}
We will now illustrate some of the most studied models.

\hfill\break
{\bf [1] Frederickson-Andersen (FA-jf) facilitated models
  \cite{FA1,FA2}.} In the facilitated models the constraint at $x$ requires
that at least $j\le \D_x$ neighbors are vacant. More formally  
$$
{\mathcal C}_x=\{A\subset {\mathcal N}_x: |A|\ge j\}
$$
When $j=1$ the model is non-cooperative for any connected graph ${\mathcal G}$ and
ergodicity of the Markov chain is clearly guaranteed by the presence of
at least one unblocked vertex. When $j>1$ ergodicity on a general graph
is more delicate and we restrict ourselves to finite rectangles $R$ in
${\mathbb Z}^d$.  In that case and for the most constrained cooperative  case
$j=d$ among the irreducible ones, irreducibility is guaranteed if we assume a
boundary configuration identically empty on $\partial_+R$. Quite remarkably, using results
from bootstrap percolation \cite{Schonmann} combined with proposition
\ref{qc=qbp}, when ${\mathcal G}={\mathbb Z}^d$ and $2\le j\le d$ the ergodicity
threshold $q_c$ always vanishes. 

\hfill\break
{\bf [2] Spiral model \cite{spiral1,spiral2}} This model is defined on ${\mathbb Z}^2$
with the following choice for the influence classes
$$
{\mathcal C}_x=\{NE_x\cup SE_x;~SE_x\cup SW_x;~SW_x\cup NW_x;~NW_x\cup NE_x\}
$$
where $NE_x=(x+\vec e_2,x+\vec e_1+\vec e_2)$, $SE_x=(x+\vec e_1,x+\vec
e_1-\vec e_2)$, $SW_x=(x-\vec e_2,x-\vec e_2-\vec e_1)$ and
$NW_x=(x-\vec e_1;x-\vec e_1+\vec e_2)$. In other words the vertex $x$ can
flip iff either its North-East ($NE_x$) or its South-West
($SW_x$) neighbours (or both of them) are empty {\sl and} either its
North-West ($NW_x$) or its South-East ($SE_x$) neighbours (or both of
them) are empty too.  The model is clearly cooperative and in
\cite{spiral2} it has been proven that its critical point $q_c$
coincides with $1-p_c^o$, where $p_c^o$ is the critical treshold for
oriented percolation. The interest of this model lies on the fact that
its bootstrap percolation is expected to display a peculiar mixed
discontinuous/critical character which makes it relevant as a model
for the liquid glass and more general jamming transitions
\cite{spiral1,spiral2}.

\hfill\break
{\bf [3] Oriented models.}
Oriented models are similar to the facilitated models but the neighbors of a given vertex $x$ that must be vacant
in order for $x$ to become free to flip, are chosen according to some
orientation of the graph. Instead of trying to describe a very general
setting we present three important examples. 

\begin{example}
The first and best known example is the so called East model \cite{JE}. Here ${\mathcal G}={\mathbb Z}$ and for every $x\in {\mathbb Z}$
the influence class
${\mathcal C}_x$ consists of the vertex $x+1$. In other words any vertex can flip iff its right neighbor
is empty. The minimal boundary conditions in a finite interval which
ensure irreducibility of the chain are of
course empty right boundary, {\it i.e. \ } the rightmost vertex is always
unconstrained. The model is clearly cooperative but $q_c=0$ since in
order to empty ${\mathbb Z}$ it is enough to start from a configuration for
which any site $x$ has some empty vertex to its right. One could easily
generalize the model to the case when ${\mathcal G}$ is a rooted tree (see
section \ref{FA-1f general}). In that
case any vertex different from the root can be updated iff its ancestor
is empty. The root itself is unconstrained.
\end{example}

\begin{example}
The second example is the North-East model in ${\mathbb Z}^2$ \cite{RJM}.
Here one chooses ${\mathcal C}_x$ as the North and East neighbor of $x$. 
The model is clearly cooperative and its critical point $q_c$ coincides with $1-p^o_c$,
where $p_c^o$ is the critical threshold for oriented percolation in
${\mathbb Z}^2$ \cite{Schonmann}. For such a model much more can be said about
the stationary ergodic measures of the Markov semigroup $P_t$. 
\begin{Theorem}[\cite{Lalley}] If $q<q_c$ the trivial measure $\delta_1$ that
  assigns unit mass to the configuration identically equal to $1$ is the
  only translation invariant, ergodic, stationary measure for the
  system. If $q\ge q_c$ the reversible measure $\mu$ is the unique, non
  trivial, ergodic, translation invariant, stationary measure. 
\end{Theorem}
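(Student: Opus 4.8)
\medskip
\noindent I take ``ergodic'' in the sense of the Remark above: a stationary $\tilde\mu$ is ergodic iff $P_tf=f$ ($\tilde\mu$-a.e.) for all $t\ge0$ forces $f$ to be $\tilde\mu$-a.e.\ constant. With this reading the two clauses merge into the single claim that the only translation invariant, stationary, ergodic measures are $\delta_1$ (which always qualifies, being a point mass on the blocked all-$1$ configuration) and $\mu$ (non-trivial for $q>0$ and, by Proposition~\ref{qc=qbp} and Theorem~\ref{ergodic}, ergodic precisely when $q\ge q_c$). The dynamical notion is the relevant one: the translation invariant, translation-ergodic measure carried by the two ``alternating column'' configurations (all $0$ on even columns and all $1$ on odd columns, or vice versa) is stationary but frozen, hence not ergodic in this sense. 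So the plan is to fix a translation invariant, stationary, ergodic $\tilde\mu$ and show it is $\delta_1$ or $\mu$, the latter forcing $q\ge q_c$.

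\medskip
\noindent\emph{Step 1 (the frozen skeleton is a dynamical invariant).} For $x\in{\mathbb Z}^2$ I would introduce the event $A_x$ that some infinite oriented North-East path $x=x_0,x_1,x_2,\dots$ (with $x_{i+1}\in\{x_i+\vec e_1,x_i+\vec e_2\}$) has $\omega_{x_i}=1$ for all $i$. The unique influence set of ${\mathcal C}_{x_i}$ is $\{x_i+\vec e_1,x_i+\vec e_2\}$, and it contains $x_{i+1}$, which carries a $1$; hence $c_{x_i}(\omega)=0$ and no vertex of such a path is ever updated. Thus $A_x$, once true, stays true, i.e.\ $P_t\mathbf{1}_{A_x}\ge\mathbf{1}_{A_x}$ pointwise; stationarity then gives $\tilde\mu(\mathbf{1}_{A_x})=\tilde\mu(P_t\mathbf{1}_{A_x})\ge\tilde\mu(\mathbf{1}_{A_x})$, so $P_t\mathbf{1}_{A_x}=\mathbf{1}_{A_x}$ $\tilde\mu$-a.e.\ for all $t$, and $\mathbf{1}_{A_x}$ is a bounded $P_t$-invariant function. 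Ergodicity makes it $\tilde\mu$-a.e.\ constant, and translation invariance makes the constant independent of $x$, so $\tilde\mu(A_0)\in\{0,1\}$. If $\tilde\mu(A_0)=1$, then a.s.\ every vertex lies on an infinite NE-path of $1$'s, so $\omega_x=1$ for all $x$ ($\tilde\mu$-a.s.) and $\tilde\mu=\delta_1$.

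\medskip
\noindent\emph{Step 2 (the case $\tilde\mu(A_0)=0$).} A compactness (König's lemma) argument gives, for the North-East constraints, $(T^\infty\omega)_x=1$ iff $A_x$ holds, $T$ being the bootstrap map \eqref{eq:bootstrap map}; hence $\tilde\mu(A_0)=0$ and translation invariance imply $\tilde\mu(A_x)=0$ for all $x$, i.e.\ $\tilde\mu$ is carried by configurations with $T^\infty\omega\equiv0$, and applying the Step~1 argument to the event $\bigcap_xA_x^c=\{T^\infty\omega\equiv0\}$ --- which is destroyed but never created along trajectories --- further shows that from $\tilde\mu$-a.e.\ $\omega$ the process never leaves $\{T^\infty\omega\equiv0\}$. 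I would then split on $q$. If $q<q_c$ this should be contradictory: since $q_c=1-p^o_c$, the density $p=1-q$ is supercritical for oriented site percolation, and comparing the ``refresh to $1$'' events of the dynamics with a supercritical oriented percolation should show that from any configuration, with positive probability the dynamics eventually creates an infinite oriented NE-path of $1$'s (which then persists by Step~1), contradicting the invariance just proved; so $q<q_c$ leaves only $\tilde\mu=\delta_1$. If $q\ge q_c$, then $\mu$ itself has $T^\infty\omega\equiv0$ a.s.\ (for $q>q_c$ this is $\mu^{(\infty)}=\delta_0$, recalled above, together with $q_{bp}=q_c$; the borderline uses the absence of an infinite oriented cluster at $p^o_c$), and on the $\mu$-full set of such configurations the dynamics is irreducible enough --- any finite box can be emptied and then refilled at will --- that coupling the $\tilde\mu$-process with the stationary $\mu$-process, using the same clocks and refresh variables and forcing both to follow the bootstrap emptying of larger and larger boxes, should make the two agree on every finite box, i.e.\ $\tilde\mu=\mu$. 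Combining the two steps proves the theorem.

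\medskip
\noindent The hard part will be Step~2, where the static identity $q_c=q_{bp}$ (Proposition~\ref{qc=qbp}) must be upgraded to a genuinely dynamical comparison with oriented percolation. For $q<q_c$ one needs a lower bound, \emph{uniform} over the configurations supporting $\tilde\mu$, on the probability that the refreshing dynamics builds a frozen infinite NE-path of $1$'s; for $q\ge q_c$ one must control the a priori unbounded (the constraint points North-East) region that has to be emptied to manipulate a prescribed finite box, and make the coupling work even though the North-East model is not attractive, so that the usual monotone coupling is unavailable. Everything else --- the two explicit examples, the invariant-function argument of Step~1, and the reductions --- is soft.
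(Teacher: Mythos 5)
The paper cites this theorem from \cite{Lalley} without supplying a proof, so there is no in-paper argument for you to match; I can only assess your sketch on its own terms. Your Step 1 is correct and captures the structural heart of the result: for the North-East model an infinite oriented NE-chain of $1$'s is absorbing, so $\mathbf{1}_{A_x}$ is a $P_t$-invariant function under any stationary $\tilde\mu$, and dynamical ergodicity plus translation invariance force $\tilde\mu(A_0)\in\{0,1\}$, the $1$ case giving $\tilde\mu=\delta_1$. The interpretive point that ``ergodic'' must mean dynamical (spectral) rather than translation-ergodicity, illustrated by the alternating-column measure which is translation-ergodic, stationary, yet frozen and thus disallowed, is also well taken. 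The identity $(T^\infty\omega)_x=1\iff A_x$ via K\"onig's lemma is correct for these constraints.

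Step 2 is where the proof actually lives, and both halves as sketched contain genuine gaps rather than merely deferred computations. For $q<q_c$ you want to show that from a configuration with $T^\infty\omega\equiv0$ the dynamics spontaneously builds an infinite frozen NE-chain with positive probability; but the refreshes you would like to compare with a supercritical oriented percolation are constrained: a site refreshes to $1$ only when \emph{both} its N and E neighbors are simultaneously $0$, so these events are neither unconditional nor spatially independent, and the comparison is not a bare domination. One must exhibit a space--time region in which enough constrained refreshes actually fire, and verify that the $1$'s so deposited link into an infinite chain before any of them can be erased; otherwise the ``seed'' configuration gets swept away before it freezes. For $q\ge q_c$ the phrase ``forcing both to follow the bootstrap emptying of larger and larger boxes'' does not describe a legitimate coupling of a stochastic dynamics: the graphical construction has no way of tracking a deterministic bootstrap schedule. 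More seriously, since the North-East model is not attractive, graphically coupled copies can \emph{create} discrepancies (a disagreement at $x$ makes the constraints at $x-\vec e_1$ and at $x-\vec e_2$ disagree in the two copies), so coalescence requires a substantive argument that the discrepancy process is sub-critical and dies out --- a non-monotone analogue of a contact-process extinction proof, not a reformulation. Finally, the borderline case $q=q_c$ needs a separate argument for the ergodicity of $\mu$, since Proposition~\ref{qc=qbp} as stated gives simplicity of the zero eigenvalue only for $q>q_c$; you mention the absence of an infinite oriented cluster at $p_c^o$ but do not connect it to ergodicity of $\mu$. These are not ``hard details'' to fill in: they are the theorem.
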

\end{example}

\begin{example}
The third model was suggested in \cite{Aldous} and it is defined on a rooted
(finite or infinite)
binary tree ${\mathcal T}$. Here a vertex $x$ can flip iff its two children are
vacant. If the tree is finite then ergodicity requires that all the
leaves of ${\mathcal T}$ are unconstrained. It is easy to check that the critical
threshold satisfies $q_c=1/2$, the site percolation threshold on the binary tree.  
\end{example}

\section{Quantities of interest and related problems}
\label{main questions}
Back to the  general model we now define two main quantities that are of
mathematical and physical interest.

The first one is the spectral
gap of the generator ${\mathcal L}$, defined as 
\begin{equation}
  \label{eq:gap}
\mathop{\rm gap}\nolimits({\mathcal L}):=\inf_{f\neq \text{const}}\frac{{\mathcal D}(f)}{\mathop{\rm Var}\nolimits(f)}
\end{equation}
A positive spectral gap implies that the reversible measure $\mu$ is
mixing for the semigroup $P_t$ with exponentially decaying correlations:
$$
\mathop{\rm Var}\nolimits\left(P_t f\right)\leq e^{-2t\mathop{\rm gap}\nolimits({\mathcal L})}\mathop{\rm Var}\nolimits(f),\qquad \forall \, f\in
L^2(\mu).
$$
\begin{remark}
  In the sequel the time scale $T_{\rm rel}:=\mathop{\rm gap}\nolimits^{-1}$ which is naturally fixed by
  the spectral gap will be refered to as
  the \emph{relaxation time} of the process. 
\end{remark}
For a $0$-$1$ KCSM, two natural questions arise.
\begin{enumerate}
\item Define the new critical point 
$
q'_c:=\inf\{q\in [0,1]:\ \mathop{\rm gap}\nolimits({\mathcal L})>0\}.
$
Obviously $q'_c\ge q_c$. Is it the case that equality holds ? 
\item If $q'_c=q_c$ what is the behaviour of $\mathop{\rm gap}\nolimits({\mathcal L})$ as $q\downarrow q_c$ ? 
\end{enumerate}
As we will see later for most of the relevant models it is possible to
answer in rather detailed way to both questions.

The second quantity of interest is the so called \emph{persistence
  function} (see e.g. \cite{Ha,SE}) defined by 
\begin{equation}
  \label{eq:Pers}
F(t):=\int d\mu(\eta)\; {\mathbb P}(\sigma^\eta_0(s)=\eta_0,\; \forall s\le t)
\end{equation}
where $\{\sigma^\eta_s\}_{s\ge 0}$ denotes the process started from the
configuration $\eta$. In some sense the persistence function, a more
accessible quantity to numerical simulation than the spectral gap, provides a
measure of the ``mobility'' of the system. Here the main questions are:
\begin{enumerate}
\item What is the behavior of $F(t)$ for large time
  scales ?
\item For a $0$-$1$ KCSM is it the case that $F(t)$ decays exponentially
  fast as $t\to \infty$ for any $q>q'_c$ ? 
\item If the answer to the previous question is positive, is the decay rate
  related to the spectral gap in a simple way or the decay rate of
  $F(t)$ requires a deeper knowledge of the spectral density of ${\mathcal L}$ ?  
\item Is it possible to exhibit examples of $0$-$1$ KCSM in which the
  persistence function shows a crossover between a stretched and a pure exponential
  decay ?  
\end{enumerate}
Unfortunately the above questions are still mostly unanswered except for
the first two.   
\subsection{Some useful observations to bound the spectral gap}
It is important to observe the following kind of monotonicity that can be
exploited in order to bound the spectral gap of one model with the
spectral gap of another one.
\begin{definition}
Suppose that we are given two
influence classes ${\mathcal C}_0$ and ${\mathcal C}'_0$, denote by
$c_x(\omega)$ and $c'_x(\omega)$ the corresponding rates and by ${\mathcal L}$ and ${\mathcal L}'$
the associated generators on $L^2(\mu)$. If,  for all $\omega\in \Omega$ and all
$x\in V$,
$c'_x(\omega)\le c_x(\omega)$, we say that ${\mathcal{L}}$ is dominated by ${\mathcal{L'}}$.
\end{definition}
\begin{remark}
The term domination here has the same meaning it has in the context of
bootstrap percolation. It means that the KCSM associated to ${\mathcal L}'$ is
\emph{more} constrained than the one associated to ${\mathcal L}$.
\end{remark}
Clearly, if ${\mathcal L}$ is dominated by ${\mathcal L}'$, ${\mathcal D}'(f)\leq {\mathcal D}(f)$ and
therefore $\mathop{\rm gap}\nolimits({\mathcal L}')\le \mathop{\rm gap}\nolimits({\mathcal L})$.
\begin{example}
Assume that the graph ${\mathcal G}$ has $n$ vertices and contains a Hamilton
path $\Gamma=\{x_1,x_2,\dots,x_n\}$, {\it i.e. \ } $d(x_{i+1},x_i)=1$ for all
$1\le i\le n-1$ and $x_i\neq x_j$ for all $i\neq j$. Consider the FA-1f model
on ${\mathcal G}$ with one special vertex, e.g. $x_n$, unconstrained ($c_{x_n}\equiv 1$). Then,
if we replace ${\mathcal G}$ by $\Gamma$ equipped with its natural graph structure and we
denote by ${\mathcal L}$ and  ${\mathcal L}'$ the respective generators,
we get that $\mathop{\rm gap}\nolimits( {\mathcal L})\ge \mathop{\rm gap}\nolimits({\mathcal L}')$. Clearly ${\mathcal L}'$ describes the FA-1f model on
the finite interval $[1,\dots,n]\subset {\mathbb Z}$ with the last vertex free to
flip. This in turn is dominated by ${\mathcal L}_{\rm East}$, the
generator of the East model on $[1,\dots,n]$, which is
 known to have
a positive \cite{Aldous,noi} spectral gap uniformly in $n$. Therefore the latter result holds also for $\mathop{\rm gap}\nolimits({\mathcal L}')$ and $\mathop{\rm gap}\nolimits({\mathcal L})$ .
\end{example}
\begin{example}
Along the lines of the previous example we
could lower bound the spectral gap of the FA-2f model in ${\mathbb Z}^d$, $d\ge
2$, with that in ${\mathbb Z}^2$, by restricting the sets $A\in {\mathcal C}_0$ to
e.g. the $(\vec e_1, \vec e_2)$-plane.  
\end{example}  
For a last and more detailed example of the comparison technique we refer the reader
to section \ref{FA-1f general}. 

Although the comparison technique can be quite effective in proving positivity
of the spectral gap, one should keep in mind that, in general, it provides quite poor bounds,
particularly in the limiting case $q\downarrow q_c$.  

The second observation we make consists in relating $\mathop{\rm gap}\nolimits({\mathcal L})$ when the
underlying graph is infinite to its finite graph analogue. Fix $r\in V$
and let ${\mathcal G}_{n,r}\subset {\mathcal G}$ be the connected ball centered at $r$ of
radius $n$. Suppose that $\inf_{n}\mathop{\rm gap}\nolimits({\mathcal L}^{\rm max}_{{\mathcal G}_{n,r}})>0$. It
is then easy to conclude that $\mathop{\rm gap}\nolimits({\mathcal L})>0$.

Indeed, following Liggett Ch.4 \cite{Liggett}, for any $f\in Dom({\mathcal L})$
with $\mathop{\rm Var}\nolimits(f)>0$ pick $f_n\in L^2(\Omega,\mu)$ depending only on finitely
many spins so that $f_n \to f$ and ${\mathcal L} f_n \to {\mathcal L} f$ in $L^2$. Then
$\mathop{\rm Var}\nolimits(f_n)\to \mathop{\rm Var}\nolimits(f)$ and ${\mathcal D}(f_n)\to {\mathcal D}(f)$. But since $f_n$ depends
on finitely many spins
$$
\mathop{\rm Var}\nolimits(f_n)=\mathop{\rm Var}\nolimits_{{\mathcal G}_{m,r}}(f_n)\quad \text{and}\quad {\mathcal D}(f_n)={\mathcal D}_{{\mathcal G}_{m,r}}(f_n)
$$ 
provided that $m$
is a large enough square (depending on $f_n$). Therefore 
$$
\frac{{\mathcal D}(f)}{\mathop{\rm Var}\nolimits(f)}\ge \inf_{n}\mathop{\rm gap}\nolimits({\mathcal L}_{{\mathcal G}_{n,r}})>0.
$$  
and $\mathop{\rm gap}\nolimits({\mathcal L})\ge \inf_{n}\mathop{\rm gap}\nolimits({\mathcal L}_{{\mathcal G}_{n,r}})>0$.

\section{Main results for 0-1 KCSM on regular lattices}
In this section we state some of the main results for a general 0-1 KCSM
on ${\mathbb Z}^d$  which have been obtained in \cite{noi}.

Fix an integer length scale $\ell$ larger than the range of the constraints and let
${\mathbb Z}^d(\ell)\equiv \ell\,{\mathbb Z}^d$. Consider a
partition of ${\mathbb Z}^d$ into disjoint rectangles $\Lambda_z:=\Lambda_0+z$, $z\in
{\mathbb Z}^d(\ell)$, where $\Lambda_0=\{x\in{\mathbb Z}^d:\ 0\le x_i\le \ell-1,\; i=1,..,d\}$.
\begin{definition}
\label{defgelle}
Given $\epsilon\in (0,1)$ we say that $G_{\ell}\subset \{0,1\}^{\Lambda_0}$ is
a $\epsilon$-good set of configurations on scale $\ell$ if the following
two conditions are satisfied:
\begin{enumerate}[(a)]
\item $\mu(G_{\ell})\ge 1-\epsilon$.
\item For any collection $\{\xi^{(x)}\}_{x\in {\mathcal K}_0^*}$ of spin
  configurations such that $\xi^{(x)}\in G_\ell$ for all $x\in
  {\mathcal K}_0^*\,$, the following holds.  For any $\xi\in \Omega$ which coincides
  with $\xi^{(x)}$ in $\cup_{x\in {\mathcal K}_0^*}\Lambda_{\ell x}$, there exists a
  sequence of legal moves inside $\cup_{x\in {\mathcal K}_0^*}\Lambda_{\ell x}$ ({\it i.e. \ }
  single spin moves compatible with the constraints) which transforms
  $\xi$ into a new configuration $\tau\in\Omega$ such that the Markov chain in
  $\Lambda_0$ with boundary conditions $\tau$ is ergodic.
\end{enumerate}
\end{definition}
\begin{remark} 
\label{perspiral}
In general the transformed configuration $\tau$ will be
  identically equal to zero on $\partial_+^* \Lambda_0$. It is also clear
  that assumption (b) has been made having in mind models
  like the East, the FA-jf or the N-E which, modulo rotations, are dominated by
  a model with influence class  $\tilde {\mathcal C}_x$ \emph{entirely
  contained} in the sector $\{y: y=x+\sum_{i=1}^d \alpha_i \vec e_i,\ \alpha_i\ge 0\}$.
If this is not the case one should instead
use a non rectangular geometry for the tiles
of the partition of ${\mathbb Z}^d$, adapted
to the choice of the influence classes.
For example for the Spiral Model the basic tile at lenght scale $\ell$
is a quadrangular region ${\cal{R}}_{0}$ with one side parallel to $\vec
e_1$ and two sides parallel to $\vec e_1+\vec e_2$,
${\cal{R}}_{0}:=\cup_{1}^{\ell} S_0+(i-1)(\vec e_1+\vec e_2)$ with
$S_0:=\{x\in{\mathbb Z}^2: 0\leq x_1\leq \ell-1, x_2=0\}$. In this case
condition (b) should also be modified by substituting 
everywhere $\partial_+^* {\Lambda}_0$ with $\widetilde \partial_+^* {\Lambda}_0:=\vec e_1,\vec e_1-\vec e_2,\vec -e_2$. 
\end{remark}
With the above notation the first main result of \cite{noi} can be formulated as follows.
\begin{Theorem}
There exists a universal constant $\epsilon_0\in (0,1)$ such that, if
there exists $\ell$ and a $\epsilon_0$-good set $G_\ell$ on scale $\ell$,
then $\inf_{\Lambda\in {\mathbb F}}\mathop{\rm gap}\nolimits({\mathcal L}^{\rm max}_\Lambda)>0$. In particular $\mathop{\rm gap}\nolimits({\mathcal L})>0$.
\label{main theorem 01}
\end{Theorem}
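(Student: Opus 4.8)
The plan is to combine a renormalization/block argument with the comparison principle of Section~3. The first step is the reduction to finite volume: by the argument immediately preceding this section (the truncation of $f$ to finitely many spins, plus $\mathop{\rm gap}\nolimits({\mathcal L})\ge \inf_n \mathop{\rm gap}\nolimits({\mathcal L}^{\rm max}_{{\mathcal G}_{n,r}})$), it suffices to show that $\mathop{\rm gap}\nolimits({\mathcal L}^{\rm max}_\Lambda)$ is bounded below uniformly over $\Lambda\in{\mathbb F}$. So from now on $\Lambda$ is a fixed finite rectangle, tiled by the blocks $\{\Lambda_z\}_{z\in{\mathbb Z}^d(\ell)}$.

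The core of the argument is a block (renormalized) dynamics. I would introduce, on the coarse lattice ${\mathbb Z}^d(\ell)$, an auxiliary \emph{auxiliary East-like chain} whose elementary move updates the entire block $\Lambda_{\ell z}$ — resampling $\omega_{\Lambda_{\ell z}}$ from $\mu_{\Lambda_0}$ — provided that each of the "preceding" blocks indexed by ${\mathcal K}_0^*$ (shifted to $z$) is currently in the good set $G_\ell$. Condition (a) in Definition~\ref{defgelle} says a block is good with probability $\ge 1-\epsilon_0$, so this coarse chain is exactly an oriented/East-type KCSM on ${\mathbb Z}^d(\ell)$ with vacancy parameter $q_{\rm ren}=\mu(G_\ell)\ge 1-\epsilon_0$ close to $1$; for $\epsilon_0$ small enough its spectral gap is bounded below by a universal constant (this is where the East-model estimate of \cite{Aldous,noi}, together with domination, enters). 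The Dirichlet form of this block chain controls $\mathop{\rm Var}\nolimits(f)$: writing $\mathop{\rm Var}\nolimits(f)$ via a telescoping of conditional variances over the blocks in an East-type order and using the coarse gap, one gets
\begin{equation*}
\mathop{\rm Var}\nolimits_\Lambda(f)\le C\sum_{z} \mu\bigl(\mathbf 1_{\{\text{preceding blocks good}\}}\,\mathop{\rm Var}\nolimits_{\Lambda_{\ell z}}(f)\bigr).
\end{equation*}

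The remaining and most delicate step is to bound each block term $\mu(\mathbf 1_{\{\cdots\}}\mathop{\rm Var}\nolimits_{\Lambda_{\ell z}}(f))$ by a constant times a piece of the \emph{original} Dirichlet form ${\mathcal D}_\Lambda(f)=\sum_{x}\mu(c_x\mathop{\rm Var}\nolimits_x(f))$. This is precisely what condition (b) is designed for: on the event that the neighbouring blocks are good, there is a sequence of legal single-spin moves that empties the relevant boundary $\partial_+^*\Lambda_0$ and makes the chain inside $\Lambda_0$ irreducible, so inside this good environment the \emph{constrained} single-site Dirichlet form in the block is comparable to the \emph{unconstrained} one, $\mathop{\rm Var}\nolimits_{\Lambda_{\ell z}}(f)\le C\sum_{x\in\Lambda_{\ell z}}\mu(\mathop{\rm Var}\nolimits_x(f)\mid \text{good env})$ — and those constraints can in turn be "paid for" by the legal-move sequence, each move corresponding to a term $c_x\mathop{\rm Var}\nolimits_x$ in ${\mathcal D}_\Lambda$. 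Technically one runs a path/canonical-paths or a two-block "bisection"-type estimate: the cost of the comparison is the length of the unblocking move sequence and the number of configurations routed through a bottleneck, both finite and depending only on $\ell$, hence a finite ($\ell$-dependent but $\Lambda$-independent) constant. Summing over $z$ and using bounded overlap of the neighbourhoods then yields ${\mathcal D}_\Lambda(f)\ge c(\ell)\,\mathop{\rm Var}\nolimits_\Lambda(f)$ uniformly in $\Lambda$, which is the claim; the infinite-volume statement $\mathop{\rm gap}\nolimits({\mathcal L})>0$ follows from the reduction in the first step.

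The main obstacle I anticipate is making the last comparison quantitative while keeping the constant independent of $\Lambda$: one must show that the "unblock, resample, re-block" procedure only ever touches spins in $\cup_{x\in{\mathcal K}_0^*}\Lambda_{\ell x}$ (so that block terms for well-separated $z$ are genuinely independent pieces of ${\mathcal D}_\Lambda$), control the blow-up from conditioning on the good environment (the event has probability $\ge(1-\epsilon_0)^{|{\mathcal K}_0^*|}$, bounded below), and absorb the combinatorial cost of the path argument into $c(\ell)$. Choosing $\epsilon_0$ small enough that the renormalized East gap beats all these accumulated constants is exactly the role of the "universal constant $\epsilon_0$" in the statement.
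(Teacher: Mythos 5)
Your proposal correctly captures the overall architecture of the paper's proof: tile the lattice with $\ell$-blocks, use the good event $G_\ell$ to define a renormalized ``block'' chain on $\ell\,{\mathbb Z}^d$ whose single legal move is a full resampling of one block, use condition (a) to make the renormalized vacancy density $q_{\rm ren}=\mu(G_\ell)\ge 1-\epsilon_0$ large, use condition (b) to reconstruct each block move by an $\ell$-bounded sequence of single-spin legal moves of the original chain, and transfer the Poincar\'e inequality by canonical-path comparison. All of that matches the paper.

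However there is a genuine gap in the step you gloss over. The renormalized chain is what the paper calls the \emph{$*$-general model}: its constraint $c^{\rm ren}_x(\eta)=1$ requires $\eta_y\in G$ for \emph{all} $y\in{\mathcal K}^*_x$. In $d\ge 2$ that is a North-East--type constraint (strengthened to the full $*$-oriented star), not an East-type one. You propose to lower bound its spectral gap by ``the East-model estimate \ldots together with domination,'' but domination runs the wrong way here: since the $*$-general constraint is \emph{more} restrictive than any single East chain, domination by East gives only an \emph{upper} bound on $\mathop{\rm gap}\nolimits$ for the coarse chain, never a lower one. Likewise, ``telescoping of conditional variances over the blocks in an East-type order'' works in $d=1$ (where $*$-general $=$ East) but there is no total order of ${\mathbb Z}^d$, $d\ge 2$, compatible with the ${\mathcal K}^*$-constraint, so this decomposition does not produce the required Poincar\'e inequality. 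The paper instead proves positivity of the $*$-general gap for $q_{\rm ren}$ close to $1$ via the \emph{Bisection-Constrained} approach — a recursive block-halving scheme in which the auxiliary two-block dynamics is itself constrained, combined with a two-dimensional percolation argument (existence of a top--bottom crossing of empties in an overlap strip, see Lemma~\ref{geom} and the proof of Theorem~\ref{interacting NE}). That bisection/percolation recursion is precisely the new technical content of Theorem~\ref{main theorem 01}; your proposal in effect assumes it. Everything after that point in your sketch — the role of condition (b), the $\ell$-dependent but $\Lambda$-independent path constant, and the choice of $\epsilon_0$ small enough to beat the accumulated constants — is on target.
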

In several examples, e.g. the FA-jf models, the natural candidate for the event
  $G_\ell$ is  the event that the tile $\Lambda_0$ is ``internally
    spanned'', a notion borrowed from bootstrap percolation
  \cite{Aizenman,Schonmann,Cerf,Holroyd,Cerf2}:
  \begin{definition}
\label{ISDEF}
We say that a finite set $\Gamma\subset{\mathbb Z}^d$ is \emph{internally spanned}
by a configuration $\eta\in\Omega$ if, starting from
the configuration $\eta^\Gamma$ equal to one outside $\Gamma$ and equal to $\eta$
inside $\Gamma$,  there exists a sequence of legal moves
  inside $\Gamma$ which connects
  $\eta^\Gamma$ to the configuration identically equal to zero inside $\Gamma$ and
  identically equal to one outside $\Gamma$.
  \end{definition}
  Of course whether or not the set $\Lambda_0$ is internally spanned for $\eta$
  depends only on the restriction of $\eta$ to $\Lambda_0$. One of the major
  results in bootstrap percolation problems has been the exact
  evaluation of the $\mu$-probability that the box $\Lambda_0$ is internally
  spanned as a function of the length scale $\ell$ and the parameter $q$
  \cite{Holroyd, Schonmann, Cerf,Cerf2, Aizenman}.  For non-cooperative
  models it is obvious that for any $q>0$ such probability tends very
  rapidly (exponentially fast) to one as $\ell\to \infty$, since the
  existence of at least one completely empty finite set ${\mathcal B}+x\subset
  \Lambda_0$ (see definition \ref{noncoop}),
  allows to empty all $\Lambda_0$.
  For some cooperative systems like e.g. the FA-2f in ${\mathbb Z}^2$, it has been shown that for any $q>0$ such
  probability tends very rapidly (exponentially fast) to one as $\ell\to
  \infty$ and that it abruptly jumps from being very small to being
  close to one as $\ell$ crosses a critical scale $\ell_c(q)$. In most
  cases the critical length $\ell_c(q)$ diverges very rapidly as
  $q\downarrow 0$. Therefore, for such models and $\ell>\ell_c(q)$, one
  could safely take $G_\ell$ as the collection of configurations $\eta$
  such that $\Lambda_0$ is internally spanned for $\eta$. We now formalize what
  we just said.
  \begin{Corollary}
\label{IS}
  Assume that $\lim_{\ell\to \infty}\mu(\Lambda_0 \text{
    is internally spanned })=1$ and that the Markov chain in $\Lambda_0$ with
  zero boundary conditions on $\cup_{x\in {\mathcal K}_0^*}\Lambda_{\ell x}$ is ergodic. Then $\mathop{\rm gap}\nolimits({\mathcal L})>0$.
  \end{Corollary}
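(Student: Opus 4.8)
The plan is to deduce the Corollary from Theorem~\ref{main theorem 01} by taking as the $\epsilon_0$-good set the event that the tile $\Lambda_0$ is internally spanned. Concretely, set
\[
G_\ell:=\{\eta_{\Lambda_0}:\ \Lambda_0\ \text{is internally spanned by }\eta\},
\]
which depends only on $\eta_{\Lambda_0}$ and is hence a well-defined subset of $\{0,1\}^{\Lambda_0}$. It then suffices to verify that, for a suitable $\ell$, this $G_\ell$ satisfies conditions (a) and (b) of Definition~\ref{defgelle} with $\epsilon=\epsilon_0$, $\epsilon_0$ being the universal constant of Theorem~\ref{main theorem 01}; the conclusions $\inf_{\Lambda\in{\mathbb F}}\mathop{\rm gap}\nolimits({\mathcal L}^{\rm max}_\Lambda)>0$ and in particular $\mathop{\rm gap}\nolimits({\mathcal L})>0$ then follow at once.

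Condition (a) is precisely the first hypothesis: since $\mu(G_\ell)=\mu(\Lambda_0\text{ is internally spanned})\to1$ as $\ell\to\infty$, we fix an $\ell$ larger than the range of the constraints, large enough for the ergodicity hypothesis to apply, and with $\mu(G_\ell)\ge1-\epsilon_0$. For condition (b), take a family $\{\xi^{(x)}\}_{x\in{\mathcal K}_0^*}$ with each $\xi^{(x)}\in G_\ell$ and any $\xi\in\Omega$ agreeing with $\xi^{(x)}$ on $\Lambda_{\ell x}$. The idea is to empty the tiles $\Lambda_{\ell x}$, $x\in{\mathcal K}_0^*$, one at a time: for fixed such $x$, the fact that $\Lambda_{\ell x}$ is internally spanned by $\xi$ gives a sequence of legal moves \emph{inside} $\Lambda_{\ell x}$ driving $\xi_{\Lambda_{\ell x}}$ to the all-zero configuration \emph{when the exterior of $\Lambda_{\ell x}$ is all ones}; because the rates $c_z$ are increasing in the partial order on $\Omega$ and $\ell$ exceeds the range of the constraints, the same move sequence remains legal when the exterior of $\Lambda_{\ell x}$ carries the actual (more-zero) configuration, in particular after some other tiles $\Lambda_{\ell x'}$ have already been emptied. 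Performing these sequences in succession over all $x\in{\mathcal K}_0^*$ produces a configuration $\tau$, reached from $\xi$ by legal moves inside $\bigcup_{x\in{\mathcal K}_0^*}\Lambda_{\ell x}$, which is identically zero there. Since $\partial_+^*\Lambda_0\subset\bigcup_{x\in{\mathcal K}_0^*}\Lambda_{\ell x}$, $\tau$ is zero on the part of $\partial\Lambda_0$ the constraints in $\Lambda_0$ can see, so by the second hypothesis the chain in $\Lambda_0$ with boundary condition $\tau$ is ergodic, which is exactly what (b) requires.

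I expect the delicate point to be this gluing step in (b): one must be sure that replaying, inside $\Lambda_{\ell x}$, a move sequence that was legal with all-ones exterior stays legal once neighbouring tiles carry zeros. This holds because each move affects only a site $z\in\Lambda_{\ell x}$ and its legality depends, through $c_z$, solely on the (unchanged) configuration inside $\Lambda_{\ell x}$ near $z$ together with the good/bad indicators of the exterior sites within distance $r<\ell$; raising the exterior configuration in the partial order only increases $c_z$, so the tiles may indeed be emptied in any order. Two minor verifications accompany this: choosing $\ell$ simultaneously large enough for (a) and compatible with the stated ergodicity property, and checking the elementary geometric inclusion $\partial_+^*\Lambda_0\subset\bigcup_{x\in{\mathcal K}_0^*}\Lambda_{\ell x}$, which holds because the forward $*$-neighbours of the far corner of $\Lambda_0$ lie in the adjacent forward tiles indexed by ${\mathcal K}_0^*$ (cf. Remark~\ref{perspiral}). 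With (a) and (b) established, Theorem~\ref{main theorem 01} applies and yields $\inf_{\Lambda\in{\mathbb F}}\mathop{\rm gap}\nolimits({\mathcal L}^{\rm max}_\Lambda)>0$, hence $\mathop{\rm gap}\nolimits({\mathcal L})>0$.
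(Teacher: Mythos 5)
Your proof is correct and follows exactly the route the paper intends: take $G_\ell$ to be the set of internally-spanned configurations and invoke Theorem~\ref{main theorem 01}, with (a) coming from the limit hypothesis and (b) from the monotonicity of the $c_x$ (the all-ones exterior is the minimum of the partial order, so the emptying sequence for each tile $\Lambda_{\ell x}$ remains legal in the actual environment and the tiles can be emptied in any order). The geometric inclusion $\partial_+^*\Lambda_0\subset\bigcup_{x\in{\mathcal K}_0^*}\Lambda_{\ell x}$ and the need to pick $\ell$ large enough for both (a) and the ergodicity hypothesis are also checked as they should be.
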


We stress that for some models a notion of good event which differs
from requiring internal spanning is needed. This is the case for the N-E and Spiral models, as can be immediately seen by noticing that at any length scale it is possible to construct small clusters of particles in proper corners of the tiles that can never be erased by internal moves. The choice of the proper $\epsilon$-good set of confugurations for N-E has already been discussed in \cite{noi}. For the Spiral Model the  definition which naturally arises from the results in \cite{spiral1} is the following. Let  $\widetilde{\cal{R}}_{0}$ be the region obtained from ${\cal{R}}_{0}$ by subtracting two proper quadrangular regions  at the bottom left and top right corners, namely $\widetilde{\cal{R}}_{0}:={\cal{R}}_{0}\setminus ({\cal{R}}_{bl}\cup {\cal{R}}_{tr})$ where ${\cal{R}}_{bl}$ (${\cal{R}}_{tr}$) have the same shape of ${\cal{R}}_{0}$ shrinked at length scale $\ell/4$ and have the bottom left (top right) corner which coincides with the one of ${\cal{R}}_{0}$.
The $\epsilon$-good set of configurations on scale $\ell$,  $G_{\ell}$,
includes all configurations $\eta$ such that there 
exists a sequence of legal moves inside ${\cal{R}}_{0}$ which connects
$\eta^{{\cal{R}}_{0}}$ (the configuration which has all ones outside
${\cal{R}}_{0}$ and equals $\eta$ inside)
 to a configuration 
identically equal to zero inside $\widetilde{\cal{R}}_{0}$. Lemma 4.7 and Proposition 4.9 of \cite{spiral1} prove, respectively, property (a) and (b) of Definition \ref{defgelle} (with $\partial_+^* {\Lambda}_0$ substituted with $\vec e_1,\vec e_1-\vec e_2,\vec -e_2$, see remark \ref{perspiral}) when the density is below the critical density of oriented percolation. Thus, using this definition for the good event and Theorem \ref{main theorem 01} we conclude that 
\begin{Theorem}
$\mathop{\rm gap}({\cal {L}}_{\mbox{spiral}})>0$ at any $\rho< p_c^o$.
\end{Theorem}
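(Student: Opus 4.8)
The plan is to derive the statement from Theorem~\ref{main theorem 01}, used in the variant adapted to the Spiral geometry described in Remark~\ref{perspiral}: one tiles ${\mathbb Z}^2$ by the quadrangular regions ${\cal R}_z := {\cal R}_0 + z$ instead of by rectangles, and everywhere in Definition~\ref{defgelle} one replaces $\partial_+^*\Lambda_0$ by the three-site set $\widetilde\partial_+^*\,{\cal R}_0 := \{\vec e_1,\ \vec e_1-\vec e_2,\ -\vec e_2\}$. The proof of Theorem~\ref{main theorem 01} given in \cite{noi} carries over to this setting essentially verbatim, since it uses only that the influence classes of the model lie in the sector swept out by the modified boundary and that the tiles fit together along that sector, both of which hold for the Spiral model. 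Let $\epsilon_0\in(0,1)$ be the universal constant furnished by that theorem. Fix $\rho< p_c^o$, equivalently $q:=1-\rho>1-p_c^o=q_c$. It then suffices to exhibit a length scale $\ell$ and a set $G_\ell\subset\{0,1\}^{{\cal R}_0}$ which is $\epsilon_0$-good on scale $\ell$ in the (adapted) sense of Definition~\ref{defgelle}.

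We take for $G_\ell$ the event introduced above: the set of configurations $\eta$ for which some sequence of legal single-spin moves inside ${\cal R}_0$ transforms $\eta^{{\cal R}_0}$, the configuration equal to $\eta$ on ${\cal R}_0$ and to $1$ outside, into a configuration that is identically $0$ on the truncated tile $\widetilde{\cal R}_0={\cal R}_0\setminus({\cal R}_{bl}\cup{\cal R}_{tr})$. Property~(a), that is $\mu(G_\ell)\ge 1-\epsilon_0$ once $\ell$ is large, is exactly Lemma~4.7 of \cite{spiral1}: for $\rho$ below the oriented percolation threshold the occupied sites percolate, if at all, only along directions that the Spiral bootstrap map can circumvent, so with $\mu$-probability tending to $1$ as $\ell\to\infty$ the emptying front can be propagated through all of ${\cal R}_0$ except possibly the two shrunken corner blocks ${\cal R}_{bl},{\cal R}_{tr}$ at scale $\ell/4$; we pick $\ell=\ell_0$ so that this probability is at least $1-\epsilon_0$. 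Property~(b), that given such good configurations on the neighbouring tiles (as prescribed by the adapted version of Definition~\ref{defgelle}) and any configuration agreeing with them there, legal moves inside the union of these tiles empty enough of ${\cal R}_0$ to render the chain on ${\cal R}_0$ with the induced boundary condition irreducible, is Proposition~4.9 of \cite{spiral1}: once each neighbouring tile has been emptied on its own truncated region, the $0$'s thereby made available in the adjacent tiles allow the previously trapped corner clusters in ${\cal R}_{bl},{\cal R}_{tr}$ to be erased as well, so that one reaches a configuration which is $0$ on $\widetilde\partial_+^*\,{\cal R}_0$, and this is known to make the Spiral chain on ${\cal R}_0$ ergodic.

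Having verified (a) and (b) for the scale $\ell_0$, the adapted Theorem~\ref{main theorem 01} gives $\inf_{\Lambda}\mathop{\rm gap}\nolimits({\mathcal L}^{\rm max}_\Lambda)>0$ and hence $\mathop{\rm gap}\nolimits({\mathcal L}_{\rm spiral})>0$; since $\rho<p_c^o$ was arbitrary, this proves the theorem. The main obstacle is the non-monotone, trapped-corner feature peculiar to the Spiral model that is noted just before the statement: small clusters of occupied sites pinned in the corners of a tile can never be removed by moves internal to that tile, so one cannot take ``internally spanned'' as the good event and Corollary~\ref{IS} does not apply; the substantive work is to show both that emptying only the truncated region $\widetilde{\cal R}_0$ still occurs with high probability for $\rho<p_c^o$ (property (a)) and that it nonetheless suffices, after the tiles are glued together, to restore irreducibility of the finite-volume chain (property (b)). These are precisely the two facts supplied by the bootstrap-percolation analysis of \cite{spiral1}.
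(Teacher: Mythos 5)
Your proof is correct and follows exactly the route the paper takes: adapt Theorem~\ref{main theorem 01} to the quadrangular tiling of Remark~\ref{perspiral}, take $G_\ell$ to be the event that legal moves inside ${\cal R}_0$ empty the truncated region $\widetilde{\cal R}_0$, and invoke Lemma~4.7 and Proposition~4.9 of \cite{spiral1} for properties (a) and (b) respectively. Your added discussion of the trapped-corner obstruction and why Corollary~\ref{IS} is inapplicable is a faithful elaboration of the paper's own motivating remarks, not a different argument.
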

The second main result concerns the long time behavior of the
persistence function $F(t)$ defined in \eqref{eq:Pers}.

\begin{Theorem}
\label{persf}
Assume that $\mathop{\rm gap}\nolimits({\mathcal L})>0$. Then $F(t)\le
e^{-q \mathop{\rm gap}\nolimits t} + e^{-p\mathop{\rm gap}\nolimits t}$. 
\end{Theorem}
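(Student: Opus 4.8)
The plan is to fix the initial configuration $\eta$, reduce the persistence probability to the Laplace transform of an occupation time of an auxiliary process in which the spin at $0$ is frozen, and then bound that Laplace transform by the spectral gap through a Feynman--Kac argument together with a carefully chosen test function. \textbf{Reduction to a frozen process.} I would realize the dynamics by the usual graphical construction (independent rate-one Poisson clocks at every vertex, an independent $\nu$-distributed mark at each ring, and at a ring of $x$ the spin $\sigma_x$ is set to the mark provided $c_x$ is currently satisfied), and couple $\sigma^\eta$ with the process $\bar\sigma^\eta$ that uses the same clocks and marks at all vertices $y\neq 0$ but keeps $\sigma_0$ frozen at $\eta_0$ forever; equivalently, $\bar\sigma^\eta$ is the KCSM on $V\setminus\{0\}$ whose rates $c_x$ ($x\neq 0$) are evaluated with $\sigma_0$ hard-wired to $\eta_0$. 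At a ring of site $0$ with $c_0$ satisfied the spin $\sigma^\eta_0$ is left unchanged if the mark equals $\eta_0$ and is changed otherwise, so the two processes coincide up to the first ring at $0$ with $c_0$ satisfied and mark $\neq\eta_0$; call this time $\tau$, noting that $\{\sigma^\eta_0(s)=\eta_0,\ \forall s\le t\}=\{\tau>t\}$ and that $c_0(\sigma^\eta(s^-))=c_0(\bar\sigma^\eta(s^-))$ for $s\le\tau$. Since $\bar\sigma^\eta$ is independent of the clock and marks of site $0$, conditionally on $\bar\sigma^\eta$ the ``bad rings'' form a Poisson process of intensity $(1-\nu(\eta_0))\,c_0(\bar\sigma^\eta(s^-))\,ds$ (thinning/colouring of a Poisson process), whence
\[
{\mathbb P}\bigl(\sigma^\eta_0(s)=\eta_0,\ \forall s\le t\bigr)
={\mathbb E}\Bigl[\exp\Bigl(-(1-\nu(\eta_0))\int_0^t c_0(\bar\sigma^\eta(s))\,ds\Bigr)\Bigr].
\]

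\textbf{Feynman--Kac reformulation.} Averaging over $\eta$, writing $\mu=\nu\otimes\mu_{V\setminus\{0\}}$ and recalling that $c_0$ does not depend on $\sigma_0$ (Hp1), this gives
\[
F(t)=\sum_{a\in S}\nu(a)\,\bigl\langle\mathbf{1},\,e^{-tH_a}\mathbf{1}\bigr\rangle_{\mu_{V\setminus\{0\}}},
\qquad H_a:=-\bar{\mathcal L}_a+(1-\nu(a))\,c_0,
\]
where $\bar{\mathcal L}_a$ is the self-adjoint non-positive generator on $L^2(\mu_{V\setminus\{0\}})$ of the KCSM on $V\setminus\{0\}$ with $\sigma_0$ frozen at $a$ (still a bona fide KCSM, hence reversible w.r.t.\ $\mu_{V\setminus\{0\}}$), and the identity is Feynman--Kac with the bounded nonnegative potential $(1-\nu(a))c_0$. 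The operator $H_a$ is self-adjoint and nonnegative, so $\langle\mathbf{1},e^{-tH_a}\mathbf{1}\rangle\le e^{-t\,\inf\mathrm{spec}(H_a)}$, and the proof reduces to the bound $\inf\mathrm{spec}(H_a)\ge(1-\nu(a))\mathop{\rm gap}\nolimits({\mathcal L})$.

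\textbf{Spectral comparison (the main point).} For a cylinder function $g$ on $\Omega_{V\setminus\{0\}}$ one has $\langle g,H_a g\rangle=\bar{\mathcal D}_a(g)+(1-\nu(a))\mu(c_0 g^2)$ with $\bar{\mathcal D}_a(g)=\sum_{x\neq 0}\mu\bigl(c_x^{(a)}\mathop{\rm Var}\nolimits_x(g)\bigr)$, where $c_x^{(a)}$ denotes $c_x$ evaluated at $\sigma_0=a$. I would then apply the variational bound $\mathop{\rm gap}\nolimits({\mathcal L})\mathop{\rm Var}\nolimits(h)\le{\mathcal D}(h)$ to the lift $h:=g\,\mathbf{1}_{\{\sigma_0=a\}}$ on $\Omega_V$. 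Splitting the $x=0$ term off in ${\mathcal D}(h)=\sum_x\mu(c_x\mathop{\rm Var}\nolimits_x h)$ and using that neither $g$ nor $c_0$ involves $\sigma_0$, a direct computation gives ${\mathcal D}(h)=\nu(a)\bigl[\bar{\mathcal D}_a(g)+(1-\nu(a))\mu(c_0g^2)\bigr]$ and $\mathop{\rm Var}\nolimits(h)=\nu(a)\bigl[\mu(g^2)-\nu(a)\mu(g)^2\bigr]$. Since $\mu(g)^2\le\mu(g^2)$ by Cauchy--Schwarz, $\mathop{\rm Var}\nolimits(h)\ge\nu(a)(1-\nu(a))\mu(g^2)$, and dividing out the positive factor $\nu(a)$ yields exactly
\[
\bar{\mathcal D}_a(g)+(1-\nu(a))\mu(c_0g^2)\ \ge\ (1-\nu(a))\mathop{\rm gap}\nolimits({\mathcal L})\,\mu(g^2),
\]
i.e.\ $\langle g,H_ag\rangle\ge(1-\nu(a))\mathop{\rm gap}\nolimits({\mathcal L})\,\|g\|^2$; extending from cylinder functions by density gives $\inf\mathrm{spec}(H_a)\ge(1-\nu(a))\mathop{\rm gap}\nolimits({\mathcal L})$.

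\textbf{Conclusion.} Combining the three steps, $F(t)\le\sum_{a\in S}\nu(a)\,e^{-(1-\nu(a))\mathop{\rm gap}\nolimits({\mathcal L})\,t}$. For $a\in G$ one has $\nu(a)\le\nu(G)=q$, hence $1-\nu(a)\ge p$, so the sum of these terms is at most $q\,e^{-p\mathop{\rm gap}\nolimits({\mathcal L})t}\le e^{-p\mathop{\rm gap}\nolimits({\mathcal L})t}$; symmetrically the terms with $a\notin G$ sum to at most $p\,e^{-q\mathop{\rm gap}\nolimits({\mathcal L})t}\le e^{-q\mathop{\rm gap}\nolimits({\mathcal L})t}$, and adding the two gives the claim. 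I expect the genuine difficulty to lie in the spectral comparison step: both the choice of the lift $h=g\,\mathbf{1}_{\{\sigma_0=a\}}$ --- engineered so that its Dirichlet form reproduces exactly the ``killed'' quadratic form $\bar{\mathcal D}_a+(1-\nu(a))\mu(c_0\,\cdot\,)$ --- and the elementary inequality $\mathop{\rm Var}\nolimits(h)\ge\nu(a)(1-\nu(a))\mu(g^2)$ that converts the variance of $h$ into $(1-\nu(a))$ times the squared $L^2$-norm of $g$. The reduction step also requires some care to make the Poisson thinning rigorous, in particular the independence of the bad-ring process from the frozen trajectory, but it is otherwise routine.
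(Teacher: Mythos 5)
Your proof is correct and in fact yields the slightly sharper bound $F(t)\le q\,e^{-p\,\mathop{\rm gap}\nolimits t}+p\,e^{-q\,\mathop{\rm gap}\nolimits t}$, but it reaches the conclusion by a technically different route than the paper's. The paper splits $F=F_1+F_0$, identifies $F_1(t)={\mathbb P}_\mu(T_A>t)$ as a hitting-time tail with $A=\{\eta:\eta_0=0\}$, and cites a known variational bound for hitting times (Theorem~2 of Asselah--Dai Pra) to get $F_1(t)\le e^{-\lambda_A t}$ with $\lambda_A=\inf\{{\mathcal D}(f):\mu(f^2)=1,\ f\equiv 0\text{ on }A\}$; the final step $\lambda_A\ge q\,\mathop{\rm gap}\nolimits$ is the one-line observation that any such $f$ has $\mathop{\rm Var}\nolimits(f)\ge\mu(A)=q$ by Cauchy--Schwarz. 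You instead re-derive the content of that cited theorem from scratch via the graphical construction, Poisson thinning, and Feynman--Kac, arriving at $F(t)=\sum_a\nu(a)\bigl\langle\mathbf 1,e^{-tH_a}\mathbf 1\bigr\rangle$ with $H_a=-\bar{\mathcal L}_a+(1-\nu(a))c_0$; this $H_a$ is nothing but the generator killed off $\{\sigma_0=a\}$, transported to $L^2(\mu_{V\setminus\{0\}})$, so the two reductions coincide once the black box is opened. Likewise your spectral comparison --- lifting $g$ to $h=g\,\mathbf 1_{\{\sigma_0=a\}}$, checking ${\mathcal D}(h)=\nu(a)\langle g,H_a g\rangle$ and $\mathop{\rm Var}\nolimits(h)\ge\nu(a)(1-\nu(a))\mu(g^2)$, then invoking Poincar\'e --- is the paper's variance bound written out in full, and it additionally retains the prefactor $\|\mathbf 1_{A^c}\|^2$ that the paper's statement drops. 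The paper notes that its predecessor proof used Feynman--Kac together with large-deviation estimates; you keep the Feynman--Kac step but replace large deviations with the clean variational argument, so your route can be read as a hybrid: it buys self-containment (and a marginally stronger constant) at the cost of the extra care required to make the thinning step rigorous, whereas the paper trades that work for a citation.
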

\begin{remark}
\label{remarkpers}
The above theorems disprove some conjectures which appeared in the physics literature
\cite{GPG,Ha,BG,WBG1}, based on numerical simulations and approximate
analytical treatments, on the existence of a second critical point
$q_c'>q_c$ at which the spectral gap vanishes and/or below which
$F(t)$ would decay in a stretched exponential form
$\simeq \exp(-t/\tau)^{\beta}$ with $\beta<1$.

Theorem \ref{persf} also indicates that one can obtain \emph{upper bounds} on
the spectral gap by proving \emph{lower bounds} on the persistence
function. Concretely a lower bound on the persistence function can be obtained
by restricting the $\mu$-average to those initial configurations
$\eta$ for which the origin is blocked with high probability for all times
$s\le t$.  Unfortunately in most models such a strategy leads to lower
bound on $F(t)$ which are usually quite far from the above upper bound
and it is an interesting open problem to find an exact asymptotic as
$t\to \infty$ of $F(t)$. 

Finally we observe that for the North-East model on ${\mathbb Z}^2$ at the
critical value $q=q_c$ the spectral gap vanishes and the persistence
function satisfies  $\int_0^\infty dt \,
F(\sqrt{t})=\infty$ (see Theorem 6.17 and Corollary 6.18 in \cite{noi}).
\end{remark}
\subsection{Some ideas of the strategy for proving theorems \ref{main
    theorem 01}, \ref{persf}}
\label{news} 
The main idea behind the proof of theorem \ref{main
    theorem 01} goes as follows. First of all one covers the lattice
  with non overlapping 
  cubic blocks $\{\Lambda_{\ell x}\}_{x\in {\mathbb Z}^d}$ and, on the rescaled
  lattice ${\mathbb Z}^d(\ell):=\ell {\mathbb Z}^d$, one considers the new model with
  single spin space $S=\{0,1\}^{\ell^d}$, good event $G:=G_\ell$, single
  site measure the restriction of $\mu$ to $S$ and
  renormalized constraints $\{c^{ren}_x\}_{x\in {\mathbb Z}^d(\ell)}$ which are a strengthening of the North-East ones namely
  \begin{equation*}
    c^{ren}_{x}(\eta)=1 \quad \text{ iff } \eta_y\in G \text{ for all } y\in {\mathcal K}_x^*.
  \end{equation*}
Such a model is referred to in \cite{noi} as the \emph{*-general
  model}. By assumption the probability of $G$ can be made arbitrarily
close to one by taking $\ell$ large enough and therefore, by the so
called \emph{Bisection-Constrained approach} which is detailed in the
next section for the case when $\mu$ is a high
temperature Gibbs measure, the spectral gap of the
*-general model is positive. Next one observes that  assumption (b)
of the theorem is there exactly to allow one to reconstruct any \emph{legal} move of
the *-general model, {\it i.e. \ } a full update of an entire block of spins, by means of a finite (depending only on $\ell$)
sequence of \emph{legal} moves for the original 0-1 KCMS. It is then an
easy step, using standard path techniques for comparing two different
Markov chains (see e.g. \cite{Saloff}), to go from the Poincar\'e
inequality for the *-general model to the Poincar\'e inequality for the
original model.  

The proof of (a slightly less precise version of) Theorem \ref{persf} given in \cite{noi} is based on the
Feynman-Kac formula and standard
large deviation considerations. However it is possible to provide a
simpler and more precise argument as follows. One first observe that $F(t)=F_1(t)+F_0(t)$ where
$$
F_1(t)= \int\, d\mu(\eta)\,
{\mathbb P}(\sigma^\eta_0(s)=1\ \text{for all $s\le t$})
$$
and similarly for $F_0(t)$. Consider now  
$F_1(t)$, the case of $F_0(t)$ being similar, and define $T_A(\eta)$ as the
hitting time of the set $A:=\{\eta :\ \eta_0=0\}$ starting from the
configuration $\eta$. Then (see e.g. Theorem 2 in \cite{Asselah-DaiPra})
$$
F_1(t)={\mathbb P}_\mu\Bigl(T_A>t\Bigr)\leq e^{-t \lambda_A}
$$
where ${\mathbb P}_\mu$ denotes the probability over the process started from
the equilibrium distribution $\mu$ and $\lambda_A$ is given by the
variational formula for the Dirichelt problem
\begin{equation}
  \label{eq:Dirichlet eigenvalue}
  \lambda_A := \inf\Bigl\{{\cal D}(f):\ \mu(f^2)=1,\ f\equiv 0 \text{ on
    } A\Bigr\}
\end{equation}
Notice that for any $f$ as above $\mathop{\rm Var}\nolimits(f)\geq \mu(A)=q$. Therefore 
$\lambda_A\geq q\mathop{\rm gap}\nolimits$ and the proof is complete. 
\subsection{Asymptotics of the spectral gap near the ergodicity threshold.}
An important question, particularly in connection with numerical
simulations or non-rigorous approaches, is the behavior near the
ergodicity threshold $q_c$ of the spectral gap for each specific
model. Here is a set of results proven in \cite{noi}.
  \begin{enumerate}[]    
\item  {\bf East Model.} 
\begin{equation}
\lim_{q\to 0} \log(1/\mathop{\rm gap}\nolimits)/(\log(1/q))^2 =\left(2\log 2\right)^{-1}
\label{eq:th2}
\end{equation}
\item {\bf FA-1f.} For any $d\ge 1$, there exists a constant $C=C(d)$
such that for any $q \in (0,1)$, the spectral gap on
${\mathbb Z}^d$ satisfies:
$$
\begin{array}{rcccll}
\displaystyle
C^{-1} q^3
& \leq &
\displaystyle \mathop{\rm gap}\nolimits({\mathcal L})
& \leq &
C q^3
& \qquad \text{for } d=1, \cr
\displaystyle
C^{-1} q^2/\log(1/q) &
\leq &
\displaystyle \mathop{\rm gap}\nolimits({\mathcal L})
& \leq &
\displaystyle C q^2
& \qquad \text{for } d=2, \cr
\displaystyle
C^{-1} q^2 &
\leq &
\displaystyle \mathop{\rm gap}\nolimits({\mathcal L}) & \leq
& C q^{1+\frac{2}{d}}
& \qquad \text{for } d \geq 3 .
\end{array}
$$
\item {\bf FA-df in ${\mathbb Z}^d$.}
Fix $\epsilon >0$. Then there exists $c=c(d)$ such that
 \begin{align}
   \label{eq:FA.1}
\Bigl[\exp^{d-1}(c/q^2)\Bigr]^{-1} \le& \mathop{\rm gap}\nolimits({\mathcal L}) \le
\Bigl[\exp^{d-1}\bigl(\frac{\lambda_1-\epsilon}{q}\bigr)\Bigr]^{-1} \quad
&d\ge 3 \cr
\exp(-c/q^5) \le& \mathop{\rm gap}\nolimits({\mathcal L}) \le
\exp\bigl(-\frac{(\lambda_1-\epsilon)}{q}\bigr)\quad
&d=2
 \end{align}
as $q\downarrow 0$, where $\exp^{d-1}$ denote the $(d-1)^{\rm th}$-iterate of the exponential
function and $\lambda_1=\pi^2/18$.
  \end{enumerate}
The proof of the lower bounds is a rather delicate combination of the
renormalization scheme described above together with paths
techniques as described in \cite{Saloff}. The upper bounds are proved instead either by a careful
choice of a test function in the variational characterization of the
spectral gap or by a lower bound on the persistence function $F(t)$
combined with the upper bound given in Theorem \ref{persf}.  
\begin{remark}
Again some of the above findings disprove previous claims for the East
model \cite{SE} and for the FA-1f
model in $d=2,3$ \cite{WBG1}. The result for the East model actually
came out as
a surprise. In \cite{SE} the model was considered ``essentially'' solved and the
result for the spectral gap was $\mathop{\rm gap}\nolimits\approx q^{\log_2(q)}$ as
$q\downarrow 0$ to be compared to the correct scaling $q^{\log_2(q)/2}$. In \cite{Aldous} the above solution was proved to be a
\emph{lower} bound and an upper bound of the form 
$q^{\log_2(q)/2}$ was rigorously established but considered poor 
because 
off by a power $1/2$ from the supposedly correct behavior. 

The scaling indicated in \cite{SE} is based in part on the following
consideration. Fix $q\ll 1$ and consider the East model on the interval
$\Lambda_q:=[0,\dots,1/q]$ with the last site free to flip ({\it i.e. \ } zero boundary
conditions). Notice that $1/q$ is the average distance between the
zeros. Start from the configuration identically equal to one and let $T$
be the (random) time at which the origin is able to flip. Energy
barriers consideration (see \cite{Aldous2,Aldous,Chung}) suggest that
${\mathbb E}(T)$ should scale as $q^{\log_2(q)}$ and that is what
was assumed in \cite{SE}.  However it is not difficult to prove that the
scaling of ${\mathbb E}(T)$ is bounded above by $(q \mathop{\rm gap}\nolimits)^{-1}$.  Indeed
we can write for any $t\ge 0$
$$
\exp(-c q\mathop{\rm gap}\nolimits({\mathcal L}_{\Lambda_q}) t)\ge \tilde F(t)\ge \mu(\text{all ones}){\mathbb P}(T\ge t)
\geq e^{-2} {\mathbb P}(T\ge t)
$$
where $\tilde F(t)$ is the finite volume persistence
function. Integrating over $t$ and using the monoticity of the gap (see \cite[Lemma 2.11]{noi}) give 
${\mathbb E}(T)\le e^{2}c (q\mathop{\rm gap}\nolimits({\mathcal L}_{\Lambda_q})))^{-1}
\le e^{2}c (q\mathop{\rm gap}\nolimits({\mathcal L}))^{-1}$. This,
in view of Theorem \ref{main theorem 01}, is incompatible
with the assumed scaling $q^{\log_2(q)}$. 

Moreover one can obtain a lower bound on ${\mathbb E}(T)$ as follows.
Let $\lambda$ be such that ${\mathbb P}(T\ge \lambda)= e^{-1}$ then
clearly ${\mathbb P}(T\ge t)\le e^{-\lfloor{t/\lambda}\rfloor}$ and ${\mathbb E}(T)\ge
e^{-1}\lambda$. We can always couple in the natural way two copies of the process,
one  started from all
ones and the other from any other configuration $\eta$, and conclude that 
$$
{\mathbb P}(\text{the two
  copies have not coupled at time } t)\le {\mathbb P}(T\ge t)\le e^{1-\lambda t}.
$$ 
Standard arguments give immediately that $\mathop{\rm gap}\nolimits^{-1}\le \lambda$ {\it i.e. \ }
${\mathbb E}(T)\ge e^{-1} \mathop{\rm gap}\nolimits^{-1}$. In conclusion
$$
e^{-1} \bigl[\mathop{\rm gap}\nolimits({\mathcal
  L}_{\Lambda_q})\bigr]^{-1}\le {\mathbb E}(T) \le 
e^{2}c \bigl[q\mathop{\rm gap}\nolimits({\mathcal L}_{\Lambda_q})\bigr]^{-1}
$$

\end{remark}
\section{Extension to interacting models} 
In this section we show how to extend the results on the positivity of
the spectral  gap for 0-1 KCSM
on a regular lattice ${\mathbb Z}^d$ to the case in which a weak interaction is
present among the spins. 
We begin by defining what we mean by an \emph{interaction}.
\begin{definition}
A finite range interaction $\Phi$ is a collection $\Phi:=\{\Phi_\Lambda\}_{\Lambda\in {\mathbb F}}$ where 
\begin{enumerate}[i)]
\item $\Phi_\Lambda:\Omega_\Lambda\mapsto {\mathbb R}$ for every $\Lambda\in {\mathbb F}$;
\item $\Phi_\Lambda=0$ if ${\rm diam}(\Lambda)\ge r$ for some finite $r=r(\Phi)$ called
  the range of the interaction;
\item $\|\Phi\|\equiv \sup_{x\in {\mathbb Z}^d}\sum_{\Lambda\ni x}\ninf{\Phi_\Lambda}<\infty$; 
\end{enumerate}
We will say that $\Phi\in {\mathcal B}_{M,r}$ if $r(\Phi)\le r$ and
$\|\Phi\|\le M$. 
\end{definition}
Given an interaction $\Phi\in {\mathcal B}_{r,M}$ and $\Lambda\in {\mathbb F}$, we define the energy in
$\Lambda$ of a spin configuration $\sigma\in \Omega$ by
\begin{equation*}
  H_\Lambda(\sigma) = \sum_{A\cap \Lambda\neq \emptyset}\Phi_A(\sigma) 
\end{equation*}
For $\sigma\in \Omega_\Lambda$ and $\tau\in \Omega_{\Lambda^c}$ we also let
$H_\Lambda^\tau(\sigma):=H_\Lambda(\sigma\cdot\tau)$ where $\sigma\cdot\tau$ denotes the
configuration equal to $\sigma$ inside $\Lambda$ and to $\tau$ outside it. Finally,
for any $\Lambda\in{\mathbb F}$ and $\tau\in \Omega_{\Lambda^c}$ , we define the finite volume
Gibbs measure on $\Omega_\Lambda$ with boundary conditions $\tau$ and apriori
single spin measure $\nu$ by the formula
\begin{equation*}
  \mu_\Lambda^{\Phi,\tau}(\sigma):= \frac{1}{Z_\Lambda^{\Phi,\tau}} e^{-
    H_\Lambda^\tau(\sigma)}\prod_{x\in \Lambda}\nu(\sigma_x) 
\end{equation*}
where $Z_\Lambda^{\Phi,\tau}$ is a normalization constant.

The key property of Gibbs measures is that, for any $V\subset \Lambda$ and any
$\xi$ in $\Lambda\setminus V$, the conditional Gibbs measure in $\Lambda$ with
boundary conditions $\tau$ given $\xi$ 
coincides with the Gibbs measure in $V$ with boundary condition $\tau_{\Lambda^c}\cdot \xi$. More formally
\begin{equation*}
  \mu_\Lambda^{\Phi,\tau}(\cdot
\thinspace | \thinspace\sigma_{V^c}=\xi) = \mu_V^{\Phi,{\tau_{\Lambda^c}\cdot \xi}}(\cdot)
\end{equation*}
Clearly averages w.r.t. $ \mu_\Lambda^{\Phi,\tau}(\cdot \thinspace | \thinspace\sigma_{V^c}=\xi)$ are
function of $\xi$ and, whenever confusion does not arise, we will
systematically drop $\xi$ from our notation.

As it is well known (see e.g. \cite{Simon}), for any $r<\infty$ there exists
$M_0>0$ such that for any $0<M< M_0$ the following holds. For any $\Phi\in {\mathcal B}_{r,M}$ 
there exists a unique probability measure $\mu^{\Phi}$ on $\Omega$, called the unique Gibbs measure associated to
the interaction $\Phi$ with apriori measure $\nu$, such that, for
any $\tau$,  
\begin{equation*}
\lim_{\Lambda\uparrow {\mathbb Z}^d} \mu_\Lambda^{\Phi,\tau}=\mu^\Phi 
\end{equation*}
where the limit is to be understood as a weak limit. 
Moreover the limit is reached ``exponentially fast'' in the strongest
possible sense. Namely, for any $\D\subset \Lambda\in {\mathbb F}$ and any two boundary conditions
$\tau,\tau'$, 
\begin{equation}
  \max_{\sigma_\D}\Big|\frac{\mu_\Lambda^{\Phi,\tau'}(\sigma_\D)}{\mu_\Lambda^{\Phi,\tau}(\sigma_\D)}-1  \Big|\le
  K |D_r(\tau,\tau')|\,e^{-md\bigl(\D,\,D_r(\tau,\tau')\bigr)} 
\label{strong mixing}
\end{equation}
where $D_r(\tau,\tau')=\{y:\ 0<d(y,\Lambda)\le r,\ \tau_y\neq \tau'_y\}$ and the
constants $m,K$
depend only on $M,r,d$. Moreover $m\uparrow +\infty$ as $M\downarrow 0$.
When $d=1$ the
threshold $M_0$ can be taken equal to $+\infty$. 
In all what follows we will always assume that $\Phi\in {\mathcal B}_{r,M}$ for
some apriori given $r,M$ and that $M<M_0$.
\begin{remark}
In general the constant $M_0$ does not coincide with any ``critical
point'' for the model. It is only a sort of ``high temperature
threshold'' (see \cite{MO1,MO2,MOS} for more details about this issue).   
\end{remark}
Having described the notion of the unique Gibbs measure corresponding to
$\Phi$, we can define the generator ${\mathcal L}^{\Phi}$ of a $0$-$1$ KCSM with
interaction $\Phi$ and constraints $c_x$ given by \eqref{constraints},
as the unique self-adjoint operator on $L^2(\Omega,\mu^\Phi)$ with quadratic
form 
\begin{equation*}
  {\mathcal D}^{\Phi}(f)=\sum_x\mu^{\Phi}\left(c_x \mathop{\rm Var}\nolimits^\Phi_x(f)\right),\quad \text{ $f$ local}
\end{equation*}
where now the local variance $\mathop{\rm Var}\nolimits_x^{\Phi}(f)$ is computed with the
conditional Gibbs measure given all the spins outside $x$. The construction of
the generator in a finite volume $\Lambda$ with boundary
conditions $\tau$ is exactly the
same as in the non-interacting case and we skip it. 

\subsection{Spectral gap for a weakly interacting North-East model}
Instead of trying to prove a very general result on the spectral gap of
a weakly interacting KCSM, we will explain how to deal with the
interaction in the concrete case of the North-East model introduced in
section \ref{models}. Moreover, in order not to obscure the discussion with
renormalization or block constructions, we will make the unnecessary
assumption that the basic parameter $q$ of the reference measure $\nu$
is very close to one.

\begin{Theorem}
\label{interacting NE}
Let $\{c_x\}_{ x\in {\mathbb Z}^2}$ be those of the North-East model. There
exists $q_0\in (0,1)$ and for any $r<\infty$ there exists $M_1$ such that, for any
$M<\min(M_0,M_1)$ and $q\ge q_0$,
$$
\inf_{\Phi\in {\mathcal B}_{r,M}}\mathop{\rm gap}\nolimits({\mathcal L}^{\Phi}) >0
$$   
\end{Theorem}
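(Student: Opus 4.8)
The plan is to reduce the weakly interacting North-East model to the non-interacting one by perturbation, exploiting the fact that the unique high-temperature Gibbs measure $\mu^\Phi$ is, in a quantitative sense, a small perturbation of the product measure $\mu$. The strategy proceeds in three stages: (i) establish a Poincar\'e inequality for the non-interacting North-East model at $q$ close to one via the Bisection-Constrained / renormalization scheme alluded to in Theorem \ref{main theorem 01}; (ii) show that the Dirichlet form ${\mathcal D}^\Phi$ and the variance $\mathop{\rm Var}\nolimits^\Phi$ are comparable, up to constants depending only on $m,K,r,d$ (hence uniformly in $\Phi\in{\mathcal B}_{r,M}$), to their non-interacting counterparts ${\mathcal D}$ and $\mathop{\rm Var}\nolimits$; (iii) combine (i) and (ii) to transfer the gap bound. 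The key tool throughout is the strong mixing estimate \eqref{strong mixing}, which controls the Radon-Nikodym derivatives of conditional Gibbs measures with respect to products and makes the interaction genuinely ``weak''.

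\textbf{Step (i): the product case at high $q$.} First I would run the Bisection-Constrained argument on the rescaled lattice ${\mathbb Z}^2(\ell)$ with renormalized North-East constraints $c^{ren}_x$, exactly as in the sketch following Theorem \ref{main theorem 01}, but now directly for the Gibbs measure so that one only does the construction once. The point of assuming $q$ close to one is that the good event $G_\ell$ (say, ``the block is internally spanned'', or simply ``the block is entirely empty'', which for $q$ near $1$ already has probability $\ge 1-\epsilon$) satisfies condition (a) of Definition \ref{defgelle} with $\epsilon<\epsilon_0$ without any delicate bootstrap-percolation input, and condition (b) is immediate for North-East since an empty block trivially makes the chain in an adjacent block ergodic. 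This yields $\inf_{\Lambda\in{\mathbb F}}\mathop{\rm gap}\nolimits({\mathcal L}^{\rm max}_\Lambda)>0$ for the product North-East model, hence $\mathop{\rm gap}\nolimits({\mathcal L})>0$ by the finite-volume-to-infinite-volume argument given after Corollary \ref{IS}.

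\textbf{Step (ii): comparison of Dirichlet forms.} The local variance $\mathop{\rm Var}\nolimits^\Phi_x(f)$ is computed with respect to the one-site conditional Gibbs measure $\mu^\Phi(\cdot\,|\,\sigma_{\{x\}^c})$. By \eqref{strong mixing} applied with $\D=\{x\}$, the ratio of this conditional measure to the a priori measure $\nu$ is within $O(e^{-m})$ of one (uniformly in the outside configuration), so there exists $c_M\to 1$ as $M\to 0$ with $c_M^{-1}\,\mathop{\rm Var}\nolimits_x(f)\le \mathop{\rm Var}\nolimits^\Phi_x(f)\le c_M\,\mathop{\rm Var}\nolimits_x(f)$ pointwise. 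Summing against the constraints and using that $\mu^\Phi$ itself has a density with respect to $\mu$ bounded above and below (again a consequence of finite-range weak interaction), one gets $C_M^{-1}{\mathcal D}(f)\le {\mathcal D}^\Phi(f)\le C_M{\mathcal D}(f)$ and likewise $C_M^{-1}\mathop{\rm Var}\nolimits(f)\le \mathop{\rm Var}\nolimits^\Phi(f)\le C_M\mathop{\rm Var}\nolimits(f)$, with constants depending only on $r,M,d$ and uniform over $\Phi\in{\mathcal B}_{r,M}$. Choosing $M_1$ small enough that $M<\min(M_0,M_1)$ forces $C_M$ close to $1$, and combining with Step (i) gives $\mathop{\rm gap}\nolimits({\mathcal L}^\Phi)\ge C_M^{-2}\mathop{\rm gap}\nolimits({\mathcal L})>0$ uniformly in $\Phi$, which is the claim.

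\textbf{The main obstacle} I expect is Step (ii) done honestly: the global comparison $\mathop{\rm Var}\nolimits^\Phi(f)\asymp\mathop{\rm Var}\nolimits(f)$ is not literally pointwise and requires controlling $d\mu^\Phi/d\mu$, which is only uniformly bounded because the interaction is finite range and weak (in infinite volume this needs the exponential decay \eqref{strong mixing} summed in a telescoping fashion). A cleaner route, avoiding this global comparison, is to prove the Poincar\'e inequality directly for $\mu^\Phi$ at the renormalized level --- i.e. redo the Bisection-Constrained induction with $\mu$ replaced by $\mu^\Phi$ --- using that the strong mixing bound \eqref{strong mixing} supplies exactly the quantitative ``approximate product'' structure that the bisection scheme needs (this is precisely why the excerpt promises to detail that approach ``for the case when $\mu$ is a high temperature Gibbs measure'' in the next section). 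In either implementation, the delicate accounting is ensuring every constant that enters is a function of $r,M,d$ alone, so that the $\inf_{\Phi\in{\mathcal B}_{r,M}}$ survives; once that bookkeeping is set up the rest is routine.
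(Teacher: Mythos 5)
Your Step (ii) contains a genuine gap that the ``main obstacle'' paragraph correctly sniffs out but then waves away. In infinite volume the Gibbs measure $\mu^\Phi$ and the product measure $\mu$ are \emph{mutually singular} for any nontrivial $\Phi$; the density $d\mu^\Phi/d\mu$ does not exist, let alone satisfy a uniform two-sided bound, and no amount of ``telescoping'' of \eqref{strong mixing} repairs this. Concretely, for a local function $f$ supported in a box $K$ of side $L$, the marginal densities $d\mu^\Phi_{K}/d\mu_{K}$ are of size $e^{\Theta(M L^2)}$, so the constants $C_M$ you would get in $C_M^{-1}{\mathcal D}(f)\le{\mathcal D}^\Phi(f)\le C_M{\mathcal D}(f)$ and the analogous variance bounds grow without bound in $L$. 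Since the spectral gap is an infimum over \emph{all} local $f$, these volume-dependent constants make the transfer $\mathop{\rm gap}\nolimits({\mathcal L}^\Phi)\ge C_M^{-2}\mathop{\rm gap}\nolimits({\mathcal L})$ collapse. The pointwise comparison of the \emph{single-site} variances $\mathop{\rm Var}\nolimits^\Phi_x(f)\asymp\mathop{\rm Var}\nolimits_x(f)$ is correct and uniform, but it is the \emph{outer} expectation (a global integral against $\mu^\Phi$ vs.\ $\mu$) that cannot be compared.

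The ``cleaner route'' you hint at --- running Bisection-Constrained directly for $\mu^\Phi$ --- is indeed the paper's method, but the adaptation is not the ``routine bookkeeping'' you suggest. Two genuine ideas are needed. First, the B-C induction requires a monotonicity $\gamma(V)\le\gamma(\Lambda)$ for $V\subset\Lambda$; with a product measure this is trivial, but for Gibbs measures the paper proves Lemma \ref{Mon} by constructing, for each boundary condition $\xi$ on $V^c$, a modified interaction $\Phi^\xi\in{\mathcal B}_{r,M}$ whose finite-$\Lambda$ Gibbs measure factors as $\mu^{\Phi,\xi}_V\otimes\nu_{\Lambda\setminus V}$; this is a small but essential trick. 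Second, and more importantly, the constrained block dynamics must be constrained by an event that is compatible with the North-East constraint: the paper requires a \emph{top-bottom crossing of zeros} in the overlap strip $I$, not merely that a block is good, precisely so that conditioning on the rightmost crossing $\Pi_\omega$ puts \emph{maximal} boundary conditions on $\Lambda_\omega$ (the part of $\Lambda$ to the left of the crossing) for the North-East constraint --- this is why $c_{x,\Lambda_\omega}=c_{x,\Lambda}$ there (equation \eqref{C}) and why the recursion closes. The requirements $q\ge q_0$ and $M<M_1$ enter exactly at this step (Lemma \ref{pippo}): they guarantee the crossings exist with high enough probability even after the interaction tilts the single-site marginals by $e^{\pm 2M}$. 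Your proposal also needlessly conflates the block renormalization of Theorem \ref{main theorem 01} (useful for small $q$) with the B-C rectangle-size induction; the paper avoids renormalization altogether by assuming $q$ near $1$ and runs B-C directly on the lattice.
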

\begin{remark}
  As we will see in the proof of the theorem, the restriction on
  strength of the interaction comes from two different requirements. The
  first one is that the finite volume Gibbs measure has the very strong
  mixing property uniformly in the boundary conditions given in
  \eqref{strong mixing}. That, as we
  pointed out previously, is guaranteed as long as $M<M_0$. The second
  one requires that the zeros, which certainly percolate in a robust way
  w.r.t. the unperturbed measure $\nu$ because of the assumption
  $q\approx 1$, continue to do so even when we switch on the
  interaction. It is worthwhile to observe that for the one dimensional 
  East model, the first requirement is satisfied for any $M<\infty$ and
  that the second one is simply not necessary. Therefore for the East
  model  the above theorem should be reformulated as follows.
  \begin{Theorem}
Let $\{c_x\}_{ x\in {\mathbb Z}}$ be those of the East model. For any finite
pair $(r,M)$
$$
\inf_{\Phi\in {\mathcal B}_{r,M}}\mathop{\rm gap}\nolimits({\mathcal L}^{\Phi}) >0
$$   
\end{Theorem}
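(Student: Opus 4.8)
The plan is to follow the same renormalization-plus-path strategy used to prove Theorem~\ref{main theorem 01} and Theorem~\ref{interacting NE}, but to exploit the two special features of the one-dimensional East model that are highlighted in the remark: the $d=1$ strong mixing bound \eqref{strong mixing} holds for \emph{all} finite $M$ (since $M_0=+\infty$ when $d=1$), and no percolation-of-zeros argument is needed because the East constraint only looks one step to the right. First I would fix $(r,M)$ and a block length $\ell$ (to be chosen large depending on $r,M$), partition ${\mathbb Z}$ into consecutive intervals $\{\Lambda_{\ell x}\}_{x\in{\mathbb Z}}$ of length $\ell$, and introduce the renormalized chain on $S=\{0,1\}^{\Lambda_0}$ whose good event $G_\ell$ is ``the block can be emptied by legal East moves given that it sees a zero immediately to its right''. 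Because $q$ is as large as we like only in the unperturbed case, here I instead use that for the East model even moderate $q$ suffices: for the one-dimensional East model with empty right boundary the block $\Lambda_0$ is internally spanned with probability $\ge 1-\epsilon$ once $\ell$ is large, and this persists under the Gibbs perturbation by the strong mixing estimate \eqref{strong mixing} applied with $\D=\Lambda_0$, which shows $\mu^\Phi_{\Lambda_0}(G_\ell)$ differs from $\mu(G_\ell)$ by at most $K\ell e^{-m\ell'}$-type corrections, uniformly in $\Phi\in{\mathcal B}_{r,M}$. Choosing $\ell$ large, $\mu^\Phi(G_\ell)\ge 1-\epsilon_0$.

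Next I would run the Bisection-Constrained argument of the following section (the one developed there precisely for the Gibbs case) to conclude that the renormalized East-type chain on ${\mathbb Z}(\ell)$ has a positive spectral gap, uniformly in $\Phi\in{\mathcal B}_{r,M}$: the key inputs are (i) the renormalized single-site measure inherits the strong mixing property \eqref{strong mixing} from $\mu^\Phi$, uniformly over $\Phi$, and (ii) the renormalized good probability exceeds the universal threshold. Then, exactly as in the sketch after Theorem~\ref{main theorem 01}, I would use a finite-length path argument (standard comparison of Dirichlet forms, e.g.\ \cite{Saloff}) to reconstruct each legal renormalized move — a full resampling of one block $\Lambda_{\ell x}$, allowed when $\Lambda_{\ell(x+1)}$ is good — by a bounded-length sequence of legal single-spin East moves inside $\Lambda_{\ell x}\cup\Lambda_{\ell(x+1)}$; the congestion constant of this path is finite and depends only on $\ell, r, M$, not on the volume, because the Gibbs weights along the path are comparable up to a constant (again by \eqref{strong mixing}). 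This yields a Poincar\'e inequality for ${\mathcal L}^\Phi$ in any finite interval with the maximal boundary condition, with a constant independent of the interval, and hence $\inf_{\Phi\in{\mathcal B}_{r,M}}\mathop{\rm gap}\nolimits({\mathcal L}^\Phi)>0$ after passing to the infinite volume as in the finite-to-infinite comparison already given in the excerpt.

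The main obstacle I anticipate is controlling the path-comparison congestion constant \emph{uniformly in the interaction}: when I relate the renormalized Dirichlet form to the original one, ratios of Gibbs measures $\mu^\Phi_\Lambda(\sigma)/\mu^\Phi_\Lambda(\sigma')$ for configurations $\sigma,\sigma'$ differing on an $O(\ell)$ block enter, and I must bound these by a constant depending only on $\ell,r,M$. This is where $M_0=+\infty$ for $d=1$ is essential — it guarantees \eqref{strong mixing}, hence bounded energy differences over finite blocks, for every finite $M$ — and it is the step that fails for the North-East model in $d=2$ and forces the extra hypothesis $M<M_0$ there. A secondary, more routine point is checking that condition (b)-type ergodicity (``legal moves empty the block given an empty right neighbor'') is genuinely $\Phi$-independent, which it is, since the constraints $c_x$ do not depend on $\Phi$ at all; only the \emph{probabilities} are perturbed, and those are handled by the mixing bound.
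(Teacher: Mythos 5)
Your proposal correctly isolates what makes $d=1$ special---$M_0=+\infty$, so \eqref{strong mixing} holds for every finite $M$, and no percolation of zeros is needed---but you then take a substantially more roundabout route than the paper intends. The theorem is stated in the remark right after the proof of Theorem~\ref{interacting NE} precisely because its proof is a \emph{direct} rerun of the Bisection--Constrained argument at the microscopic level, with \emph{no} renormalization and \emph{no} path comparison. One bisects an interval $\Lambda$ into $\Lambda_1\cup\Lambda_2$ overlapping in $\Delta$, takes $I$ to be the right half of $\Delta$, and constrains the update of $B_1=\Lambda_1\setminus I$ on the event that $I$ contains at least one zero (the one-dimensional counterpart of the top--bottom crossing). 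The only two places in the proof of Theorem~\ref{interacting NE} where $q\ge q_0$ and $M<M_1$ entered were Lemma~\ref{SM} and Lemma~\ref{pippo}; in $d=1$ Lemma~\ref{SM} holds for all $M$, and Lemma~\ref{pippo} is replaced by the elementary bound
\begin{equation*}
\mu_{B_2}^{\Phi,\tau}\bigl(\text{no zero in } I\bigr)\ \le\ e^{-c(q,r,M)\,|I|}\qquad \text{for all } q\in(0,1),\ M<\infty,
\end{equation*}
which follows since strong mixing (or just uniform nondegeneracy of the single-site conditional Gibbs probabilities) keeps each conditional probability of a $1$ strictly below one. With those two inputs, Proposition~\ref{gapblock}, the $\Lambda_\omega$-reduction via \eqref{C} (where $\Lambda_\omega$ is now everything to the left of the leftmost zero in $I$), and the recursion \eqref{eq:4} go through verbatim and give \eqref{stronger}.

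Your renormalize-then-B-C-then-path-compare plan is plausible in outline, but it imports two complications the direct route sidesteps and which you leave unresolved. First, after coarse-graining, the block measure is a \emph{marginal} of $\mu^{\Phi}$, not a finite-range Gibbs measure; Lemma~\ref{Mon}, which builds the modified interaction $\Phi^{\xi}$, and the way the B-C scheme leans on the Gibbs structure would have to be re-derived in terms of conditional expectations of the original $\mu^\Phi$ rather than applied to the coarse-grained measure as-is. Second, the congestion constant in the path comparison must be proved uniform in $\Phi\in{\mathcal B}_{r,M}$, which means controlling ratios of Gibbs weights along paths of length $O(\ell)$; this is indeed handled by \eqref{strong mixing}, as you anticipate, but it is genuinely extra work. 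Both difficulties evaporate if one simply repeats, word for word, the microscopic B-C proof with the two one-dimensional simplifications noted above.
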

\end{remark}
\begin{proof}[of Theorem \ref{interacting NE}]
We will follow the pattern of the proof for the non interacting case
given in \cite{noi} and we will establish the stronger result
\begin{equation}
  \sup_{\Lambda\in{\mathbb F}} \gamma(\Lambda) <+\infty,\quad \text{where}\quad 
\gamma(\Lambda):=\Bigl(\inf_{\Phi\in {\mathcal B}_{r,M}}\inf_{\tau\in {\rm Max}_\Lambda}\mathop{\rm gap}\nolimits({\mathcal L}_\Lambda^{\Phi,\tau})\Bigr)^{-1}
\label{stronger}
\end{equation}
provided that $q>q_0$ is large and $M$ is taken sufficiently
small. Above ${\rm Max}_\Lambda$ denotes the set of configurations in
$\Omega_{\Lambda^c}$ which are identically equal to zero on $\partial_+^*\Lambda$. In
what follows in order to simplify the notation we will not write the
dependence on the boundary condition of the transition rates.

As in \cite{noi}  the first step consists in proving a certain
monotonicity property of $\gamma(\Lambda)$.

\begin{Lemma}
\label{Mon}
For any $V\subset \Lambda\in {\mathbb F}$,
$$
0<\gamma(V)\le \gamma(\Lambda)<\infty
$$
\end{Lemma}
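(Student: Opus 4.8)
The plan is to prove the monotonicity $\gamma(V)\le\gamma(\Lambda)$ by comparing the two finite-volume Dirichlet forms after conditioning. Fix $V\subset\Lambda$, a boundary condition $\tau\in{\rm Max}_\Lambda$, and an interaction $\Phi\in{\mathcal B}_{r,M}$; write $\mu:=\mu_\Lambda^{\Phi,\tau}$ for brevity. For a local function $f$ on $\Omega_\Lambda$ I would use the standard decomposition of the variance along the chain of conditionings: $\mathop{\rm Var}\nolimits_\Lambda(f)=\mu\bigl(\mathop{\rm Var}\nolimits_V(f\mid\sigma_{V^c})\bigr)+\mathop{\rm Var}\nolimits_\Lambda\bigl(\mu(f\mid\sigma_{V^c})\bigr)$, where $\mathop{\rm Var}\nolimits_V(\cdot\mid\sigma_{V^c})$ is the variance with respect to the conditional Gibbs measure on $\Omega_V$, which by the DLR property is exactly $\mu_V^{\Phi,\tau_{\Lambda^c}\cdot\sigma_{\Lambda\setminus V}}$. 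The first term is bounded using the definition of $\gamma(V)$: since the conditional measure is a finite-volume Gibbs measure on $V$ with some boundary configuration, we have $\mathop{\rm Var}\nolimits_V(f\mid\sigma_{V^c})\le\gamma(V)\sum_{x\in V}\mu\bigl(c_x\mathop{\rm Var}\nolimits_x(f)\mid\sigma_{V^c}\bigr)$ — but here one must be careful, because $\gamma(V)$ is only defined with boundary conditions in ${\rm Max}_V$, i.e.\ empty on $\partial_+^*V$, whereas the conditional measure has an essentially arbitrary boundary configuration on $\Lambda\setminus V$.

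This boundary-condition discrepancy is the point I expect to be the main obstacle, and it is presumably handled exactly as in \cite{noi}: one does not need the exact gap for arbitrary boundary conditions, only that it is bounded below by $\gamma(V)^{-1}$ whenever the boundary is "at least as good as" a ${\rm Max}$ boundary, and for the North-East (and East) constraints, making more sites of the boundary empty only relaxes the constraints, so by the domination/monotonicity principle recalled earlier in the excerpt the gap can only increase. Concretely, one replaces the actual conditional boundary configuration by the one that is zero on all of $\partial_+^*V$; this dominates the chain, hence gives a smaller Dirichlet form and a gap bounded below by $\gamma(V)^{-1}$. (The mixing estimate \eqref{strong mixing} enters here only to control how the Gibbs weights change; since $M<M_0$ the conditional measures are all comparable, with constants depending only on $M,r,d$, and this comparison is uniform in the configuration.) Summing the resulting bound over the conditioning and reassembling: the first variance term is at most $\gamma(V)\sum_{x\in V}\mu\bigl(c_x\mathop{\rm Var}\nolimits_x(f)\bigr)$, up to the harmless mixing constant.

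For the second term, $\mathop{\rm Var}\nolimits_\Lambda\bigl(\mu(f\mid\sigma_{V^c})\bigr)$, note that $g:=\mu(f\mid\sigma_{V^c})$ depends only on the spins in $\Lambda\setminus V$, and one bounds its variance by $\gamma(\Lambda')^{-1}$-type reasoning on the smaller volume $\Lambda\setminus V$ — more precisely, by the standard recursive/telescoping argument one iterates this decomposition down a fixed sequence of nested boxes, or, more cleanly, one directly applies the Poincaré inequality for $\mu_\Lambda^{\Phi,\tau}$ restricted to functions of $\sigma_{\Lambda\setminus V}$, which is again controlled by $\gamma(\Lambda)$ since those are legitimate test functions for the full $\Lambda$-chain. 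Combining the two bounds gives ${\mathcal D}_\Lambda^{\Phi,\tau}(f)\ge c\,\gamma(\Lambda)^{-1}\mathop{\rm Var}\nolimits_\Lambda(f)$ with $c$ depending only on $M,r,d$, whence $\gamma(\Lambda)\ge c\,\gamma(V)$; absorbing $c$ (or rather, running the argument so that $c=1$, which the Gibbsian comparison allows once $M$ is small enough) yields $\gamma(V)\le\gamma(\Lambda)$. Finiteness of $\gamma(V)$ for every fixed finite $V$ is immediate since the chain in a finite volume with a ${\rm Max}$ boundary is irreducible (the North-East constraint is unblocked from the empty $\partial_+^*$ boundary) and the state space is finite, so the gap is strictly positive; positivity of $\gamma(\Lambda)$ is the same statement, and the uniform bound $\sup_\Lambda\gamma(\Lambda)<\infty$ claimed in \eqref{stronger} is what the bisection scheme in the next section will deliver, not this lemma.
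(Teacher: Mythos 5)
Your argument runs in the wrong direction. The variance decomposition
$\mathop{\rm Var}\nolimits_\Lambda(f)=\mu\bigl(\mathop{\rm Var}\nolimits_V(f\mid\sigma_{V^c})\bigr)+\mathop{\rm Var}\nolimits_\Lambda\bigl(\mu(f\mid\sigma_{V^c})\bigr)$
is the standard tool for proving a Poincar\'e inequality on the \emph{large} volume $\Lambda$ from control on smaller pieces: if you succeed, what you get is an upper bound
$\mathop{\rm Var}\nolimits_\Lambda(f)\le C\bigl(\gamma(V),\dots\bigr)\,{\mathcal D}_\Lambda(f)$ for all $f$, i.e.\ $\gamma(\Lambda)\le C\bigl(\gamma(V),\dots\bigr)$. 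That is precisely the \emph{opposite} inequality to $\gamma(V)\le\gamma(\Lambda)$. The final line of your write-up (``${\mathcal D}_\Lambda(f)\ge c\,\gamma(\Lambda)^{-1}\mathop{\rm Var}\nolimits_\Lambda(f)$\dots whence $\gamma(\Lambda)\ge c\,\gamma(V)$'') does not follow from anything preceding it; nothing in the two bounds you assemble produces a lower bound on ${\mathcal D}_\Lambda(f)$ with the constant $\gamma(\Lambda)^{-1}$ appearing on the right, and the inference to $\gamma(\Lambda)\ge c\,\gamma(V)$ is a non sequitur. The boundary-condition mismatch you flagged is indeed an issue in your scheme, but it is secondary to this directional problem, and there is no way to patch it by running the same decomposition harder.

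To prove $\gamma(V)\le\gamma(\Lambda)$ you must go the other way: start from a test function $f$ for the $V$-problem and show that the $\Lambda$-Poincar\'e inequality already controls it. The paper does exactly this. Fix $\xi\in{\rm Max}_V$ and define a new interaction $\Phi^\xi$ that absorbs $\xi$ as an external field and vanishes on any set meeting $V^c$; then $\Phi^\xi\in{\mathcal B}_{r,M}$ and $\mu_\Lambda^{\Phi^\xi}=\mu_V^{\Phi,\xi}\otimes\nu_{\Lambda\setminus V}$ is a product. Any $f$ on $\Omega_V$ is then also a function on $\Omega_\Lambda$, with $\mathop{\rm Var}\nolimits_V^{\Phi,\xi}(f)=\mathop{\rm Var}\nolimits_\Lambda^{\Phi^\xi,\tau}(f)$, and the $\Lambda$-Poincar\'e inequality applies directly. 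Finally ${\mathcal D}_\Lambda^{\Phi^\xi,\tau}(f)\le{\mathcal D}_V^{\Phi,\xi}(f)$ because (i) the conditional single-site variances agree under the product structure and (ii) $c_{x,\Lambda}\le c_{x,V}$ for $x\in V$ since $\xi\in{\rm Max}_V$ empties $\partial_+^*V$. Chaining these gives $\mathop{\rm Var}\nolimits_V^{\Phi,\xi}(f)\le\gamma(\Lambda)\,{\mathcal D}_V^{\Phi,\xi}(f)$ with no extraneous constant and no appeal to the mixing estimate. Note also that the inequality is clean, not up to a constant $c$: the claim is $\gamma(V)\le\gamma(\Lambda)$ exactly, which your proposed ``absorb $c$ when $M$ is small'' step cannot deliver. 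Your observations on strict positivity and finiteness for fixed finite $V$ (irreducibility plus a finite state space), and on $\sup_\Lambda\gamma(\Lambda)<\infty$ being the job of the bisection scheme rather than of this lemma, are correct.
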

\begin{proof}[Proof of the Lemma]
Fix $\Phi\in {\mathcal B}_{r,M}$ and, for any $\xi\in {\rm Max}_V$, define the new interaction
$\Phi^\xi$ as follows:
\begin{equation*}
  \Phi^\xi_A(\sigma_A)=
  \begin{cases}
   0 & \text{ if $A\cap V^c\neq\emptyset$}\cr
 \sum_{A':\, A'\cap V=A}\Phi_{A'}(\sigma_A\cdot \xi_{A'\setminus A}) &
 \text{ if $A\subset V$}
  \end{cases}
\end{equation*}
Notice that, by construction, 
$$
r(\Phi^\xi)\le r(\Phi)\quad \text{and}\quad \sup_x\sum_{A\ni x}\ninf{\Phi_A^\xi}\le \ninf{\Phi}
$$ 
so that $\Phi^\xi\in {\mathcal B}_{r,M}$. Next observe that the Gibbs
measure on $\Lambda$ with interaction $\Phi^\xi$ is simply the product measure 
$$ 
\mu^{\Phi^\xi}_{\Lambda}(\sigma_\Lambda):=\mu^{\Phi,\xi}_V(\sigma_V)\otimes\nu_{\Lambda\setminus
  V}(\sigma_{\Lambda\setminus V})\quad \text{on}\quad \Omega_\Lambda=\Omega_V\otimes\Omega_{\Lambda\setminus V}
$$
Thus, for any $f\in L^2(\Omega_V,\mu_V^{\Phi,\xi})$ and $\tau\in\rm Max_{\Lambda}$,
we can write ($\mathop{\rm Var}\nolimits_\Lambda^{\Phi,\tau}\equiv \mathop{\rm Var}\nolimits_{\mu_\Lambda^{\Phi,\tau}}$)
\begin{gather*}
\mathop{\rm Var}\nolimits_V^{\Phi,\xi}(f)=\mathop{\rm Var}\nolimits_\Lambda^{\Phi^\xi,{\tau}}(f) \hfill\break
\le\,\gamma(\Lambda)\,{\mathcal D}_\Lambda^{\Phi^\xi,\tau}(f)
\hfill\break 
\le \gamma(\Lambda)\,{\mathcal D}_V^{\Phi,\xi}(f)
\end{gather*}
where, in the last inequality, we used the fact that, for any $x\in V$ and any $\omega\in\Omega_\Lambda$,
$c_{x,\Lambda}(\omega)\le c_{x,V}(\omega)$ because $\xi\in {\rm Max}_V$, together with 
$$
\mathop{\rm Var}\nolimits_\Lambda^{\Phi^\xi,{\tau}}(f \thinspace | \thinspace\{\sigma_y\}_{y\neq x}) = \mathop{\rm Var}\nolimits_V^{\Phi,\xi}(f \thinspace | \thinspace\{\sigma_y\}_{y\neq x}).
$$ 
\qed \end{proof}
\noindent
Thanks to Lemma \ref{Mon} we need to prove \eqref{stronger} only when
$\Lambda$ runs through all possible rectangles. For this purpose our main
ingredient will be the bisection technique of \cite{SFlour} which, in its
essence, consists in proving a suitable recursion relation between
spectral gap on scale $2L$ with that on scale $L$, combined with the
novel idea of considering an accelerated block dynamics which is itself
constrained. Such an approach is referred to in \cite{noi} as the
\emph{Bisection-Constrained} or \emph{B-C approach}.

In order to present it we first need to recall some simple facts from two
dimensional percolation. 

A \emph{path} is a collection $\{x_0,x_1,\dots,x_n\}$ of distinct points
in ${\mathbb Z}^2$ such that $d(x_i,x_{i+1})=1$ for all $i$. A \emph{$*$-path}
is a collection $\{x_0,x_1,\dots,x_n\}$ of distinct points in ${\mathbb Z}^2$
such that $x_{i+1}\in {\mathcal N}_{x_i}^*$ for all $i$. Given a rectangle $\Lambda$
and a direction $\vec e_i$, we will say that a path $\{x_0,\dots,x_n\}$
traverses $\Lambda$ in the $i^{th}$-direction if $\{x_0,\dots,x_n\}\subset \Lambda$
and $x_0,x_n$ lay on the two opposite sides of $\Lambda$ orthogonal to $\vec
e_i$.
   
\begin{definition}
  Given a rectangle $\Lambda$ and a configuration $\omega\in\Omega_\Lambda$,
  a path $\{x_0,\dots,x_n\}$ is called a \emph{top-bottom crossing}
  (\emph{left-right crossing}) if it traverses $\Lambda$ in the
   vertical (horizontal) direction and $\omega_{x_i}=0$ for all
  $i=0,\dots,n$. The  rightmost (lower-most) such crossings (see \cite{Grimmett} page 317) 
will be denoted by $\Pi_\omega$ 
\end{definition}

\begin{remark}\label{rem1}
Given a rectangle $\Lambda$ and a path $\Gamma$ traversing $\Lambda$
in e.g. the vertical direction, let $\Lambda_\Gamma$
consists of all the sites in $\Lambda$ which are in $\Gamma$ or to the right of
it. Then, as remarked in \cite{Grimmett}, the event
$\{\omega:\ \Pi_\omega=\Gamma\}$ depends only on the variables $\omega_x$ with
$x\in \Lambda_\Gamma$. 
\end{remark}

We are now ready to start the actual proof of the theorem. 
At the beginning the method requires a simple geometric result (see
\cite{Cesi}) which we now describe.

Let $l_k := (3/2)^{k/2}$, and let ${\mathbb F}_k$ be the set of all rectangles
$\Lambda\subset {\mathbb Z}^2$
which, modulo translations and permutations of the coordinates, are
contained in
$
  [ 0,l_{k+1} ] \times [0,l_{k+2}]
$.
The main property of ${\mathbb F}_k$ is that each rectangle in 
${\mathbb F}_k\setminus {\mathbb F}_{k-1}$ can be obtained as a ``slightly
overlapping union'' of two rectangles in ${\mathbb F}_{k-1}$. 

\begin{Lemma}
\label{geom}
For all $k\in {\mathbb Z}_+$,
for all $\Lambda \in {\mathbb F}_k\setminus {\mathbb F}_{k-1}$ there exists a finite sequence
$\{\Lambda_1^{(i)}, \Lambda_2^{(i)}\}_{i=1}^{s_k}$ in ${\mathbb F}_{k-1}$, where 
$s_k := \lfloor l_k^{1/3} \rfloor$, such that, letting $\delta_k := \frac 18
\sqrt{l_k}-2$,  
\begin{enumerate}[(i)]
\item $\Lambda = \Lambda_1^{(i)} \cup \Lambda_2^{(i)}$, 
\item  $d(\Lambda\setminus \Lambda_1^{(i)}, \Lambda\setminus \Lambda_2^{(i)}) \ge \delta_k $,  
\item $\left(\Lambda_1^{(i)}\cap \Lambda_2^{(i)}\right)\cap \left(\Lambda_1^{(j)}\cap \Lambda_2^{(j)}\right) 
        = \emptyset$,  if $i\ne j$.   
\end{enumerate}
\end{Lemma}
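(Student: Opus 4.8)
The plan is to reduce this to an entirely one-dimensional statement, since the geometric construction acts independently on the two coordinate directions. Write $\Lambda$, modulo translations and permutations of the coordinates, as $[0,a]\times[0,b]$ with $a\le b$ and, since $\Lambda\in{\mathbb F}_k\setminus{\mathbb F}_{k-1}$, with $a\le l_{k+1}$, $b\le l_{k+2}$, and $b>l_{k+1}$ (the latter being the only way to fail to lie in ${\mathbb F}_{k-1}$ after a permutation; the case $a>l_k$ is handled symmetrically by cutting the first coordinate instead). So the long side $[0,b]$ is the one we must subdivide. The idea is to choose $s_k$ pairs of subintervals $\{[0,\beta_1^{(i)}],[\alpha_2^{(i)},b]\}_{i=1}^{s_k}$ of $[0,b]$, each covering $[0,b]$ with a controlled overlap, whose overlap regions $[\alpha_2^{(i)},\beta_1^{(i)}]$ are pairwise disjoint, and then set $\Lambda_1^{(i)}:=[0,\beta_1^{(i)}]\times[0,a]$ (after undoing the permutation) and similarly for $\Lambda_2^{(i)}$. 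Properties (i) and (iii) are then immediate from the one-dimensional choice, and (ii) becomes $d([0,b]\setminus[0,\beta_1^{(i)}],[0,b]\setminus[\alpha_2^{(i)},b])=\alpha_2^{(i)}-\beta_1^{(i)}\ge\delta_k$... wait, sign: the overlap is $[\alpha_2^{(i)},\beta_1^{(i)}]$ with $\alpha_2^{(i)}\le\beta_1^{(i)}$, and the complement of the first factor is $(\beta_1^{(i)},b]$ while the complement of the second is $[0,\alpha_2^{(i)})$, so their distance is $\beta_1^{(i)}-\alpha_2^{(i)}$, which is the overlap length; we need this $\ge\delta_k$.

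The concrete choice: let the common overlap length be $w:=\lceil\delta_k\rceil$ (so $w\ge\delta_k=\tfrac18\sqrt{l_k}-2$), place the $s_k$ overlap windows consecutively but disjointly in the ``middle band'' of $[0,b]$ around its midpoint, say the $i$-th window being $[\,c+(i-1)w,\;c+iw\,]$ for a suitable offset $c$ chosen so that all $s_k$ windows are centered near $b/2$, and take $\beta_1^{(i)}:=c+iw$, $\alpha_2^{(i)}:=c+(i-1)w$. Then $\Lambda_1^{(i)}=[0,\beta_1^{(i)}]\times[0,a]$ has long side $\beta_1^{(i)}\le c+s_k w$ and $\Lambda_2^{(i)}=[\alpha_2^{(i)},b]\times[0,a]$ has long side $b-\alpha_2^{(i)}$; we must check both are in ${\mathbb F}_{k-1}$, i.e. after permutation both fit in $[0,l_k]\times[0,l_{k+1}]$. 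The short side $a\le l_{k+1}\le l_{k+1}$ is fine for the ``$l_{k+1}$'' slot; the long sides must be $\le l_k$. Since $b\le l_{k+2}=(3/2)\,l_k$ and the total spread of the $s_k$ overlap windows is $s_k w\le l_k^{1/3}\cdot(\tfrac18\sqrt{l_k}+1)=O(l_k^{5/6})=o(l_k)$, one can center the band so that $\beta_1^{(i)}\le b/2+O(l_k^{5/6})\le l_k$ and $b-\alpha_2^{(i)}\le b/2+O(l_k^{5/6})\le l_k$ for all $k$ large; the finitely many small $k$ are checked (or absorbed) by hand. This is the only quantitative point and it is a routine estimate comparing $l_{k+2}/2=\tfrac34 l_k$ against $l_k$ with slack $\tfrac14 l_k$, which dominates $s_k w=O(l_k^{5/6})$.

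The main obstacle — really the only one requiring care — is the bookkeeping that both halves simultaneously land in ${\mathbb F}_{k-1}$: one is subdividing the \emph{long} side, whose length can be as large as $l_{k+2}$, into two pieces each of length $\le l_k$, which forces the cut point (hence every overlap window) to sit in a narrow band of width $\sim\tfrac14 l_k$ straddling the midpoint, and one must verify that $s_k$ disjoint windows of width $\gtrsim\sqrt{l_k}$ plus the allowed wobble fit inside that band. The growth rates have been rigged precisely so that $s_k\cdot\delta_k=O(l_k^{5/6})\ll l_k$, so there is ample room; the verification is a direct comparison of these powers of $l_k$ together with a check of the base cases, and contains no conceptual difficulty. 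Undoing the coordinate permutation at the end returns the rectangles to the original orientation, and the three stated properties hold by construction. \qed
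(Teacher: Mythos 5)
The paper does not prove Lemma~\ref{geom}; it is stated with a citation to \cite{Cesi}, where the analogous bisection recursion is established. Your argument is essentially the one from that reference: reduce to a one-dimensional cut of the long side near its midpoint, pack $s_k$ pairwise-disjoint overlap windows of width about $\delta_k$ in a central band, and check that the total band width $s_k\delta_k = O(l_k^{5/6})$ is negligible against the available slack $\sim l_k/4$, so both halves land in ${\mathbb F}_{k-1}$. The approach is correct; a few lattice-specific wrinkles should be tidied but do not affect the substance. First, on ${\mathbb Z}^2$ the windows $[c+(i-1)w,\,c+iw]$ for consecutive $i$ share an endpoint and hence overlap in one column, violating (iii) as written; shift the $i$-th window by an extra $i-1$ lattice units (width $w$, gaps of $1$), which changes the estimates only by $O(s_k)=o(l_k^{5/6})$. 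Second, for lattice rectangles $\Lambda_1=[0,a]\times[0,\beta_1]$, $\Lambda_2=[0,a]\times[\alpha_2,b]$ one has $d(\Lambda\setminus\Lambda_1,\Lambda\setminus\Lambda_2)=\beta_1-\alpha_2+2$, not $\beta_1-\alpha_2$; this only gives you extra room and is precisely why $\delta_k$ carries the $-2$. Finally, the dichotomy ``$b>l_{k+1}$, else cut the other coordinate'' is not quite a dichotomy (one can have $a>l_k$ while $b\le l_{k+1}$), but cutting the longer side works uniformly in every case, so nothing breaks. You should also note explicitly that for small $k$ the statement is vacuous or trivial (either ${\mathbb F}_k={\mathbb F}_{k-1}$ or $\delta_k<0$), which is what lets you absorb the base cases.
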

The \emph{B-C approach} then establishes a simple recursive inequality
between the quantity $\gamma_k := \sup_{\Lambda\in{\mathbb F}_k} \gamma(\Lambda)$ on
scale $k$ and the same quantity on scale $k-1$ as follows.

Fix $\Lambda\in {\mathbb F}_k\setminus {\mathbb F}_{k-1}$ and
write it as $\Lambda = \Lambda_1 \cup \Lambda_2$ with $\Lambda_1, \Lambda_2\in {\mathbb F}_{k-1}$
satisfying the properties described in Lemma \ref{geom} above. Without
loss of generality we can assume that all the horizontal faces of $\Lambda_1$ and of
$\Lambda_2$ lay on the horizontal faces of $\Lambda$ except for the face orthogonal to the
first direction $\vec e_1$ and that, along that direction, $\Lambda_1$ comes
before $\Lambda_2$.
Set $\D\equiv \Lambda_1 \cap \Lambda_2$ and write, for definiteness,
$\D=[a_1,b_1]\times[a_2,b_2]$. Lemma \ref{geom} implies that
the width of $\D$ in the first direction, $b_1-a_1$, is at least
$\delta_k$.
Set also 
$$
I\equiv[a_1 + (b_1-a_1)/2,\ b_1]\times[a_2,b_2]
$$ 
and let
$\partial_{r}I=\{b_1\}\times[a_2,b_2]$
be the right face of $I$ along the first direction.
\begin{definition}
Given a configuration $\omega\in \Omega$ we will say that $\omega$ is $I$-good iff
there exists a top-bottom crossing of $I$.   
\end{definition}
Given $\tau\in {\rm Max}_\Lambda$, we run the following constrained ``block dynamics''
on $\Omega_\Lambda$ (in what follows, for simplicity, we suppress the index $i$)
with boundary conditions $\tau$ and blocks $B_1:=\Lambda_1\setminus I$, $B_2:=\Lambda_2$.  The block $B_2$ waits a mean one
exponential random time and then the current configuration inside it is
refreshed with a new one sampled from the Gibbs measure of the block
given the previous configuration outside it (and $\tau$ outside $\Lambda$). The block $ B_1$ does
the same but now the configuration is refreshed only if the current
configuration $\omega$ in $B$ is $I$-good (see Figure \ref{blocchi}).

\begin{figure}[ht]
\label{blocchi}
\psfrag{l1}{$B_1$}
\psfrag{l2}{$\!\!\!\!\!\!\! B_2=\Lambda_2$}
\psfrag{d}{$\Delta$}
\begin{center}
\includegraphics[width=.5\columnwidth]{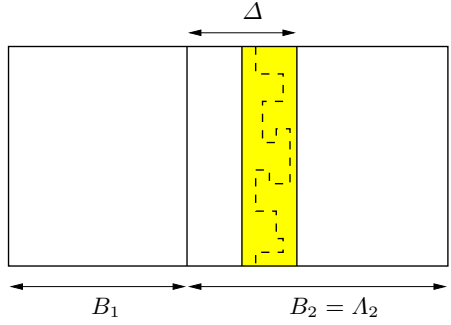}
\caption{
The rectangle $\Lambda$ divided into two blocks $B_1$ and $B_2$.
The grey region is the strip $I$ with a top-bottom crossing.}
\end{center}
\end{figure}

The generator of the
block dynamics applied to $f$ can be written as
\begin{equation}
\label{gen}
{\mathcal L}_{\rm block} f=c_1(\mu^{\Phi,\tau}_{ B_1}(f)-f)+\mu^{\Phi,\tau}_{B_2}(f)-f
\end{equation}
and the associated Dirichlet form is
$$
{\mathcal D}^{\Phi,\tau}_{\rm block}(f)=\mu^{\Phi,\tau}_\Lambda\left(c_1\mathop{\rm Var}\nolimits^{\Phi}_{ B_1}(f)+\mathop{\rm Var}\nolimits^{\Phi}_{B_2}(f)\right)
$$
where $c_1(\omega)$ is just the indicator of the event 
that $\omega$ is $I$-good.
\begin{remark}
  The reader should keep in mind that the e.g. the notation
  $$\mu^{\Phi,\tau}_\Lambda\bigl(c_1\mathop{\rm Var}\nolimits^{\Phi}_{ B_1}(f)\bigr)$$ stands for
  $\sum_{\xi}\mu^{\Phi,\tau}_\Lambda(\xi)c_1(\xi)\mathop{\rm Var}\nolimits^{\Phi,\xi}_{
    B_1}(f)$ and that one can imagine the sum restricted to those
  configurations outside $B_1$ that coicide with $\tau$ outside $\Lambda$ since
  otherwise their probability $\mu^{\Phi,\tau}_\Lambda(\xi)$ is zero.
\end{remark}
In order to study the mixing property of the chain we need
the following two lemmas.

\begin{Lemma}[\cite{SFlour}]
\label{SM}
Fix $(r,M)$ with $M<M_0$. Then, for any 
$\Phi\in {\mathcal B}_{r,M}$, 
\begin{eqnarray}
\sup_{\tau'}|\mu_{ B_1}^{\Phi,\tau'}(g)-\mu_\Lambda^{\Phi,\tau}(g)| & \le\,\lambda_k\,\ninf{g}\quad\forall
g:\Omega_{B_2^c}\mapsto {\mathbb R} \label{inA}\cr
\sup_{\tau'}|\mu_{B_2}^{\Phi,\tau'}(g)-\mu_\Lambda^{\Phi,\tau}(g)| & \le\,\lambda_k\,\ninf{g}\quad\forall
g:\Omega_{ B_1^c}\mapsto {\mathbb R}
\label{inB}
\end{eqnarray}
where $\lambda_k:= Kr l_{k+1}e^{-m\delta_k/2}$ and the constants $K,m$ are given in \eqref{strong mixing}. 
\end{Lemma}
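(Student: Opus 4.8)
The plan is to read off both inequalities from the uniform exponential mixing \eqref{strong mixing} together with the DLR property recalled above; the geometry of Lemma \ref{geom} enters only to guarantee that the block supporting $g$ sits at distance at least $\delta_k/2$ from the region where the two boundary conditions under comparison can disagree. Since $\lambda_k$ and the constants $K,m$ do not depend on $\Phi\in{\mathcal B}_{r,M}$, the resulting bounds are automatically uniform in $\Phi$. I would prove \eqref{inA}; \eqref{inB} follows the same way with $B_1$ and $B_2$ interchanged.

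First I would fix the geometry. By the normalisation made in the construction, the only faces of $\Lambda_1,\Lambda_2$ not lying on $\partial\Lambda$ are those orthogonal to $\vec e_1$, and $\Lambda_1$ precedes $\Lambda_2$ along $\vec e_1$; writing $\Lambda_1\cap\Lambda_2=[a_1,b_1]\times[a_2,b_2]$, this forces $\Lambda\setminus\Lambda_2=\Lambda\cap\{x_1\le a_1-1\}$ and, since $b_1$ is the right face of $\Lambda_1$ and $I=[a_1+\frac{b_1-a_1}2,b_1]\times[a_2,b_2]$ is the right half of $\Lambda_1\cap\Lambda_2$, also $\Lambda\setminus B_1=\Lambda\cap\{x_1\ge a_1+\frac{b_1-a_1}2\}$. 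A function $g:\Omega_{B_2^c}\to{\mathbb R}$ depends only on the spins in $B_2^c\cap\Lambda=\Lambda\setminus\Lambda_2\subseteq B_1$, so $\mu_{B_1}^{\Phi,\tau'}(g)$ and $\mu_\Lambda^{\Phi,\tau}(g)$ both make sense; moreover $\mathrm{supp}(g)\subseteq\{x_1\le a_1-1\}$ whereas $\Lambda\setminus B_1\subseteq\{x_1\ge a_1+\frac{b_1-a_1}2\}$, and since $b_1-a_1+2\ge\delta_k$ by Lemma \ref{geom}(ii) this yields $d\bigl(\mathrm{supp}(g),\Lambda\setminus B_1\bigr)\ge\frac{b_1-a_1}2+1\ge\delta_k/2$.

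Next I would use DLR to reduce to a single volume. Conditioning $\mu_\Lambda^{\Phi,\tau}$ on the configuration $\xi$ of $\Lambda\setminus B_1$ gives $\mu_\Lambda^{\Phi,\tau}(\cdot\,|\,\sigma_{\Lambda\setminus B_1}=\xi)=\mu_{B_1}^{\Phi,\xi}$ (with $\tau$ still on $\Lambda^c$, as is implicit in the notation $\mu_{B_1}^{\Phi,\tau'}$ inside the fixed volume $\Lambda$), hence
\[
\bigl|\mu_{B_1}^{\Phi,\tau'}(g)-\mu_\Lambda^{\Phi,\tau}(g)\bigr|\ \le\ \max_{\xi}\bigl|\mu_{B_1}^{\Phi,\tau'}(g)-\mu_{B_1}^{\Phi,\xi}(g)\bigr| ,
\]
with the two measures on the right being Gibbs measures on $B_1$ whose boundary conditions coincide on $\Lambda^c$ and differ only on $\Lambda\setminus B_1$. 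Now I would apply \eqref{strong mixing} with outer volume $B_1$ and inner set $\mathrm{supp}(g)$: the set $D_r$ of boundary sites within distance $r$ of $B_1$ on which the two boundary conditions disagree lies in $(\Lambda\setminus B_1)\cap\{y:0<d(y,B_1)\le r\}$, so $|D_r|\le r\,l_{k+1}$ (width $\le r$ times the $\vec e_2$-extent of $\Lambda$, which is $\le l_{k+1}$ since the bisection is along the longer side), while $d(\mathrm{supp}(g),D_r)\ge\delta_k/2$ by the previous step. Using positivity of the Gibbs weights on $\mathrm{supp}(g)$ together with \eqref{strong mixing},
\[
\bigl|\mu_{B_1}^{\Phi,\tau'}(g)-\mu_{B_1}^{\Phi,\xi}(g)\bigr|\le\ninf g\,\max_{\eta}\Bigl|\frac{\mu_{B_1}^{\Phi,\tau'}(\eta)}{\mu_{B_1}^{\Phi,\xi}(\eta)}-1\Bigr|\le K\,r\,l_{k+1}\,e^{-m\delta_k/2}\,\ninf g=\lambda_k\,\ninf g ,
\]
the maximum being over configurations $\eta$ on $\mathrm{supp}(g)$; this is \eqref{inA}. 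For \eqref{inB} one repeats the argument with $B_1\leftrightarrow B_2$: a function of $\Omega_{B_1^c}$ is supported on $B_1^c\cap\Lambda=(\Lambda\setminus\Lambda_1)\cup I\subseteq\{x_1\ge a_1+\frac{b_1-a_1}2\}$, whereas the two boundary conditions now differ only on $\Lambda\setminus B_2=\Lambda\setminus\Lambda_2\subseteq\{x_1\le a_1-1\}$, so the separation is again $\ge\delta_k/2$.

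The only step needing genuine care is the geometric one: one must check that every site carrying a truly different boundary condition lies on the far side of the overlap strip $\Lambda_1\cap\Lambda_2$ from $\mathrm{supp}(g)$, so that the exponential decay in \eqref{strong mixing} is driven by the full half-width $\delta_k/2$; when $\delta_k\le0$ the asserted inequality is anyway trivial, since $d(\mathrm{supp}(g),D_r)\ge0$ forces $e^{-m\,d(\mathrm{supp}(g),D_r)}\le1\le e^{-m\delta_k/2}$. Apart from this, the estimate is a mechanical consequence of DLR and of the uniform mixing bound, exactly as in \cite{SFlour,noi}.
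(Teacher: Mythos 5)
Your proof is essentially correct, and since the paper simply cites \cite{SFlour} for this lemma without giving its own argument, the comparison is really against the intended (and standard) route — which you reproduce faithfully: DLR conditioning on $\sigma_{\Lambda\setminus B_1}$ turns $\mu_\Lambda^{\Phi,\tau}(g)$ into a convex combination of $\mu_{B_1}^{\Phi,\tau\cdot\xi}(g)$, and then one invokes the uniform complete analyticity estimate \eqref{strong mixing} on the single volume $B_1$, with the geometry of Lemma \ref{geom} supplying the separation $d(\mathrm{supp}\,g,\Lambda\setminus B_1)\ge\delta_k/2$ that drives the exponent. The chain of inequalities
\[
\bigl|\mu^{\Phi,\tau'}_{B_1}(g)-\mu^{\Phi,\xi}_{B_1}(g)\bigr|
=\Bigl|\sum_\eta g(\eta)\,\mu^{\Phi,\xi}_{B_1}(\eta)\Bigl[\tfrac{\mu^{\Phi,\tau'}_{B_1}(\eta)}{\mu^{\Phi,\xi}_{B_1}(\eta)}-1\Bigr]\Bigr|
\le\ninf g\max_\eta\Bigl|\tfrac{\mu^{\Phi,\tau'}_{B_1}(\eta)}{\mu^{\Phi,\xi}_{B_1}(\eta)}-1\Bigr|
\]
is exactly right, and the cardinality estimate $|D_r|\le r\,l_{k+1}$ together with $d(\mathrm{supp}\,g,D_r)\ge\delta_k/2$ reproduces the stated $\lambda_k$. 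Your observation that $K,m$ depend only on $M,r,d$ gives the uniformity over $\Phi\in{\mathcal B}_{r,M}$ for free, and the remark about $\delta_k\le 0$ being a vacuous case is sound.

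One point you handle correctly but could state more forcefully: the inequality \eqref{inA} as literally written (with $\sup_{\tau'}$ over \emph{all} boundary conditions of $B_1$) would be false, because $\tau'$ could disagree with $\tau$ on $\Lambda^c$ right next to $\mathrm{supp}\,g$ and the separation argument collapses. The correct reading — which you parenthetically adopt and which is the only one compatible with the use made of the lemma in the proof of Proposition \ref{gapblock}, where the boundary data fed into $\mu_{B_1}$ always agrees with $\tau$ on $\Lambda^c$ — is that $\tau'$ varies only on $\Lambda\setminus B_1$. It is worth spelling this out, since it is precisely what makes $D_r(\tau',\xi)\subseteq\Lambda\setminus B_1$ and hence far from $\mathrm{supp}\,g$. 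With that caveat made explicit, your argument is complete.
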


\begin{Lemma}
\label{pippo}
There
exists $q_0\in (0,1)$ and for any $r<\infty$ there exists $M_1$ such that, for any
$M<\min(M_0,M_1)$ and $q\ge q_0$,
\begin{equation*}
\varepsilon_k:= \max_{\Phi\in {\mathcal B}_{r,M}}\max_{\tau}\,\mu_{B_2}^{\Phi,\tau}(\omega\text{ is not
  $I$-good }) \le e^{-\delta_k}.
\end{equation*}
\end{Lemma}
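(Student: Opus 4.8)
The plan is to prove Lemma~\ref{pippo} by reducing the absence of an $I$-good configuration to the absence of a percolation crossing and then controlling that event via a Peierls-type argument, using the strong mixing property \eqref{strong mixing} to pass from the product reference measure $\nu$ (where $q\approx 1$ makes zeros supercritical) to the interacting Gibbs measure $\mu_{B_2}^{\Phi,\tau}$. Recall that $\omega$ is $I$-good iff the strip $I$ has a top-bottom crossing of zeros; hence $\{\omega\text{ not }I\text{-good}\}$ is, by planar duality (Whitney's lemma / the standard argument, see \cite{Grimmett}), exactly the event that there is a $*$-path of sites with $\omega_x=1$ crossing $I$ in the horizontal direction, connecting its left and right faces. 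So it suffices to bound the probability of a horizontal $*$-crossing of $1$'s in $I$.

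First I would carry out the estimate for the product measure $\nu^{\otimes}$. The number of self-avoiding $*$-paths in $I$ of length $n$ crossing horizontally is at most $(\text{height of }I)\cdot 8^n$, and each has probability $(1-q)^n$ under $\nu^{\otimes}$; since the horizontal extent of $I$ is comparable to $l_{k+1}$ and its height is also $O(l_{k+1})$, a union bound gives
\begin{equation*}
\nu^{\otimes}(\omega\text{ not }I\text{-good})\le C\, l_{k+1}^2 \sum_{n\ge c\sqrt{l_k}} \bigl(8(1-q)\bigr)^n \le C' l_{k+1}^2\,\bigl(8(1-q)\bigr)^{c\sqrt{l_k}}
\end{equation*}
provided $q$ is close enough to $1$ that $8(1-q)<1$; this is certainly $\le e^{-2\delta_k}$ for $q\ge q_0$ with $q_0$ chosen suitably, since $\delta_k=\tfrac18\sqrt{l_k}-2$ and the stretched-exponential decay in $\sqrt{l_k}$ beats the polynomial prefactor. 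Next I would transfer this to $\mu_{B_2}^{\Phi,\tau}$: the event $\{\omega\text{ not }I\text{-good}\}$ is measurable with respect to the spins in $I\subset B_2$, and for each fixed $*$-path $\gamma$ of $1$'s the event $\{\omega_x=1\ \forall x\in\gamma\}$ depends only on $|\gamma|=n$ spins, so by iterating the bound \eqref{strong mixing} (or rather its immediate consequence that single-site conditional Gibbs probabilities are within a factor $1+o(1)$ of the $\nu$-probabilities, uniformly in the conditioning and the boundary condition, once $M<M_0$) one gets $\mu_{B_2}^{\Phi,\tau}(\omega_x=1\ \forall x\in\gamma)\le (1+\delta)^n (1-q)^n$ for a $\delta=\delta(M)$ that can be made as small as we like by shrinking $M$; choosing $M_1$ so small that $8(1+\delta)(1-q_0)<1$ and redoing the union bound yields $\varepsilon_k\le C' l_{k+1}^2 \bigl(8(1+\delta)(1-q)\bigr)^{c\sqrt{l_k}}\le e^{-\delta_k}$ for $q\ge q_0$, $M<\min(M_0,M_1)$, and all $k$ large; the finitely many small $k$ are handled by adjusting constants.

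The main obstacle I anticipate is making the transfer from $\nu^{\otimes}$ to $\mu_{B_2}^{\Phi,\tau}$ quantitatively clean: one must be careful that the relevant comparison constant multiplying the per-site factor $(1-q)$ is controlled \emph{uniformly in the path $\gamma$ and in $n$}, not just for a single site, so that it does not accumulate into an uncontrolled exponential. This is exactly where \eqref{strong mixing} is the right tool --- it gives ratios of finite-volume Gibbs probabilities that are $1+O(e^{-m\cdot\mathrm{dist}})$ \emph{uniformly in boundary conditions} --- but one has to phrase the argument so that conditioning the spins along $\gamma$ one at a time each costs only a factor $1+\delta$ with $\delta$ independent of how many sites have already been fixed; the finite range of $\Phi$ together with $M<M_0$ makes this work, and the interaction-dependence enters only through $\delta=\delta(M,r)\downarrow 0$ as $M\downarrow 0$, which is why $M_1$ depends on $r$. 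Everything else is a routine Peierls count.
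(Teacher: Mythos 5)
Your proposal is essentially the paper's argument: reduce ``not $I$-good'' by planar duality to a horizontal $*$-crossing of $1$'s in $I$, run a Peierls count, and note that each site along the crossing contributes a factor close to $1-q$ so that the stretched-exponential decay $\approx\rho^{c\sqrt{l_k}}$ (with $\rho$ small once $q_0$ is close to $1$ and $M_1$ is small) beats the prefactor and the target $e^{-\delta_k}$. The only substantive point to correct is the tool you invoke to pass from the product measure to the interacting Gibbs measure. You reach for the strong mixing estimate \eqref{strong mixing}, but that bounds the sensitivity of $\mu_\Lambda^{\Phi,\tau}$ to \emph{boundary conditions}, not its deviation from the product measure, and the ``immediate consequence'' you then appeal to is not in fact a consequence of \eqref{strong mixing}. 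What is actually needed, and what the paper uses, is the elementary uniform single-site bound
\begin{equation*}
\sup_{\Phi\in {\mathcal B}_{r,M}}\sup_\tau \mu_{\{x\}}^{\Phi,\tau}(\sigma_x=1)\le (1-q)e^{2M},
\end{equation*}
which follows directly from the definition of the one-site conditional Gibbs measure and $\|\Phi\|\le M$: the numerator is at most $(1-q)e^{M}$ and the partition function is at least $e^{-M}$. Because $\tau$ here is arbitrary, this bound can be applied sequentially to the sites of the dual path (fixing the already-visited spins into the boundary condition), giving the uniform per-site factor $(1-q)e^{2M}$ for the Peierls sum without any cross-site accumulation --- which is precisely the concern you correctly flag at the end of your write-up, but which is resolved by this one-line estimate rather than by \eqref{strong mixing}. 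With this substitution your argument is complete and identical in structure to the paper's.
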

\begin{proof}
It follows immediately from standard percolation arguments together
with 
$$
\sup_{\Phi\in {\mathcal B}_{r,M}}\sup_\tau\mu_{\{x\}}^{\Phi,\tau}(\sigma_x=1)\le (1-q)e^{2M}
$$
\qed \end{proof}
We can now state the main consequence of Lemma \ref{SM}, \ref{pippo}.
\begin{Proposition}
\label{gapblock}
There exists $q_0\in (0,1)$ and for any $r<\infty$ there exists $M_1$
such that, for any $M<\min(M_0,M_1)$ and $q\ge q_0$,
\begin{equation}
  \label{eq:inter5}
\gamma^{(k)}_{{\rm block}}:=\sup_{\Phi\in {\mathcal B}_{r,M}}\sup_{\tau\in {\rm
    Max}_\Lambda}\left(\mathop{\rm gap}\nolimits({\mathcal L}^{\Phi,\tau}_{\rm block})\right)^{-1} 
\le \left(1-8\sqrt{2\lambda_k+\varepsilon_k}\right)^{-1} 
\end{equation}
for all $k$ so large that the r.h.s. of \eqref{eq:inter5} is smaller than $2$.
\end{Proposition}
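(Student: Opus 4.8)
The plan is to bound the spectral gap of the two-block constrained chain \eqref{gen} by comparing it to the \emph{unconstrained} two-block dynamics (where $B_1$ also refreshes at rate one, without the $I$-good indicator), whose gap is controlled by the strong mixing estimates of Lemma \ref{SM}. Concretely, I would first recall the standard fact (see \cite{SFlour,Cesi}) that for the ordinary two-block dynamics with blocks $B_1$ (or rather $\Lambda_1$) and $B_2$ overlapping in $\D$, the Gibbs measure $\mu_\Lambda^{\Phi,\tau}$ enjoys a Poincar\'e inequality
$$
\mathop{\rm Var}\nolimits_\Lambda^{\Phi,\tau}(f)\le \frac{1}{1-c\,\lambda_k}\,\mu_\Lambda^{\Phi,\tau}\!\left(\mathop{\rm Var}\nolimits^\Phi_{\Lambda_1}(f)+\mathop{\rm Var}\nolimits^\Phi_{B_2}(f)\right)
$$
for a universal constant $c$ and $k$ large, the factor $1-c\lambda_k$ coming precisely from the two estimates \eqref{inA}--\eqref{inB}: when the two blocks are almost independent, resampling each in turn captures essentially all of the variance. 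The only subtlety is that our left block is $B_1=\Lambda_1\setminus I$, not $\Lambda_1$, so I would additionally absorb the strip $I$, writing $\mathop{\rm Var}\nolimits^\Phi_{\Lambda_1}(f)\le \mathop{\rm Var}\nolimits^\Phi_{B_1}(f)+\mathop{\rm Var}\nolimits^\Phi_{I}(f)$ by convexity of the variance under conditioning, and noting $\mathop{\rm Var}\nolimits^\Phi_I(f)$ is bounded by $\mathop{\rm Var}\nolimits^\Phi_{B_2}(f)$ plus a further $\lambda_k$-type error since $I\subset\Lambda_2=B_2$.

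Second — and this is the heart of the argument — I would handle the constraint $c_1$. The point is that the event $\{c_1=0\}=\{\omega\text{ not }I\text{-good}\}$ is, by Remark \ref{rem1} and the definition of $I$, measurable with respect to the configuration in $I\cup(\text{sites to its right})\subset B_2$; more importantly its $\mu_{B_2}^{\Phi,\tau}$-probability is at most $\varepsilon_k$ uniformly (Lemma \ref{pippo}). So on the complement of a small-probability set the constrained and unconstrained Dirichlet forms agree, and one pays only an $O(\sqrt{\varepsilon_k})$ price. The clean way to make this quantitative is the standard estimate: for any $f$,
$$
\mu_\Lambda^{\Phi,\tau}\!\left(\mathop{\rm Var}\nolimits^\Phi_{B_1}(f)\right)\le \mu_\Lambda^{\Phi,\tau}\!\left(c_1\mathop{\rm Var}\nolimits^\Phi_{B_1}(f)\right)+\mu_\Lambda^{\Phi,\tau}\!\left((1-c_1)\mathop{\rm Var}\nolimits^\Phi_{B_1}(f)\right),
$$
and to bound the last term one uses that $(1-c_1)$ depends only on coordinates in $B_2\supset I$ while $\mathop{\rm Var}\nolimits^\Phi_{B_1}(f)$ depends only on coordinates outside $B_1$ but one still needs to decouple: conditioning on everything outside $B_2$ one is left with $\mu_{B_2}^{\Phi,\cdot}(1-c_1)\le\varepsilon_k$ times a conditional expectation of $\mathop{\rm Var}\nolimits^\Phi_{B_1}(f)$, and then a Cauchy--Schwarz / canonical-paths step (exactly the one used to treat constrained block moves in \cite{noi}) converts the resulting term into $\sqrt{\varepsilon_k}$ times $\mathop{\rm Var}\nolimits_\Lambda^{\Phi,\tau}(f)$ plus $\sqrt{\varepsilon_k}$ times the full unconstrained Dirichlet form. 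Combining with the unconstrained Poincar\'e inequality and collecting the errors $\lambda_k$ and $\sqrt{\varepsilon_k}$ into the single quantity $\sqrt{2\lambda_k+\varepsilon_k}$, with the explicit numerical constant $8$ coming from tracking the Cauchy--Schwarz and triangle-inequality constants, yields \eqref{eq:inter5}; the hypothesis that the right-hand side be $<2$ just guarantees $1-8\sqrt{2\lambda_k+\varepsilon_k}>0$ so the bound is meaningful.

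The main obstacle, and the step I would be most careful about, is precisely the decoupling in the constrained term: one must justify exchanging the order of conditioning so that the small probability $\mu_{B_2}^{\Phi,\tau}(\text{not }I\text{-good})\le\varepsilon_k$ can be pulled out \emph{before} the (uncontrolled in $L^\infty$) quantity $\mathop{\rm Var}\nolimits^\Phi_{B_1}(f)$, and this is where the interaction $\Phi$ enters — one cannot simply factorize as in the product-measure case, and must instead invoke the strong mixing bound \eqref{strong mixing} again (this is what produces the extra $\lambda_k$ alongside $\varepsilon_k$ under the square root). Once this decoupling lemma is in place the rest is the bookkeeping of \cite{SFlour,noi}, so I would state and prove that decoupling estimate as a separate sub-lemma before assembling \eqref{eq:inter5}.
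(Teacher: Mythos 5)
Your plan is a genuine alternative to the paper's argument: you want to run the \emph{functional} route (prove the unconstrained two-block Poincar\'e inequality and then absorb the constraint $c_1$ as a small perturbation), whereas the paper proves \eqref{eq:inter5} by taking an eigenfunction $f$ of ${\mathcal L}_{\rm block}$ with eigenvalue $\lambda$, applying $\mu_{B_1}$ and then $\mu_{B_2}$ to the rewritten equation $f=(1+\lambda+c_1)^{-1}\mu_{B_2}(f)+c_1(1+\lambda+c_1)^{-1}\mu_{B_1}(f)$, and using Lemma \ref{SM} to derive the $L^\infty$ self-consistency relation $\ninf{\mu_{B_2}(f)}\le \ninf{\mu_{B_2}(f)}\bigl(\ninf{\mu_{B_2}(\tfrac{1}{1+\lambda+c_1})}+\sqrt{\lambda_k}\bigr)$, which forces $\lambda\le -1+8\sqrt{2\lambda_k+\varepsilon_k}$. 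The two routes are not equivalent, and yours has a gap at exactly the place you flag.

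The problem is the decoupling step. You write that conditioning on everything outside $B_2$ leaves ``$\mu_{B_2}^{\Phi,\cdot}(1-c_1)\le\varepsilon_k$ times a conditional expectation of $\mathop{\rm Var}\nolimits^{\Phi}_{B_1}(f)$''. This does not hold: \emph{both} $(1-c_1)$ and $\mathop{\rm Var}\nolimits^{\Phi}_{B_1}(f)$ are $\sigma(\omega_{B_2})$-measurable (indeed both depend on $\omega_I$), so under $\mu_{B_2}$ they are correlated random variables and the expectation of their product does not factorize even for a product reference measure, let alone a Gibbs one. If you instead invoke Cauchy--Schwarz, using $(1-c_1)\mathop{\rm Var}\nolimits_{B_1}(f)\le\mu_{B_1}\bigl((1-c_1)f^2\bigr)$ (valid since $c_1$ is $B_1^c$-measurable), you arrive at
$$
\mu_\Lambda^{\Phi,\tau}\bigl((1-c_1)\mathop{\rm Var}\nolimits^{\Phi}_{B_1}(f)\bigr)\le
\sqrt{\varepsilon_k}\,\sqrt{\mu_\Lambda^{\Phi,\tau}(f^4)}
\quad\text{or}\quad
\varepsilon_k\,\ninf{f}^2,
$$
and neither $\mu(f^4)$ nor $\ninf{f}^2$ is controlled by $\mathop{\rm Var}\nolimits_\Lambda^{\Phi,\tau}(f)$ for a general test function $f$ in a Poincar\'e inequality. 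This is precisely the $L^\infty$ versus $L^2$ mismatch, and it is the reason the paper works directly with the eigenvalue equation: for an eigenfunction, the relation $(1+\lambda)\mu_{B_1}(f)=\mu_{B_1}\bigl(\mu_{B_2}(f)\bigr)$ together with Lemma \ref{SM} produces the needed $L^\infty$ bounds on $\mu_{B_1}(f)$ and $\mu_{B_2}(f)$ ``for free'', so the constraint only enters through the scalar quantity $\ninf{\mu_{B_2}(\tfrac{1}{1+\lambda+c_1})}$, which is controlled by $\varepsilon_k$ via Lemma \ref{pippo}. The ``canonical paths'' step you cite from \cite{noi} is a different tool, used later in the proof of Theorem \ref{main theorem 01} to pass from block moves to single-spin moves; it does not help compare $\mu_\Lambda\bigl(\mathop{\rm Var}\nolimits_{B_1}(f)\bigr)$ to $\mu_\Lambda\bigl(c_1\mathop{\rm Var}\nolimits_{B_1}(f)\bigr)$. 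So while your overall outline is plausible-looking, I do not see how to close the decoupling sub-lemma as stated; you should either supply an $L^\infty$-type estimate on $f$ (which effectively means going to eigenfunctions, i.e., the paper's route) or find a structurally different way to exploit the smallness of $\{c_1=0\}$.
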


\begin{proof}[Proof of the proposition]
Fix $r,M$ and $\Phi$ as prescribed, let $\tau\in {\rm Max}_\Lambda$ and, in order to simplify the
notation, drop all the superscripts $\Phi,\tau$. 
Let $f:\Omega_\Lambda\mapsto {\mathbb R}$ be a mean zero function, the  
 eigenvalue equation associated to the generator (\ref{gen}) is
\begin{equation}\label{eigen}
c_1(\mu_{ B_1}(f)-f)+\mu_{B_2}(f)-f=\lambda f
\end{equation}
By construction $\lambda\ge -2$. 

Assume that $\lambda>-1+\sqrt{\lambda_k}$ since otherwise there is nothing to be
proved.  
By applying $\mu_{ B_1}$ to both
sides of (\ref{eigen}) and using \eqref{inA} we obtain
\begin{equation}\label{muA}
(1+\lambda)\mu_{ B_1}f=\mu_{ B_1}\bigl(\mu_{B_2}(f)\bigr) \quad
\Rightarrow \quad  \ninf{\mu_{ B_1}(f)}  \le \sqrt{\lambda_k}\,\ninf{\mu_{B_2}(f)}
\end{equation}
If we rewrite \eqref{eigen} as 
\begin{equation*}
  f= \frac{1}{1+\lambda+c_1}\mu_{B_2}(f) + \frac{c_1}{1+\lambda+c_1}\mu_{ B_1}(f)
\end{equation*}
and apply $\mu_{B_2}$ to both sides, by using (\ref{muA}) together with the assumption $\lambda>-1+\sqrt{\lambda_k}$, we get 
\begin{gather}
\label{muB}
\ninf{\mu_{B_2}(f)} \le \ninf{\mu_{B_2}(f)}\,
\ninf{\mu_{B_2}(\frac{1}{1+\lambda+c_1})} \cr
+ \lambda_k\ninf{\frac{c_1}{1+\lambda+c_1}}\ninf{\mu_{ B_1}(f)}\cr
\le \ninf{\mu_{B_2}(f)}\left( \ninf{\mu_{B_2}(\frac{1}{1+\lambda+c_1})}  +
\sqrt{\lambda_k} \right)
\end{gather}
which is possible only if 
$$
\ninf{\mu_{B_2}(\frac{1}{1+\lambda+c_1})} \ge 1-\sqrt{\lambda_k}
$$
{\it i.e. \ } 
$$
\lambda \le -1+8\sqrt{2\lambda_k+\epsilon_k}
$$
and the proof is complete.  
\qed \end{proof}
By writing down the standard Poincar\'e inequality for the
block auxiliary chain, we get that for any $f$
\begin{equation}
  \label{eq:s1}
  \mathop{\rm Var}\nolimits^{\Phi,\tau}_\Lambda(f)\le \gamma^{(k)}_{{\rm block}}\ \mu^{\Phi,\tau}_\Lambda\Bigl(c_1\mathop{\rm Var}\nolimits^{\Phi}_{ B_1}(f)+\mathop{\rm Var}\nolimits^{\Phi}_{B_2}(f)\Bigr)
\end{equation}
The second term in the r.h.s. of \eqref{eq:s1}, using the definition of $\gamma_k$ and the fact that
$B_2=\Lambda_2\in {\mathbb F}_{k-1}$ is bounded from above
by 
\begin{equation}
  \label{eq:s2}
  \mu^{\Phi,\tau}_\Lambda\Bigl(\mathop{\rm Var}\nolimits^{\Phi}_{B_2}(f)\Bigr)\le
  \gamma_{k-1}\sum_{x\in B_2}\mu^{\Phi,\tau}_\Lambda\bigl(c_{x,B_2} \mathop{\rm Var}\nolimits^{\Phi}_x(f)\bigr)
\end{equation}
Notice that, by construction, for all $x\in B_2$ and all $\omega$,
$c_{x,B_2}(\omega)=c_{x,\Lambda}(\omega)$.  Therefore the term
$\sum_{x\in B_2}\mu^{\Phi,\tau}_\Lambda\bigl(c_{x,B_2}
\mathop{\rm Var}\nolimits^{\Phi}_x(f)\bigr)$ is nothing but the
contribution carried by the set $B_2$ to the full Dirichlet form ${\mathcal D}^{\Phi,\tau}_\Lambda(f)$. 

Next we examine the more complicate term
$\mu^{\Phi,\tau}_\Lambda\Bigl(c_1\mathop{\rm Var}\nolimits^{\Phi}_{ B_1}(f)\Bigr)$. For any $\omega$
such that there exists a rightmost crossing $\Pi_\omega$ in $I$ denote by
$\Lambda_{\omega}$ the set of all sites in $\Lambda$ which are to the \emph{left} of $\Pi_\omega$.
Since
$\mathop{\rm Var}\nolimits^{\Phi}_{ B_1}(f)$ depends only on $\omega_{\Lambda\setminus  B_1}$ and,
for any top-bottom crossing $\Gamma$ of $I$,
${1 \mskip -5mu {\rm I}}_{\{\Pi_\omega=\Gamma\}}$ does not depend on the variables $\omega$'s to the
left of $\Gamma$, we can write
\begin{gather}
  \mu^{\Phi,\tau}_\Lambda\Bigl(c_1\mathop{\rm Var}\nolimits^{\Phi}_{{ B_1}}(f)\Bigr)=
  \mu^{\Phi,\tau}_\Lambda\Bigl({1 \mskip -5mu {\rm I}}_{\{\exists\, \Pi_\omega \text{ in }I\}}  \mu^{\Phi}_{\Lambda_\omega}\bigl(\mathop{\rm Var}\nolimits^{\Phi}_{{ B_1}}(f)\bigr)\Bigr)
\label{A}
\end{gather}
The convexity of the variance implies that 
\begin{equation*}
\mu^{\Phi}_{\Lambda_\omega}\bigl(\mathop{\rm Var}\nolimits^{\Phi}_{{ B_1}}(f)\bigr) \le 
\mathop{\rm Var}\nolimits^{\Phi}_{{\Lambda_\omega}}(f)
\end{equation*}
where it is understood that the r.h.s. depends on the variables in
$\Pi_\omega$ and to the right of it. The key observation at this stage,
which explains the role and the need of the event $\{\exists\ \Pi_\omega
\text{ in }I\}$,
is the following. For any $\omega$ such that $\Pi_\omega$ exists the
variance $\mathop{\rm Var}\nolimits^{\Phi}_{{\Lambda_\omega}}(f)$ is computed with boundary
conditions ($\tau$ outside $\Lambda$ and $\omega_{\Lambda\setminus \Lambda_\omega}$) which belong to ${\rm
  Max}_{\Lambda_\omega}$. Therefore we can bound it from above using the
Poincar\'e inequality by 
\begin{equation*}
\mathop{\rm Var}\nolimits^{\Phi}_{{\Lambda_\omega}}(f)\le   \gamma(\Lambda_\omega){\mathcal D}^{\Phi}_{\Lambda_\omega}(f) \le \gamma( B_1\cup I) {\mathcal D}^{\Phi}_{\Lambda_\omega}(f)
\end{equation*} 
where we used Lemma \ref{Mon} together with the observation that
$\Lambda_\omega\subset  B_1\cup I=\Lambda_1$. In conclusion 
\begin{gather}
\mu^{\Phi,\tau}_\Lambda\Bigl({1 \mskip -5mu {\rm I}}_{\{\exists\, \Pi_\omega \text{ in }I\}}
\mu^{\Phi}_{\Lambda_\omega}\bigl(\mathop{\rm Var}\nolimits^{\Phi}_{{ B_1}}(f)\bigr)\Bigr) \nonumber\cr
 \le \gamma(\Lambda_1) \mu^{\Phi,\tau}_\Lambda\Bigl({1 \mskip -5mu {\rm I}}_{\{\exists\, \Pi_\omega \text{
     in }I\}}{\mathcal D}^{\Phi}_{\Lambda_\omega}(f)\Bigr) \nonumber\cr
\le \gamma(\Lambda_1) \mu^{\Phi,\tau}_\Lambda\Bigl({1 \mskip -5mu {\rm I}}_{\{\exists\, \Pi_\omega \text{
     in }I\}}\sum_{x\in
  \Lambda_\omega}c_{x,\Lambda_\omega}\mathop{\rm Var}\nolimits_x^{\Phi}(f)\Bigr)\nonumber\cr
\le \gamma(\Lambda_1) \mu^{\Phi,\tau}_\Lambda\Bigl(\sum_{x\in  \Lambda_1}c_{x,\Lambda}\mathop{\rm Var}\nolimits_x^{\Phi}(f)\Bigr)
\label{B}\end{gather}
because, by construction, for every $\omega$ such that there exists $\Pi_\omega$
in $I$  
\begin{equation}
  \label{C}
c_{x,\Lambda_\omega}(\omega)=c_{x,\Lambda}(\omega)\quad \forall x\in \Lambda_\omega\,.
\end{equation} 
  
If we finally plug (\ref{B}) into the r.h.s. of (\ref{A})
and recall that $\Lambda_1\in {\mathcal F}_{k-1}$, 
we obtain
\begin{gather}
  \mu^{\Phi,\tau}_\Lambda\Bigl(c_1\mathop{\rm Var}\nolimits^{\Phi}_{{ B_1}}(f)\Bigr)
\le 
\gamma_{k-1}\,\mu^{\Phi,\tau}_\Lambda\bigl(\sum_{x\in
   \Lambda_1}c_{x,\Lambda}\mathop{\rm Var}\nolimits^{\Phi}_{x}(f)\bigr)
\label{D}
\end{gather}

In conclusion we have shown that
\begin{equation}
  \label{eq:2}
  \mathop{\rm Var}\nolimits^{\Phi,\tau}_\Lambda(f)\le
  \gamma^{(k)}_{\rm block}\gamma_{k-1}\Bigl({\mathcal D}^{\Phi,\tau}_\Lambda(f)+\sum_{x\in\D}\mu^{\Phi,\tau}_\Lambda\bigl(c_{x,\Lambda}\mathop{\rm Var}\nolimits_x(f)\bigr)\Bigr)
\end{equation}
Averaging over the $s_k= \lfloor l_k^{1/3} \rfloor$ possible choices of
the sets $\Lambda_1,\Lambda_2$ gives
\begin{equation}
  \label{eq:3}
  \mathop{\rm Var}\nolimits_\Lambda(f)\le \gamma^{(k)}_{\rm block}\gamma_{k-1}(1+\frac{1}{s_k}){\mathcal D}_\Lambda(f)
\end{equation}
which implies that
\begin{gather}
  \label{eq:4}
  \gamma_k\le (1+\frac{1}{s_k})\gamma^{(k)}_{\rm block}\gamma_{k-1}\hfill\break
\le \gamma_{k_0}\ \prod_{j=k_0}^k (1+\frac{1}{s_j})\gamma^{(j)}_{\rm block}
\end{gather}
where $k_0$ is the smallest integer such that $\gamma^{(k_0)}_{\rm
  block}<2$. 
If we now recall the expression \eqref{eq:inter5} for $\gamma^{(j)}_{\rm
  block}$ together with Lemma \ref{SM} and \ref{pippo}, we immediately conclude
that the product $\prod_{j=k_0}^\infty \gamma^{(j)}_{\rm
  block}(1+\frac{1}{s_j})$ is bounded.
\qed \end{proof}

\section{One spin facilitated model on a general graph}
\label{FA-1f general}
In this section we prove our second set of new results by examining the \emph{one spin facilitated} model
(FA-1f in short) on a general connected graph ${\mathcal G}=(V,E)$. Our
motivation comes from some  unpublished speculation by D. Aldous  \cite{Aldous2} that, in
this general setting, the FA-1f may
serve as an algorithm for information storage in dynamic graphs.   

We begin by discussing the finite setting.  Let $r$ be one of the
vertices and ${\mathcal T}$
be a rooted spanning tree of ${\mathcal G}$ with root $r$. On $\Omega=\{0,1\}^V$ consider the FA-1f constraints:
\begin{equation}
  \begin{cases}
  c_{x,{\mathcal G}}(\omega)=
    1 & \text{if $\omega_y=0$ for some neighbor $y$ of $x$} \label{FA1f-G}   \cr
 0 & \text{otherwise}
   \end{cases}
   \end{equation}
and let $\hat c_{x,{\mathcal G}}=c_{x,{\mathcal G}}$ if $x\neq r$ and $\hat
c_{r,{\mathcal G}}\equiv 1$. Let $\hat {\mathcal L}$ be the
corresponding  Markov generator and  notice that associated Markov
chain is ergodic since the vertex $r$ is unconstrained.  For shortness
we will refer in the sequel to $\hat {\mathcal L}$ as
the (${\mathcal G}$, $r$, FA-1f) model. Our first result
reads as follows.

\begin{Theorem}
\label{Fa-1f generale}
  \begin{equation*}
    \mathop{\rm gap}\nolimits({\mathcal G},r,\text{\rm FA-1f})\ge \mathop{\rm gap}\nolimits({\mathbb Z},\text{\rm East})
  \end{equation*}
\end{Theorem}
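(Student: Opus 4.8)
The plan is to reduce, by two monotonicity steps, to the East model and then to argue recursively on the tree. First, since $\mathcal{T}$ is a \emph{spanning} tree of $\mathcal{G}$ it has the same vertex set but fewer edges, hence each vertex has fewer neighbours and $c_{x,\mathcal{T}}\le c_{x,\mathcal{G}}$ for every $x$; thus the generator $\hat{\mathcal{L}}$ for $(\mathcal{G},r,\text{FA-1f})$ is dominated by $\hat{\mathcal{L}}$ for $(\mathcal{T},r,\text{FA-1f})$ in the sense of Section~\ref{main questions}, so $\mathop{\rm gap}\nolimits(\mathcal{G},r,\text{FA-1f})\ge\mathop{\rm gap}\nolimits(\mathcal{T},r,\text{FA-1f})$ and it is enough to treat the case in which $\mathcal{G}$ is itself a rooted tree. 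Second, recall the volume monotonicity of the East gap, $\mathop{\rm gap}\nolimits(\mathbb{Z},\text{East})=\inf_{n}\mathop{\rm gap}\nolimits(\text{East on }[1,n])$ (\cite[Lemma~2.11]{noi}), the interval models having the extremal site free; thus the target bound is uniform in $n$, and we are free to compare with East on arbitrarily long intervals. This already settles the base case of the induction below: when $\mathcal{T}$ is a single path rooted at an endpoint, $(\mathcal{T},r,\text{FA-1f})$ is the FA-1f model on an interval with one free endpoint, which is dominated by the East model on the same interval — exactly as in the Hamilton-path comparison examples of Section~\ref{main questions} — hence has gap at least $\mathop{\rm gap}\nolimits(\mathbb{Z},\text{East})$.

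For a general rooted tree I would induct on the number of vertices. Fix a root-to-leaf path (spine) $s_0=r,s_1,\dots,s_L$; then $V(\mathcal{T})$ is the disjoint union of the sets $\hat B_j:=\{s_j\}\cup\bigl(\bigcup_a V(\mathcal{T}_{j,a})\bigr)$, where $\{\mathcal{T}_{j,a}\}_a$ are the rooted subtrees hanging from $s_j$ (each rooted at a child $u_{j,a}\neq s_{j+1}$ of $s_j$), and every $\hat B_j$ has strictly fewer vertices than $\mathcal{T}$. Run the constrained block dynamics with blocks $\hat B_0,\dots,\hat B_L$ in which $\hat B_0$ is refreshed freely and, for $j\ge1$, $\hat B_j$ is refreshed from its product equilibrium measure whenever $\omega_{s_{j-1}}=0$. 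Each such block move is realisable by a finite sequence of \emph{legal} FA-1f moves: once $s_{j-1}$ is empty we may empty $s_j$, this unblocks all the roots $u_{j,a}$, so each $\mathcal{T}_{j,a}$ can be driven to an arbitrary configuration and $s_j$ then restored. Consequently the block chain is an East model on $L+1$ super-sites — the good event at super-site $j-1$ being $\{\omega_{s_{j-1}}=0\}$, of probability $q$, with the leftmost super-site free — and its gap is $\ge\mathop{\rm gap}\nolimits(\text{East on }[0,L])\ge\mathop{\rm gap}\nolimits(\mathbb{Z},\text{East})$. One writes the block Poincar\'e inequality and then, inside each $\hat B_j$, uses that conditionally on $\{\omega_{s_{j-1}}=0\}$ and on the spins outside $\hat B_j$ the induced chain is precisely the $(\hat B_j,s_j,\text{FA-1f})$ model on a smaller tree, to which the induction hypothesis applies; since the block constraints and the conditional $\hat B_j$-constraints are all dominated by the genuine FA-1f constraints on $\mathcal{T}$, and the $\hat B_j$ partition $V(\mathcal{T})$, this reassembles into a bound of $\mathop{\rm Var}\nolimits(f)$ by $\mathcal{D}_{\text{FA-1f},\mathcal{T}}(f)$.

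The main obstacle is to perform this recursion \emph{without losing a positive power of the gap at each level}: composing a constrained block step with the within-block inequalities gives, a priori, only $\mathop{\rm gap}\nolimits(\mathcal{T})\ge\mathop{\rm gap}\nolimits(\text{block chain})\cdot\mathop{\rm gap}\nolimits(\text{blocks})$, which iterated over the depth degrades to an exponentially small constant rather than the sharp $\mathop{\rm gap}\nolimits(\mathbb{Z},\text{East})$. The two features that make a loss-free recursion possible are: (i) the East gap on an interval does not depend on the length, so the spine may be taken arbitrarily long at no cost; and (ii) FA-1f is \emph{non-cooperative} and the root is free, so a whole block can be equilibrated by legal moves paid for by a \emph{single} empty site at its interface with the previous block — and by enlarging that interface, allowing a block to be unblocked by any empty site in a window whose width grows with the recursion level, one makes the block-chain constraints have probability arbitrarily close to one, hence the block-chain gap arbitrarily close to one, so that the multiplicative corrections telescope to a finite product, exactly as in the Bisection-Constrained mechanism underlying the East-model estimates of~\cite{noi}. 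Verifying that these enlarged block moves are still realisable by legal FA-1f moves, and that the associated ``generalised East'' block chain still has its gap controlled uniformly by $\mathop{\rm gap}\nolimits(\mathbb{Z},\text{East})$, is the technical heart of the argument; the extension to an infinite $\mathcal{G}$ is then obtained, as in Section~\ref{main questions}, by exhausting $\mathcal{G}$ with finite balls.
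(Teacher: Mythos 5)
Your first monotonicity step (pass to a spanning tree) is exactly the paper's, and your identification of the base case (a path) via domination by East on an interval is also right. But the recursion you then propose goes down a genuinely different and, as you yourself sense, problematic road, and you omit the key intermediate step that makes the paper's recursion lossless.

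The paper inserts a second domination \emph{before} recursing: replace FA-1f on the rooted tree ${\mathcal T}$ by the \emph{East model on ${\mathcal T}$}, in which $x\neq r$ may flip iff its \emph{ancestor} is empty, and $r$ is always free. This is a strictly stronger constraint ($\tilde c_{x,{\mathcal T}}\le \hat c_{x,{\mathcal T}}$), so $\mathop{\rm gap}\nolimits({\mathcal T},r,\text{FA-1f})\ge\mathop{\rm gap}\nolimits({\mathcal T},r,\text{East})$, and it has the crucial structural feature that the rate at a vertex only looks \emph{upward} in the tree. The recursion then never uses block dynamics at all. One picks a vertex $v$ of degree $\ge 3$ on the path $\Gamma_v$ from $r$, a child $a$ of $v$, sets $A:=\Gamma_v\cup{\mathcal T}_a$ and $B:={\mathcal T}\setminus{\mathcal T}_a$, and observes that the space ${\mathcal H}_B$ of functions not depending on the spins in ${\mathcal T}_a$ is an \emph{invariant subspace} for the East-on-${\mathcal T}$ generator, precisely because no vertex of $B$ has its constraint looking into ${\mathcal T}_a$. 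Restricted to ${\mathcal H}_B$ one gets $\mathop{\rm gap}\nolimits(B,r,\text{East})$; on ${\mathcal H}_B^\perp$ a mean-zero $f$ has $\mu_{{\mathcal T}_a}(f)=0$, so $\mathop{\rm Var}\nolimits(f)=\mu(\mathop{\rm Var}\nolimits_{{\mathcal T}_a}(f))\le\mu(\mathop{\rm Var}\nolimits_A(f))$ and the Poincar\'e inequality on $A$ finishes. This yields $\mathop{\rm gap}\nolimits({\mathcal T},r,\text{East})\ge\min\bigl(\mathop{\rm gap}\nolimits(A,r,\text{East}),\mathop{\rm gap}\nolimits(B,r,\text{East})\bigr)$ with \emph{no multiplicative loss whatsoever}, and iterating reduces to paths.

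Note that the invariant-subspace argument genuinely requires the East-on-tree constraints: for FA-1f on ${\mathcal T}$, the rate at $v\in\Gamma_v$ also looks at its children in ${\mathcal T}_a$, so ${\mathcal H}_B$ is \emph{not} invariant, which is why working directly with FA-1f on the tree, as your spine/block decomposition does, forces you into the composition-of-gaps route. You correctly diagnose that this route loses a factor at every level; the fix you sketch (enlarge the interface windows so block constraints have probability near $1$ and the loss factors telescope, \`a la Bisection-Constrained) is plausible as a way to get \emph{some} positive lower bound, but it is not carried out, and even if it were it would still produce a strictly positive constant prefactor $C<1$, i.e.\ $\mathop{\rm gap}\nolimits({\mathcal G},r,\text{FA-1f})\ge C\,\mathop{\rm gap}\nolimits({\mathbb Z},\text{East})$ rather than the theorem's clean inequality without constant. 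The missing idea is therefore the strengthened East constraint on the tree together with the exact spectral (invariant-subspace) decomposition, which sidesteps block dynamics and its unavoidable losses entirely.
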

\begin{proof}
By monotonicity $\hat c_{x,{\mathcal G}}(\omega)\ge \hat c_{x,{\mathcal T}}(\omega)$ and therefore
$\mathop{\rm gap}\nolimits({\mathcal G},r,\text{FA-1f})\ge \mathop{\rm gap}\nolimits({\mathcal T}, r,\text{FA-1f})$.  We can push the monotonicity
argument a bit further and consider the following $\left({\mathcal T},r,\text{\rm East}\right)$ model:
\begin{equation}
  \tilde c_{x,{\mathcal T}}(\omega)=
  \begin{cases}
    1 & \text{if either $x=r$ or $\omega_y=0$, where $y$ is the ancestor (in
      ${\mathcal T}$) of
      $x$}\label{East-T}\cr
    0 & \text{otherwise}
  \end{cases}
\end{equation}
Clearly $\hat c_{x,{\mathcal T}}(\omega)\ge \tilde c_{x,{\mathcal T}}(\omega)$ and therefore
$\mathop{\rm gap}\nolimits({\mathcal G},r,\text{\rm FA-1f})\ge \mathop{\rm gap}\nolimits({\mathcal T},r,\text{East})$. We will now proceed
to show that
\begin{equation}
  \label{eq:2bis}
\mathop{\rm gap}\nolimits({\mathcal T},r,\text{East})\ge \mathop{\rm gap}\nolimits({\mathbb Z},\text{East})
\end{equation}
If all the vertices of ${\mathcal T}$ have degree $2$ with the exception of the
root and the leaves, {\it i.e. \ } if ${\mathcal T}\subset {\mathbb Z}$, then \eqref{eq:2bis} follows
from  \cite[Lemma 2.11]{noi}. Thus let us assume that there
exists $x\in{\mathcal T}$ with $\D_x\ge 3$ and let us order the vertices of
${\mathcal T}$ by first assigning some
arbitrary order to all vertices belonging to any given layer ($\equiv$
same distance from the root) and then declaring $x<y$ iff either
$d(x,r)<d(y,r)$ or $d(x,r)=d(y,r)$ and $x$ comes before $y$ in the order
assigned to their layer. Let $v$ be equal to the
root if $\D_r\ge 2$ or equal to the first descendant of $r$ with
degree $\D_v\ge 3$ otherwise and let $\Gamma_v=\{r,v_1,\dots,v_k,v\}$ be the path in ${\mathcal T}$ leading from $r$
to $v$. Let $a$ be a child of $v$ and let ${\mathcal T}_a=(V_a,E_a)$ be the subtree of ${\mathcal T}$
rooted in $a$. Finally we denote by $A$ and $B$ the two subgraphs of
${\mathcal T}$: $A:=\Gamma_v\cup {\mathcal T}_a$, $B:={\mathcal T}\setminus {\mathcal T}_a$.
(see Fig \ref{Fig1}).

\begin{figure}[ht]
\label{Fig1}
\centering
\psfrag{a}{$a$}
\psfrag{A}{$A$}
\psfrag{B}{$B$}
\psfrag{r}{$r$}
\psfrag{v}{$v$}
\psfrag{v1}{$v_1$}
\psfrag{gv}{$\Gamma_v$}
\includegraphics[width=.5\columnwidth]{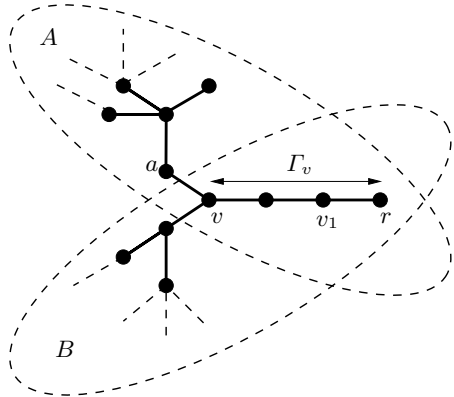}
\caption{The subtrees A and B}
\end{figure}

\begin{Lemma}
\label{lemma East}
\begin{equation}
  \label{eq:1}
\mathop{\rm gap}\nolimits({\mathcal T},r,\text{\rm East})\ge
\min\left(\mathop{\rm gap}\nolimits(A,r,\text{\rm East}),\mathop{\rm gap}\nolimits(B,r,\text{\rm East})\right)
\end{equation}
\end{Lemma}
By recursively applying  the above result to $A$ and $B$ separately, we immediately reduce
ourselves to the case of a tree ${\mathcal T}'\subset {\mathbb Z}$ and the proof of the
theorem is complete.

\qed 
\end{proof}

\begin{proof}[of Lemma \ref{lemma East}]
In $L^2(\Omega,\mu)$ consider the set ${\mathcal H}_B$ of functions $f$ that do not
depend on $\omega_x, \ x\in {\mathcal T}_a$.  Because of the choice of the
constraints $\tilde c_{x,{\mathcal T}}(\omega)$, ${\mathcal H}_B$ is an invariant subspace for the
generator of the $\left({\mathcal T},r,\text{\rm East}\right)$ model and 
\begin{equation}
  \label{eq:4bis}
\inf_{\substack {f\in {\mathcal H}_B \hfill\break \mu(f)=0}}\frac{\tilde {\mathcal D}(f)}{\mathop{\rm Var}\nolimits(f)}= \mathop{\rm gap}\nolimits(B,r,\text{East})
\end{equation}
Let us now consider the orthogonal subspace ${\mathcal H}^{\perp}_B$. Any zero
mean element
$f\in {\mathcal H}^{\perp}_B$ satisfies $\mu_{{\mathcal T}_a}(f)=0$ and therefore we can write
\begin{gather}
  \mathop{\rm Var}\nolimits(f)=\mu\left(\mathop{\rm
      Var}\nolimits_{{\mathcal T}_a}(f)\right) \le\mu\left(\mathop{\rm
      Var}\nolimits_{A}(f)\right) \nonumber\cr \le
  \mathop{\rm gap}\nolimits(A,r,\text{East})^{-1}\sum_{x\in A}\mu\left(\tilde c_{x,A}\mathop{\rm Var}\nolimits_x(f)\right)
  \nonumber \cr
\le   \mathop{\rm gap}\nolimits(A,r,\text{East})^{-1}\tilde {\mathcal D}(f)
  \label{eq:3bis}
\end{gather}
where the first inequality follows from convexity of the variance and
the second one is nothing but the Poincar\'e inequality for the East
model in $A$. The proof of the Lemma follows at once from \eqref{eq:4bis}, \eqref{eq:3bis}.
\qed 
\end{proof}

Theorem \ref{Fa-1f generale} has two consequences that will be the
content of the following Theorems. The first one deals with the case
of an infinite graph. The second one deals with the FA-1f model on
general graph ${\mathcal G}$ without the special unblocked vertex $r$
but with the Markov chain restricted to a suitable ergodic
component. 

\begin{Theorem}
Let ${\mathcal G}_\infty$ be an infinite connected graph of bounded degree and
let ${\mathcal L}$ be the generator of the FA-1f model on ${\mathcal G}_\infty$ with
constraints $\{c_{x,{\mathcal G}_\infty},\ x\in V_\infty\}$, {\it i.e. \ } no apriori unblocked vertex. Then 
  \begin{equation*}
    \mathop{\rm gap}\nolimits({\mathcal G}_\infty,\text{\rm FA-1f})\ge \mathop{\rm gap}\nolimits({\mathbb Z},\text{\rm East})
  \end{equation*}
\end{Theorem}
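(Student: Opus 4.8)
The plan is to deduce the infinite-volume bound from the finite-volume Theorem~\ref{Fa-1f generale} by exactly the finite-volume approximation used above to bound $\mathop{\rm gap}\nolimits(\mathcal L)$ by its finite-volume analogues. Fix an arbitrary vertex $r\in V_\infty$ and, for $n\ge 1$, let $\mathcal G_{n,r}$ be the ball of radius $n$ around $r$. Since $\mathcal G_\infty$ has bounded degree, $\mathcal G_{n,r}$ is a finite connected graph, and since $\mathcal G_\infty$ is infinite the associated boundary set $\mathcal B_n$ is non-empty; consequently the inner boundary $U_n:=\{x\in \mathcal G_{n,r}:\ x \text{ is adjacent to some vertex of }\mathcal B_n\}$ is non-empty as well. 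The first step (the only mildly delicate one) is to identify $\mathcal L^{\rm max}_{\mathcal G_{n,r}}$: for the FA-1f constraints, the maximal boundary condition places empty (good) spins on all of $\mathcal B_n$, which makes exactly the vertices of $U_n$ trivially satisfied and leaves the FA-1f constraint of every other vertex of $\mathcal G_{n,r}$ unchanged (computed inside $\mathcal G_{n,r}$). Hence $\mathcal L^{\rm max}_{\mathcal G_{n,r}}$ is precisely the generator of the FA-1f model on the connected graph $\mathcal G_{n,r}$ with every vertex of $U_n$ declared unconstrained; this chain is irreducible because FA-1f is non-cooperative (from any configuration one first empties a vertex of $U_n$ and then propagates emptiness along $\mathcal G_{n,r}$).

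Next I would chain two comparisons. Pick any $r_0\in U_n$. The $(\mathcal G_{n,r},r_0,\text{FA-1f})$ model of Theorem~\ref{Fa-1f generale} has the same rates as $\mathcal L^{\rm max}_{\mathcal G_{n,r}}$ except that only $r_0$, rather than all of $U_n$, is unconstrained; since $r_0\in U_n$, its rates are pointwise $\le$ those of $\mathcal L^{\rm max}_{\mathcal G_{n,r}}$, so by monotonicity of $\mathop{\rm gap}\nolimits$ under domination of the constraints one gets $\mathop{\rm gap}\nolimits(\mathcal L^{\rm max}_{\mathcal G_{n,r}})\ge \mathop{\rm gap}\nolimits(\mathcal G_{n,r},r_0,\text{FA-1f})$. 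Applying Theorem~\ref{Fa-1f generale} to the connected graph $\mathcal G_{n,r}$ with root $r_0$ gives $\mathop{\rm gap}\nolimits(\mathcal G_{n,r},r_0,\text{FA-1f})\ge\mathop{\rm gap}\nolimits(\mathbb Z,\text{East})$. Therefore $\inf_{n}\mathop{\rm gap}\nolimits(\mathcal L^{\rm max}_{\mathcal G_{n,r}})\ge\mathop{\rm gap}\nolimits(\mathbb Z,\text{East})>0$.

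Finally I would run the finite-volume approximation (Liggett, Ch.~4, \cite{Liggett}) as above. Given a zero-mean $f\in\mathrm{Dom}(\mathcal L)$, pick local functions $f_m$ with $f_m\to f$ and $\mathcal L f_m\to \mathcal L f$ in $L^2(\mu)$, so that $\mathop{\rm Var}\nolimits(f_m)\to\mathop{\rm Var}\nolimits(f)$ and $\mathcal D(f_m)\to\mathcal D(f)$. Choosing $n$ large enough (depending on $f_m$) that $f_m$ depends only on spins at distance $\le n-1$ from $r$, every vertex $x$ with $\mathop{\rm Var}\nolimits_x f_m\neq 0$ and all its neighbours lie in $\mathcal G_{n,r}$ and $x\notin U_n$, so that $\mathop{\rm Var}\nolimits(f_m)=\mathop{\rm Var}\nolimits_{\mathcal G_{n,r}}(f_m)$ and $\mathcal D(f_m)=\mathcal D^{\rm max}_{\mathcal G_{n,r}}(f_m)$ (the maximal boundary condition only relaxes constraints at vertices too far from the support of $f_m$ to contribute). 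Hence $\mathcal D(f_m)/\mathop{\rm Var}\nolimits(f_m)\ge\mathop{\rm gap}\nolimits(\mathcal L^{\rm max}_{\mathcal G_{n,r}})\ge\mathop{\rm gap}\nolimits(\mathbb Z,\text{East})$, and letting $m\to\infty$ yields $\mathcal D(f)\ge\mathop{\rm gap}\nolimits(\mathbb Z,\text{East})\,\mathop{\rm Var}\nolimits(f)$; taking the infimum over $f$ gives the claim.

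The only genuine obstacle is the first step: verifying that $\mathcal L^{\rm max}_{\mathcal G_{n,r}}$ really is the FA-1f model on $\mathcal G_{n,r}$ with $U_n$ unblocked (so that Theorem~\ref{Fa-1f generale} can be invoked), that $U_n\neq\emptyset$ — which is exactly where infiniteness of $\mathcal G_\infty$ enters — and that the resulting chain is irreducible. Everything else is a mechanical concatenation of the domination remark, Theorem~\ref{Fa-1f generale}, and the finite-volume approximation already carried out in the text.
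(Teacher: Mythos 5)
Your proposal is correct and follows exactly the route the paper takes: the paper's proof is the single sentence ``combine Theorem~\ref{Fa-1f generale} with the finite subgraph approximation of Section~\ref{main questions},'' and your write-up is a careful unpacking of that sentence, including the small but necessary verification that $\mathcal L^{\rm max}_{\mathcal G_{n,r}}$ dominates the $(\mathcal G_{n,r}, r_0, \text{FA-1f})$ generator for any $r_0$ in the (non-empty) inner boundary.
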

\begin{proof}
The proof combines Theorem \ref{Fa-1f generale} together with the finite subgraph
approximation described in section \ref{main questions}.  
\qed \end{proof}

\begin{Theorem}
Let ${\mathcal G}$ be as in Theorem  \ref{Fa-1f generale} and let ${\mathcal L}^+$ be
the FA-1f generator with constraints $\{c_{x,{\mathcal G}}\}_{x\in V}$ on the restricted configuration space $\Omega^+:=\{\eta\in
\Omega:\ \sum_{x\in V}(1-\eta_x)\ge 1\}$ equipped with the reversible measure
$\mu^+:=\mu(\cdot\thinspace | \thinspace\Omega^+)$. Then
  \begin{equation*}
\mathop{\rm gap}\nolimits({\mathcal L}^+)  \ge   \frac 12 \mathop{\rm gap}\nolimits({\mathbb Z},\text{\rm East})\mu(\Omega^+)
  \end{equation*}
\end{Theorem}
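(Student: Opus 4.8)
The plan is to transfer the Poincar\'e inequality of Theorem~\ref{Fa-1f generale} from the $(\mathcal{G},r,\text{FA-1f})$ generator $\hat{\mathcal{L}}$ to $\mathcal{L}^+$ by a short comparison, the only genuinely new point being the removal of the artificially unblocked root.

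\emph{Step 1 (state space and the extension).} I would first record that $\Omega\setminus\Omega^+$ is the single configuration $\mathbf{1}\equiv(1,\dots,1)$, which is blocked for the FA-1f dynamics since $c_{x,\mathcal{G}}(\mathbf{1})=0$ for all $x$, and that $\Omega^+$ is absorbing because a legal flip of a $0$ into a $1$ requires an adjacent $0$, which survives the flip; thus $\mathcal{L}^+$ is just the FA-1f generator restricted to its unique non-trivial ergodic component. Given a nonconstant $f:\Omega^+\to\mathbb{R}$ with $\mu^+(f)=0$ (WLOG), extend it by $\bar f(\mathbf{1}):=0$. Since $c_{x,\mathcal{G}}$ vanishes at $\mathbf{1}$ and, whenever $c_{x,\mathcal{G}}(\omega)=1$, both configurations obtained from $\omega$ by modifying $\omega_x$ still lie in $\Omega^+$, one gets the exact identities $\mathop{\rm Var}\nolimits_\mu(\bar f)=\mu(\Omega^+)\mathop{\rm Var}\nolimits_{\mu^+}(f)$ and $\sum_{x\in V}\mu\bigl(c_{x,\mathcal{G}}\mathop{\rm Var}\nolimits_x(\bar f)\bigr)=\mu(\Omega^+)\,\mathcal{D}^+(f)$, where $\mathcal{D}^+$ is the Dirichlet form of $\mathcal{L}^+$. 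Hence $\mathcal{D}^+(f)/\mathop{\rm Var}\nolimits_{\mu^+}(f)$ coincides with $\bigl(\sum_{x}\mu(c_{x,\mathcal{G}}\mathop{\rm Var}\nolimits_x(\bar f))\bigr)\big/\mathop{\rm Var}\nolimits_\mu(\bar f)$ and the factors $\mu(\Omega^+)$ cancel.

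\emph{Step 2 (comparison with $\hat{\mathcal{L}}$).} The Dirichlet form of the $(\mathcal{G},r,\text{FA-1f})$ model is $\hat{\mathcal{D}}(\bar f)=\sum_{x\in V}\mu(c_{x,\mathcal{G}}\mathop{\rm Var}\nolimits_x(\bar f))+E$ with $E:=\mu\bigl((1-c_{r,\mathcal{G}})\mathop{\rm Var}\nolimits_r(\bar f)\bigr)$, the term $E$ being exactly the price of leaving $r$ unconstrained. By Theorem~\ref{Fa-1f generale}, $\mathop{\rm Var}\nolimits_\mu(\bar f)\le\mathop{\rm gap}\nolimits({\mathbb Z},\text{\rm East})^{-1}\hat{\mathcal{D}}(\bar f)$, so Step~1 gives
$$\mathop{\rm gap}\nolimits(\mathcal{L}^+)\ \ge\ \mathop{\rm gap}\nolimits({\mathbb Z},\text{\rm East})\,\Bigl(1-\sup_{\bar f}\frac{E}{\hat{\mathcal{D}}(\bar f)}\Bigr),$$
and the theorem reduces to proving $E\le\bigl(1-\tfrac12\mu(\Omega^+)\bigr)\hat{\mathcal{D}}(\bar f)$, or equivalently $E\le\bigl(2\mu(\Omega^+)^{-1}-1\bigr)\sum_{x}\mu(c_{x,\mathcal{G}}\mathop{\rm Var}\nolimits_x(\bar f))$.

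\emph{Step 3 (the crux: estimating $E$).} The weight $1-c_{r,\mathcal{G}}(\omega)$ is the indicator that every neighbour of $r$ is occupied. I would split this event into the two exceptional configurations $\mathbf{1}$ and $e_r$ (the one empty only at $r$), on which the would-be root flip just toggles $\mathbf{1}\leftrightarrow e_r$ and where the choice $\bar f(\mathbf{1})=0$ together with an elementary bound on $\bar f(e_r)^2$ in terms of $\mathop{\rm Var}\nolimits_{\mu^+}(f)$ and $\mu^+(e_r)$ absorbs their contribution, and the remaining configurations, each of which still carries a $0$ at graph distance $\ge2$ from $r$. For the latter the idea is that such a $0$ can be walked towards $r$ along a shortest path by a finite sequence of legal FA-1f moves performed inside $\Omega^+$ (each intermediate flip of a site $z$ being legal because a neighbour of $z$ is empty at that moment), the root flip carried out once a neighbour of $r$ has been vacated, and the $0$ walked back; a comparison-of-Dirichlet-forms argument along the lines of \cite{Saloff} then charges $(\bar f(\omega^{r\to 0})-\bar f(\omega^{r\to 1}))^2$ to the terms $\mu(c_{x,\mathcal{G}}\mathop{\rm Var}\nolimits_x(\bar f))$ of the sites visited, which already occur in $\sum_x\mu(c_{x,\mathcal{G}}\mathop{\rm Var}\nolimits_x(\bar f))$. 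I expect this to be the main obstacle: the delicate point is to organize the canonical paths and the attendant congestion estimate so that the constant produced is dominated by $2\mu(\Omega^+)^{-1}-1$, exploiting that the configurations unfavourable to $r$, ultimately responsible for the loss, have total weight $1-\mu(\Omega^+)=(1-q)^{|V|}$. Granting this estimate on $E$, feeding it back into Step~2 yields the claimed bound.
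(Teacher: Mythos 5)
Your Steps~1 and 2 are correct and in fact a cleaner bookkeeping of the reduction than the paper's (the exact identities $\mathop{\rm Var}\nolimits_\mu(\bar f)=\mu(\Omega^+)\mathop{\rm Var}\nolimits_{\mu^+}(f)$ and $\sum_x\mu(c_{x,\mathcal{G}}\mathop{\rm Var}\nolimits_x(\bar f))=\mu(\Omega^+)\mathcal{D}^+(f)$ both hold, as you claim, and your algebra correctly reduces the theorem to bounding the root term $E=\mu\bigl((1-c_{r,\mathcal{G}})\mathop{\rm Var}\nolimits_r(\bar f)\bigr)$ by $\bigl(2\mu(\Omega^+)^{-1}-1\bigr)\sum_x\mu(c_{x,\mathcal{G}}\mathop{\rm Var}\nolimits_x(\bar f))$). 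The gap is Step~3, which you yourself flag as the obstacle: this is where the whole proof lives, and the canonical-path scheme you sketch is unlikely to close it. Walking a zero along a shortest path to $r$ and back, then applying the comparison theorem of Saloff--Coste, produces a congestion/length factor that typically grows with the diameter and degree of $\mathcal{G}$, whereas the constant you need, $2\mu(\Omega^+)^{-1}-1$, is uniform over all graphs. The smallness of $1-\mu(\Omega^+)=(1-q)^{|V|}$ does not straightforwardly rescue this, because the geometric factors and the probability factors do not cancel term-by-term in the standard congestion estimate.

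The paper's argument for $E$ is structurally different and avoids paths entirely. After reducing (as in Theorem~\ref{Fa-1f generale}) to a rooted tree $\mathcal{T}$, it chooses the extension $\tilde f(\mathbf{1})=f(e_r)$ (rather than $\bar f(\mathbf{1})=0$), so that $\mathop{\rm Var}\nolimits_r(\tilde f)$ already vanishes on the event that every $y\neq r$ is occupied, and hence $E=\mu\bigl(\chi_{\{\exists\,y\neq r:\,\eta_y=0\}}\mathop{\rm Var}\nolimits_r(f)\bigr)$. It then orders the vertices of $\mathcal{T}$ from the leaves inward, conditions on the \emph{minimal} empty site $\xi$, and uses convexity of the variance to bound $\mathop{\rm Var}\nolimits_r(f)$ by $\mathop{\rm Var}\nolimits_{\mathcal{T}_\xi}(f)$ on the subtree $\mathcal{T}_\xi=\{z>\xi\}$ rooted at the ancestor $v$ of $\xi$. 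The crucial observation is that, on the conditioning event, the artificially unconstrained rate $\hat c_{v,\mathcal{T}_\xi}\equiv 1$ coincides with the \emph{genuine} constraint $c_{v,\mathcal{T}}=1$, because $\eta_\xi=0$ and $\xi$ is a neighbour of $v$; so applying Theorem~\ref{Fa-1f generale} once more to $(\mathcal{T}_\xi,v,\text{FA-1f})$ turns $E$ into a sum of honest Dirichlet terms $\mu^+\bigl(c_{x,\mathcal{T}}\mathop{\rm Var}\nolimits_x^+(f)\bigr)$ at the cost of one extra $\mathop{\rm gap}\nolimits({\mathbb Z},\text{East})^{-1}$ and one $\mu(\Omega^+)$, which accounts for the constants in the theorem. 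This ``self-improving'' reuse of the already-established gap bound on a random subtree is the idea your proposal is missing; with it, no path congestion estimate is needed.
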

\begin{proof}
  As in the proof of Theorem \ref{Fa-1f generale} we can safely assume
  that ${\mathcal G}$ is a tree ${\mathcal T}$ with root $r\in V$. We extend any $f:\Omega^+\mapsto
  {\mathbb R}$ to a function $\tilde f$ on $\Omega$ by setting $\tilde f(\eta_y=1\ \forall y)\equiv
  f(\eta_y=1\ \forall y\neq r,\ \eta_r=0)$. Using Theorem
  \ref{Fa-1f generale}, we then write
\begin{gather}
\mathop{\rm Var}\nolimits^+(f)=\mathop{\rm Var}\nolimits^+(\tilde f) \le \left(\mu(\Omega^+)\right)^{-2}\mathop{\rm Var}\nolimits(\tilde f)
\nonumber
\cr
\le \left(\mu(\Omega^+)\right)^{-2} \mathop{\rm gap}\nolimits({\mathcal T},r, \text{East})^{-1}\sum_{x}\mu\left(\hat
c_{x,{\mathcal T}} \mathop{\rm Var}\nolimits_x(\tilde f)\right) 
\label{C.1}
\end{gather}
where the constraints $\{\hat c_{x,{\mathcal T}}\}_{x\in {\mathcal T}}$ have been defined right after \eqref{FA1f-G}. 

Let us examine a generic term $\mu\left(\hat c_{x,{\mathcal T}} \mathop{\rm Var}\nolimits_x(\tilde f)\right)$ with
$x\neq r$. Remember that $\hat c_{x,{\mathcal T}}=c_{x,{\mathcal T}}$ and moreover $c_{x,{\mathcal T}}(\eta)=0$
if $\eta_y=1$ for all $y\neq x$. Furthermore, for any $\eta$ such that there
exists $y\neq x$ with $\eta_y=0$, $ \mu^+(\eta_x=1\thinspace | \thinspace\{\eta_y\}_{y\neq x})=p
$. In conclusion we have shown that
\begin{equation}
  \label{C.2}
 \mu\left(\hat c_{x,{\mathcal T}} \mathop{\rm Var}\nolimits_x(\tilde f)\right)=\mu(\Omega^+)\mu^+\left(c_{x,{\mathcal T}}
 \mathop{\rm Var}\nolimits^+_x(f)\right)\qquad \forall x\neq r
\end{equation}
We now examine the dangerous term $\mu\left(\hat c_{r,{\mathcal T}} \mathop{\rm Var}\nolimits_r(\tilde
f)\right)=\mu\left(\mathop{\rm Var}\nolimits_r(\tilde f)\right)$. Because of the definition of $\tilde f$
we can safely rewrite it as 
\begin{gather*}
\mu\left(\mathop{\rm Var}\nolimits_r(\tilde f)\right)=\mu\left(\chi_{\{\exists\, y\neq r:\,
  \eta_y=0\}}\mathop{\rm Var}\nolimits_r(f)\right)
\end{gather*}
Let us order the vertices of the tree ${\mathcal T}$ starting from the \emph{furthermost
ones} by first assigning some arbitrary order to all vertices belonging
to any given layer ($\equiv$ same distance from the root) and then
declaring $x<y$ iff either $d(x,r)>d(y,r)$ or $d(x,r)=d(y,r)$ and $x$
comes before $y$ in the order assigned to their layer. Next, for any
$\eta$ such that $\eta_y=0$ for some $y\neq r$, define $\xi=\min\{y:\
\eta_y=0\}$ and let ${\mathcal T}_\xi:=\{z\in {\mathcal T}:\ z>\xi\}$ (see Fig \ref{Tcsi}). 

\begin{figure}[ht]
\psfrag{1}{\tiny $1$}
\psfrag{2}{\tiny$2$}
\psfrag{3}{\tiny$3$}
\psfrag{4}{\tiny$4$}
\psfrag{5}{\tiny$5$}
\psfrag{6}{\tiny$6...$}
\psfrag{16}{\tiny$16$}
\psfrag{17}{\tiny$17$}
\psfrag{18}{\tiny$18$}
\psfrag{22}{\tiny$22$}
\psfrag{24}{\tiny$24$}
\psfrag{25}{\tiny$25$}
\psfrag{26}{\tiny$26$}
\psfrag{27}{\tiny$27$}
\psfrag{28}{\tiny$28$}
\psfrag{29}{\tiny$29$}
\psfrag{30}{\tiny$30$}
\psfrag{31}{\tiny$31$}
\psfrag{r}{$r$}
\psfrag{v}{$v$}
\psfrag{xi}{$\xi$}
\includegraphics[width=.99\columnwidth]{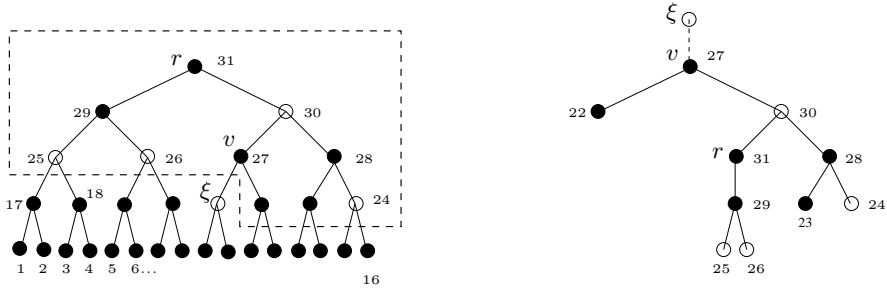}
\caption{An example of a tree $\mathcal{T}$ on the left, with a choice
of an ordering.
The dotted line delimitates the subtree $\mathcal{T}_\xi$ that we
reproduce on the right, with
root $v$.}
\label{Tcsi}
\end{figure}

Notice that the
subgraph ${\mathcal T}_\xi$ is again a tree and we define its root to be the
ancestor $v$ of $\xi$ in ${\mathcal T}$. Then, using convexity of the variance,
we can write
\begin{gather*}
  \mu\left(\chi_{\{\exists \, y\neq
    r:\,\eta_y=0\}}\mathop{\rm
    Var}\nolimits_r(f)\right)=\mu\left(\chi_{\xi\neq
    r}\mu\left(\mathop{\rm Var}\nolimits_r(f)\thinspace | \thinspace
  \xi\right)\right)\hfill\break
\le \mu\left(\chi_{\xi\neq r}\mathop{\rm Var}\nolimits_{{\mathcal T}_\xi}(f)\right)
\end{gather*}
In order to bound from above $\mathop{\rm Var}\nolimits_{{\mathcal T}_\xi}(f)$ we apply the Poincar\'e
inequality in ${\mathcal T}_\xi$ with constraints $\{\hat c_{z,{\mathcal T}_\xi}\}$ and root
$v$ together with
Theorem \ref{Fa-1f generale}:
$$
\mathop{\rm Var}\nolimits_{{\mathcal T}_\xi}(f)\le \mathop{\rm gap}\nolimits({\mathbb Z},\text{East})^{-1}
\sum_{z\in {\mathcal T}_\xi}\mu_{{\mathcal T}_\xi}\left( \hat c_{z,{\mathcal T}_\xi}\mathop{\rm Var}\nolimits_z(f)\right)
$$
Notice that, by construction, $\hat c_{z,{\mathcal T}_\xi}(\eta)=c_{z,{\mathcal T}}(\eta)$ for any
  $z\in {\mathcal T}_\xi$, including the root $v$ of ${\mathcal T}_\xi$ where
  $\hat c_{v,{\mathcal T}_\xi}(\eta)=1$ by definition and
    $c_{v,{\mathcal T}}(\eta)=1$ because $\eta_\xi=0$. 
Putting all together we conclude that
\begin{gather}
   \mu\left(\chi_{\{\exists \, y\neq
    r:\,\eta_y=0\}}\mathop{\rm Var}\nolimits_r(f)\right) \le \mathop{\rm gap}\nolimits({\mathbb Z},\text{East})^{-1}\sum_{x\in {\mathcal T}}\mu\left(\chi_{\{\exists\, y\neq r:\,
  \eta_y=0\}}c_{x,{\mathcal T}}\mathop{\rm Var}\nolimits_x(f) \right)
\nonumber \cr
 \le 
\mathop{\rm gap}\nolimits({\mathbb Z},\text{East})^{-1} \mu(\Omega^+)\sum_{x\in {\mathcal T}}\mu^+\left(c_{x,{\mathcal T}}\mathop{\rm Var}\nolimits^+_x(f) \right) 
\label{C.3}
\end{gather}
where we have used once more the observation before \eqref{C.2} to write 
$$
c_{x,{\mathcal T}}\mathop{\rm Var}\nolimits_x(f)=c_{x,{\mathcal T}}\mathop{\rm Var}\nolimits^+_x(f).
$$
If we now combine \eqref{C.1}, \eqref{C.2} and \eqref{C.3} together we
get
\begin{equation*}
\mathop{\rm Var}\nolimits^+(f) \le 2\left(\mathop{\rm gap}\nolimits({\mathbb Z},\text{East})\mu(\Omega^+)\right)^{-1}  \sum_{x\in {\mathcal T}}\mu^+\left(c_{x,{\mathcal T}}\mathop{\rm Var}\nolimits^+_x(f) \right)
\end{equation*}
and the proof is complete.
\qed \end{proof}
\begin{acknowledgement}
F. Martinelli would like to warmly thank Roman Koteck\'y for the very nice
invitation to lecture at the Prague Summer School on \emph{Mathematical
  Statistical Mechanics}.  We also acknowledge P.Sollich for useful discussions
and comments on the topics of this paper. 
\end{acknowledgement}

\bibliographystyle{amsplain}
\bibliography{ref}


\printindex
\end{document}